\documentclass{amsart}

\usepackage{amssymb}

\newtheorem{theorem}{Theorem}[section]
\newtheorem{lemma}[theorem]{Lemma}
\newtheorem{proposition}[theorem]{Proposition}
\newtheorem{corollary}[theorem]{Corollary}

\theoremstyle{remark}
\newtheorem{remark}[theorem]{Remark}

\begin{document}

\title{$\mathcal H$-Harmonic Bergman Projection on the Hyperbolic Ball}

\thanks{This research is supported by Eski\c{s}ehir Technical University
Research Fund under grant 20ADP085.}

\author{A. Ers$\dot{\hbox{\i}}$n \"Ureyen}
\address{Department of Mathematics, Faculty of Science,
Eki\c{s}ehir Technical University, 26470, Eski\c{s}ehir,
Turkey}
\email{aeureyen@eskisehir.edu.tr}

\date{\today}

\subjclass[2010]{Primary 47B34, 31C05; Secondary 33C05, 46E22}

\keywords{Hyperbolic Laplacian, hyperbolic harmonic function,
Bergman projection, $\mathcal H$-harmonic Bergman space,
$\mathcal H$-harmonic Bloch space, reproducing kernel}

\begin{abstract}
We determine precisely when the Bergman projection $P_\beta$
is bound\-ed from Lebesgue spaces $L^p_\alpha$ to weighted
Bergman spaces $\mathcal B^p_\alpha$ of $\mathcal H$-harmonic
functions on the hyperbolic ball,
and verify a recent conjecture of M. Stoll.
We obtain upper estimates for the reproducing kernel of the
$\mathcal H$-harmonic Bergman space $\mathcal B^2_\alpha$ and its
partial derivatives.
We also consider the projection from $L^\infty$ to the Bloch
space $\mathcal B$ of $\mathcal H$-harmonic functions.
\end{abstract}

\maketitle

\section{Introduction}\label{SI}

We follow the notation and terminology of the book \cite{St1} which we
refer the reader for more details.
For $n\geq 2$, let $\langle\cdot,\cdot\rangle$ be the Euclidean
inner product and $|\cdot|$ the corresponding norm in $\mathbb R^n$.
Let $\mathbb B=\{x\in\mathbb R^n\,:\,|x|<1\}$ be the unit ball and
$\mathbb S=\partial\mathbb B$ the unit sphere.

The hyperbolic ball is $\mathbb B$ equipped with the hyperbolic metric
$$ds=\frac{2}{1-|x|^2}\,|dx|.$$
For $f\in C^2(\mathbb B)$, the Laplace-Beltrami operator associated
with the hyperbolic metric (the invariant Laplacian) is given by
(up to a constant factor)
$$\Delta_hf(x)=(1-|x|^2)\bigl[(1-|x|^2)\Delta f(x)+2(n-2)\langle x,\nabla f(x)\rangle\bigr],$$
where $\Delta f=\tfrac{\partial^2f}{\partial x_1^2}+\dots+\tfrac{\partial^2f}{\partial x_n^2}$ and
$\nabla f=\bigl(\tfrac{\partial f}{\partial x_1},\dots,\tfrac{\partial f}{\partial x_n}\bigr)$
are the Euclidean Laplacian and gradient.
It is straightforward to show that $\Delta_hf(a)=\Delta(f\circ\varphi_a)(0)$,
where
$$\varphi_a(x)=\frac{a|x-a|^2+(1-|a|^2)(a-x)}{[x,a]^2},$$
is a M\"obius transformation mapping $\mathbb B$ to $\mathbb B$
and exchanging $a$ and $0$.
Here $[x,a]$ is defined as
$$[x,a]:=\sqrt{1-2\langle x,a\rangle+|x|^2|a|^2}\qquad (x,a\in\mathbb B).$$

A complex-valued function $f\in C^2(\mathbb B)$ is called hyperbolic harmonic
or $\mathcal H$-harmonic (sometimes also called $\mathcal M$-harmonic)
on $\mathbb B$, if $\Delta_hf(x)=0$ for all $x\in\mathbb B$.
We denote the space of all $\mathcal H$-harmonic functions by
$\mathcal H(\mathbb B)$.
In case $n=2$, $\mathcal H$-harmonic functions coincide with Euclidean
harmonic functions for which all the results mentioned below are well-known.
Therefore, from now on we assume $n\geq 3$.

Let $\nu$ be the Lebesgue measure on $\mathbb B$ normalized so that
$\nu(\mathbb B)=1$.
For $\alpha>-1$, define the weighted measures
$$d\nu_\alpha(x)=\frac{1}{V_\alpha}(1-|x|^2)^\alpha\,d\nu(x),$$
where the normalizing constant
\begin{equation}\label{Valpha}
V_\alpha=\frac{n}{2}B\Bigl(\frac{n}{2},\alpha+1\Bigr)
\end{equation}
so that $\nu_\alpha(\mathbb B)=1$.
For $0<p<\infty$ and $\alpha>-1$, the Lebesgue space
$L^p_\alpha=L^p(\mathbb B,d\nu_\alpha)$ consists of all measurable functions $f$
on $\mathbb B$ such that
$$\|f\|_{p,\alpha}=\biggl(\int_\mathbb B |f(x)|^p\,d\nu_\alpha(x)\biggr)^{1/p}<\infty.$$
The subspace $\mathcal B^p_\alpha$ consisting of $\mathcal H$-harmonic
functions is called the $\mathcal H$-harmonic weighted Bergman space,
$$\mathcal B^p_\alpha=\{f\in\mathcal H(\mathbb B)\,:\,\|f\|_{p,\alpha}<\infty\}.$$

When $1\leq p<\infty$, $\mathcal B^p_\alpha$ is a Banach space on which
point evaluation functionals $f \mapsto f(x)$ are bounded.
In particular, $\mathcal B^2_\alpha$ is a reproducing kernel Hilbert space
and for each $x\in\mathbb B$, there exists a unique
$\mathcal R_\alpha(x,\cdot)\in\mathcal B^2_\alpha$ such that
\begin{equation}\label{Reproduce}
f(x)=\int_\mathbb B f(y)\overline{\mathcal R_\alpha(x,y)}\,d\nu_\alpha(y)
\qquad (f\in\mathcal B^2_\alpha).
\end{equation}
The reproducing kernel $\mathcal R_\alpha(\cdot,\cdot)$ is real-valued
(so the conjugation in \eqref{Reproduce} can be removed),
symmetric in its variables and is $\mathcal H$-harmonic with respect to each variable.

For $\alpha>-1$, the Bergman projection operator $P_\alpha$ is defined by
$$P_\alpha f(x)=\int_\mathbb B \mathcal R_\alpha(x,y)f(y)\,d\nu_\alpha(y)$$
for suitable $f$.
It follows from \eqref{Reproduce} that $P_\alpha$ is the orthogonal projection of
$L^2_\alpha$ onto $\mathcal B^2_\alpha$.

The main purpose of this paper is to determine exactly when the operators
$P_\beta:L^p_\alpha\to\mathcal B^p_\alpha$ are bounded.

\begin{theorem}\label{Tprojection}
Let $1\leq p<\infty$ and $\alpha,\beta>-1$.
Then $P_\beta:L^p_\alpha\to\mathcal B^p_\alpha$ is bounded if and only if
\begin{equation}\label{Projcon}
 \alpha+1<p(\beta+1).
\end{equation}
In case \eqref{Projcon} holds, $P_\beta$ restricted to $\mathcal B^p_\alpha$
is identity.
\end{theorem}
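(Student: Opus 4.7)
My plan is to prove the three assertions of the theorem---sufficiency of \eqref{Projcon}, necessity, and the identity on $\mathcal B^p_\alpha$---in that order, along the lines of the classical Forelli-Rudin strategy adapted to the $\mathcal H$-harmonic setting.

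For sufficiency, rewriting the projection in terms of the measure $\nu_\alpha$ gives
$$P_\beta f(x) = \frac{V_\alpha}{V_\beta}\int_{\mathbb B}\mathcal R_\beta(x,y)(1-|y|^2)^{\beta-\alpha}f(y)\,d\nu_\alpha(y),$$
so the question becomes $L^p_\alpha$-boundedness of an integral operator with kernel $\mathcal R_\beta(x,y)(1-|y|^2)^{\beta-\alpha}$. For $1<p<\infty$ the natural tool is Schur's test with the power weight $h(z)=(1-|z|^2)^{-s}$. Both Schur inequalities reduce, via Forelli-Rudin type estimates on
$$\int_{\mathbb B}|\mathcal R_\beta(x,y)|(1-|y|^2)^{t}\,d\nu(y),$$
to a pair of linear constraints on the exponent $s$; elementary algebra shows these are simultaneously solvable if and only if \eqref{Projcon} holds. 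The case $p=1$ is dispatched by Fubini together with the same kernel integral. That $P_\beta f$ is $\mathcal H$-harmonic follows by differentiating under the integral sign, justified by the same estimates. The main technical obstacle is the sharp evaluation of those kernel integrals; since the paper's abstract promises upper estimates on $\mathcal R_\beta$ and its partial derivatives as a separate result, I would treat these as a black box here.

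For necessity, I would test on the one-parameter family $f_c(y)=(1-|y|^2)^{-c}$, which belongs to $L^p_\alpha$ precisely when $c<(\alpha+1)/p$. Using the hypergeometric representation of $\mathcal R_\beta$, one verifies that $P_\beta f_c(x)$ is well-defined and asymptotic to a nonzero constant multiple of $(1-|x|^2)^{-c}$ near $\mathbb S$ as long as $c<\beta+1$, while the defining integral diverges once $c\geq\beta+1$. If $(\alpha+1)/p>\beta+1$, one may choose $c\in(\beta+1,(\alpha+1)/p)$: then $f_c\in L^p_\alpha$ while $P_\beta f_c$ is not even well-defined, contradicting boundedness. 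The boundary case $(\alpha+1)/p=\beta+1$ is ruled out by letting $c\uparrow\beta+1$ and observing that the constant in the asymptotic carries a gamma factor that blows up, making the ratio $\|P_\beta f_c\|_{p,\alpha}/\|f_c\|_{p,\alpha}$ unbounded.

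For the identity on $\mathcal B^p_\alpha$, the reproducing property \eqref{Reproduce} already gives $P_\beta f=f$ whenever $f\in\mathcal B^2_\beta$. Given a general $f\in\mathcal B^p_\alpha$, I would approximate $f$ in $L^p_\alpha$ by partial sums of its spherical $\mathcal H$-harmonic expansion; each partial sum is a bounded $\mathcal H$-harmonic function, hence lies in $\mathcal B^2_\beta\cap\mathcal B^p_\alpha$ and is therefore fixed by $P_\beta$. The continuity of $P_\beta$ on $L^p_\alpha$ established in the first part transfers the fixed-point property from the approximants to $f$, completing the proof.
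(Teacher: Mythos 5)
Your sufficiency argument is essentially the paper's: rewrite $P_\beta$ against $d\nu_\alpha$, replace $\mathcal R_\beta$ by the upper bound $[x,y]^{-(\beta+n)}$, and apply Schur's test with a power weight (with Fubini handling $p=1$). The Forelli--Rudin estimate you invoke is exactly the paper's Lemma \ref{LIntBracket}, and your ``pair of linear constraints'' on $s$ amounts to the specific choice $s=(\alpha+1)/(pq)$ that the paper makes. So far so good.

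The necessity argument, however, has a genuine gap: testing against radial functions $f_c(y)=(1-|y|^2)^{-c}$ does \emph{not} produce the asymptotic $(1-|x|^2)^{-c}$ in the $\mathcal H$-harmonic (or even Euclidean harmonic) setting. Because $\mathcal R_\beta(x,\cdot)$ is $\mathcal H$-harmonic and radial measures integrate against the mean-value property, one has
$$P_\beta f_c(x)=\frac{1}{V_\beta}\int_{\mathbb B}\mathcal R_\beta(x,y)(1-|y|^2)^{\beta-c}\,d\nu(y)
=\frac{n}{V_\beta}\int_0^1 r^{n-1}(1-r^2)^{\beta-c}\,\mathcal R_\beta(x,0)\,dr,$$
which is a \emph{constant} (independent of $x$) whenever $\beta-c>-1$. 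This is exactly the content of the paper's Lemma \ref{TestInt}. The asymptotic $(1-|x|^2)^{-c}$ you describe is a phenomenon of the \emph{holomorphic} Bergman kernel $(1-\langle z,w\rangle)^{-s}$, which is not pluriharmonic in $w$, and it does not transfer to the harmonic kernels. Consequently neither your choice of $c\in(\beta+1,(\alpha+1)/p)$ nor your limiting argument at $c\uparrow\beta+1$ detects anything: $P_\beta f_c$ stays uniformly bounded. The paper circumvents this by passing to the \emph{adjoint}: $P_\beta^*f(x)=(V_\beta/V_\alpha)(1-|x|^2)^{\beta-\alpha}\int\mathcal R_\beta(x,y)f(y)\,d\nu_\alpha(y)$. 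Feeding $f\equiv 1$ into $P_\beta^*$ and invoking the same mean-value identity gives $P_\beta^* 1(x)=C(1-|x|^2)^{\beta-\alpha}$, and membership of this in $L^q_\alpha$ is the condition \eqref{Projcon}. For $p=1$ the borderline $\beta=\alpha$ must still be excluded, which the paper does with the test function $f_{x_0}(y)=\mathcal R_\beta(x_0,y)/|\mathcal R_\beta(x_0,y)|$ together with the two-sided integral estimate of Theorem \ref{TIntegral}; this step also requires the lower bound on $\mathcal R_\beta$ from Proposition \ref{Plower}, so you cannot get the full necessity from the upper estimates alone.

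On the identity $P_\beta|_{\mathcal B^p_\alpha}=\mathrm{id}$: your approximation-by-partial-sums route needs the (nontrivial) fact that the $\mathcal H$-harmonic expansion partial sums converge to $f$ in $L^p_\alpha$, which you do not justify and which is not immediate for $p\neq 2$. The paper's route is more direct: under \eqref{Projcon}, $L^p_\alpha\subset L^1_\beta$ (H\"older), hence $\mathcal B^p_\alpha\subset\mathcal B^1_\beta$, and one proves a reproducing formula directly on $\mathcal B^1_\beta$ (Lemma \ref{LRepB1}) by integrating the series expansion \eqref{seriesH} term by term. I would replace your approximation argument by this inclusion argument.
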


This verifies a recent conjecture of M. Stoll (see \cite[Section 6]{St2}).
Actually, when the dimension $n$ is \textit{even}, the sufficiency part of
this theorem has already been proved in \cite[Theorem IV.14]{Pen}.
The difference between even and odd dimensions and the reason why the
problem is more difficult in odd dimensions will be explained below.
We remark that Theorem \ref{Tprojection} is true for all $n\geq 3$
regardless of the parity of the dimension and
shows that the condition \eqref{Projcon} is both necessary and sufficient.

To prove Theorem \ref{Tprojection} we need upper and lower estimates of
the kernel $\mathcal R_\alpha$.
This is a nontrivial problem since there is no known closed formula
for $\mathcal R_\alpha$.
However, it is known that it has the series expansion (\cite[Corollary III.5]{Pen},
\cite[Theorem 5.3]{St2})
\begin{equation}\label{RepKer}
\mathcal R_\alpha(x,y)=\sum_{m=0}^\infty c_m(\alpha)S_m(|x|)S_m(|y|)\,Z_m(x,y),
\end{equation}
where
\begin{equation}\label{DefineSmr}
S_m(r)
=\frac{F(m,1-\tfrac{n}{2};m+\tfrac{n}{2};r^2)}{F(m,1-\tfrac{n}{2};m+\tfrac{n}{2};1)},
\end{equation}
with $F={}_2F_1$ is the Gauss hypergeometric function, $Z_m$ is the zonal harmonic of
degree $m$ and the coefficients $c_m(\alpha)$ are given by
\begin{equation}\label{cm}
\frac{1}{c_m(\alpha)}=\frac{1}{V_\alpha}\,
n\int_0^1 r^{2m+n-1}(1-r^2)^\alpha\,S_m^2(r)\,dr=:I_m.
\end{equation}

We compare this with the Euclidean harmonic case.
Harmonic weighted Bergman space $b^2_\alpha$ has the reproducing kernel
(see \cite[p.~164]{DS}, \cite[Proposition 3]{Mia})
\begin{equation}\label{EucKer}
R_\alpha(x,y)=\sum_{m=0}^\infty\gamma_m(\alpha)\,Z_m(x,y),
\end{equation}
where
\begin{equation}\label{Euclgamma}
\frac{1}{\gamma_m(\alpha)}
=\frac{1}{V_\alpha}\,n\int_0^1 r^{2m+n-1}(1-r^2)^\alpha\,dr
=\frac{\Gamma(\alpha+\tfrac{n}{2}+1)\Gamma(m+\tfrac{n}{2})}
{\Gamma(\tfrac{n}{2})\Gamma(m+\alpha+\tfrac{n}{2}+1)}.
\end{equation}
Note that formulas may differ by a constant factor
depending on whether the normalizing constant $V_\alpha$ is used or not.
Upper and lower estimates of the Euclidean Bergman kernel $R_\alpha$ and its
derivatives are by now well-established.
If we compare \eqref{RepKer} and \eqref{EucKer}, we see that in \eqref{RepKer}
there appears the extra factor $S_m$.
However it is not difficult to deal with this extra term and
the main difficulty lies in the coefficient $c_m(\alpha)$.
Although the integral in \eqref{Euclgamma} is easily evaluated,
a closed formula for the hypergeometric integral in \eqref{cm} is not known.

We remark that in \textit{even} dimensions the hypergeometric function in
\eqref{DefineSmr} is just a polynomial since $1-\tfrac{n}{2}$ is a negative integer
and the hypergeometric series terminates.
For this reason it is easier to work in even dimensions and when the dimension is
also small the integral in \eqref{cm} can be explicitly evaluated and a closed
formula for $c_m(\alpha)$ can be written.
We will not use this fact and all our results and methods below will be independent
of dimension and will equally work in both even and odd dimensions.

It is shown in \cite[Theorem III.6]{Pen} that
\begin{equation}\label{gammamAsym}
c_m(\alpha)\sim m^{\alpha+1}\qquad (m\to\infty).
\end{equation}
We will show later that the following stronger estimate is true
\begin{equation}\label{Kis1}
c_m(\alpha)=\frac{\Gamma(m+\alpha+n)}{\Gamma(m+n-1)}
\biggl(D_0+O\Bigl(\frac{1}{m}\Bigr)\biggr)\qquad (m\to\infty),
\end{equation}
where the constant $D_0=D_{0}(\alpha,n)$ is given by
$$D_0
=\frac{\Gamma^2(n-1)\Gamma(\alpha+n+1)\Gamma(\tfrac{\alpha}{2}+1)\Gamma(\tfrac{\alpha}{2}+n)}
{\Gamma(\tfrac{n}{2})\Gamma(\alpha+\tfrac{n}{2}+1)\Gamma(\alpha+2n-1)
\Gamma(\tfrac{\alpha+n}{2})\Gamma(\tfrac{\alpha+n}{2}+1)}.$$
However, neither of these are sufficient to estimate $\mathcal R_\alpha$.
Our first aim is to show that $1/c_m(\alpha)=I_m$ is given by the following series.

\begin{theorem}\label{TIm}
There exist constants $A_k$ $(k=0,1,2,\dots)$
depending only on $\alpha$ and $n$ such that for $m\geq 1$,
\begin{equation}\label{ImSeries}
I_m=\frac{\Gamma(m+n-1)}{\Gamma(m+\alpha+n)}
\sum_{k=0}^\infty\frac{A_k}{(m+\alpha+n)_k}.
\end{equation}
The exact value of $A_k$ is given by
\begin{align}\label{Ak}
\begin{split}
A_k=&\frac{\Gamma(\tfrac{n}{2})\Gamma(\alpha+n)\Gamma(\alpha+2n-1)}
{\Gamma^2(n-1)\Gamma(\alpha+\tfrac{3n}{2})}\,
\frac{(\alpha+1)_k\,(\alpha+n)_k\,(1-\tfrac{n}{2})_k}
{(\alpha+\tfrac{n}{2}+1)_k\, k!}\\
&\times
{}_3F_2\left[
\begin{aligned}
& \tfrac{n}{2},\,\alpha+n,\,1-\tfrac{n}{2}\\
& \alpha+\tfrac{n}{2}+1+k,\,\alpha+\tfrac{3n}{2}
\end{aligned}\,;
1\right].
\end{split}
\end{align}
\end{theorem}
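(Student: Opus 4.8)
The plan is to turn the hypergeometric integral \eqref{cm} into a Beta‑type integral and then evaluate it by applying Euler integral representations and standard hypergeometric integrals iteratively. First I would substitute $t=r^2$ in \eqref{cm} and evaluate the normalizing constant in \eqref{DefineSmr} by Gauss's summation theorem,
$$F\bigl(m,1-\tfrac n2;m+\tfrac n2;1\bigr)=\frac{\Gamma(n-1)\,\Gamma(m+\tfrac n2)}{\Gamma(\tfrac n2)\,\Gamma(m+n-1)},$$
which is legitimate since $n\geq 3$ makes the Gauss parameter $c-a-b=n-1$ positive. This rewrites
$$I_m=\frac{n\,\Gamma^2(\tfrac n2)}{2V_\alpha\,\Gamma^2(n-1)}\cdot\frac{\Gamma^2(m+n-1)}{\Gamma^2(m+\tfrac n2)}\int_0^1 t^{m+\frac n2-1}(1-t)^\alpha\bigl[F\bigl(m,1-\tfrac n2;m+\tfrac n2;t\bigr)\bigr]^2\,dt.$$

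Next I would represent one of the two factors via its Euler integral, $F(m,1-\tfrac n2;m+\tfrac n2;t)=\dfrac{\Gamma(m+\frac n2)}{\Gamma(m)\Gamma(\frac n2)}\displaystyle\int_0^1 s^{m-1}(1-s)^{\frac n2-1}(1-st)^{\frac n2-1}\,ds$, and interchange the order of integration (justified by absolute convergence; the $t=1$ power series of the remaining factor converges because $n\geq 3$). Expanding that remaining factor as $\sum_q\frac{(m)_q(1-\frac n2)_q}{(m+\frac n2)_q\,q!}t^q$ and integrating termwise in $t$ produces a Beta function times ${}_2F_1(1-\tfrac n2,m+\tfrac n2+q;m+\tfrac n2+q+\alpha+1;s)$; integrating these against $s^{m-1}(1-s)^{\frac n2-1}$ by the formula $\int_0^1 s^{\sigma-1}(1-s)^{\rho-1}{}_2F_1(a,b;c;s)\,ds=B(\sigma,\rho)\,{}_3F_2(a,b,\sigma;c,\sigma+\rho;1)$ collapses the outer normalization, since $\frac{\Gamma(m+\frac n2)}{\Gamma(m)\Gamma(\frac n2)}B(m,\tfrac n2)=1$. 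One is left with
$$\int_0^1 t^{m+\frac n2-1}(1-t)^\alpha\bigl[F(m,1-\tfrac n2;m+\tfrac n2;t)\bigr]^2 dt=\sum_{q=0}^\infty\frac{(m)_q(1-\frac n2)_q}{(m+\frac n2)_q\,q!}\,B\bigl(m+\tfrac n2+q,\alpha+1\bigr)\,{}_3F_2\bigl(1-\tfrac n2,m+\tfrac n2+q,m;\,m+\tfrac n2+q+\alpha+1,\,m+\tfrac n2;\,1\bigr).$$

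The remaining step — and the one I expect to be the main obstacle — is a resummation: write the intermediate ${}_3F_2$ as a series over a second index $j$, so that the right‑hand side becomes a double series in $(q,j)$; collect all the Gamma factors whose argument is shifted from $m$, use the shift identities for the Gamma function (and, in even dimensions, the fact that $(1-\tfrac n2)_q$ terminates the sums) to convert the ratios $\Gamma(m+\tfrac n2+\cdots)/\Gamma(m+\alpha+n+\cdots)$ into the Pochhammer symbols $(m+\alpha+n)_k$; and re‑index the double series by a single parameter $k$. After restoring the prefactor $\Gamma^2(m+n-1)/\Gamma^2(m+\tfrac n2)$, everything should organize into $\dfrac{\Gamma(m+n-1)}{\Gamma(m+\alpha+n)}\sum_k\dfrac{A_k}{(m+\alpha+n)_k}$, with $A_k$ equal to a $k$‑independent constant times $\dfrac{(\alpha+1)_k(\alpha+n)_k(1-\frac n2)_k}{(\alpha+\frac n2+1)_k\,k!}$ times precisely the ${}_3F_2$ appearing in \eqref{Ak}, the $k$‑independent constant being read off as $\Gamma(\tfrac n2)\Gamma(\alpha+n)\Gamma(\alpha+2n-1)/(\Gamma^2(n-1)\Gamma(\alpha+\tfrac{3n}{2}))$. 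The delicate points are disentangling the three $m$‑growing parameters of the intermediate ${}_3F_2$, carrying out the re‑indexing so that the half‑integer shifts present in odd dimensions cancel, and verifying by Stirling's formula that the resulting series in $k$ converges for every $m\geq 1$. As a consistency check that also yields \eqref{Kis1}: for $k=0$ the lower parameters of the ${}_3F_2$ in \eqref{Ak} satisfy $(\alpha+\tfrac n2+1)+(\alpha+\tfrac{3n}{2})=2(\alpha+n)+1$, so Whipple's ${}_3F_2$ summation theorem evaluates $A_0$ in closed form and gives $D_0=1/A_0$, while for $k\geq 1$ this balance fails, consistent with $A_k$ not being elementary.
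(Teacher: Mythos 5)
Your setup is fine as far as it goes, but the argument stops exactly where the theorem actually lives. After your two Euler-integral steps you arrive at a double series whose $(q,j)$ term carries the $m$-dependence
$$\frac{\Gamma^2(m+n-1)\,\Gamma(m+q)\,\Gamma(m+j)\,\Gamma(m+\tfrac n2+q+j)}
{\Gamma^2(m)\,\Gamma(m+\tfrac n2+q)\,\Gamma(m+\tfrac n2+j)\,\Gamma(m+\tfrac n2+\alpha+1+q+j)},$$
and no amount of Gamma shift identities or re-indexing will turn this, term by term, into $\frac{\Gamma(m+n-1)}{\Gamma(m+\alpha+n)}\cdot\frac{1}{(m+\alpha+n)_k}$ with an $m$-independent coefficient: each term is a genuinely transcendental function of $m$, so you would have to expand every term in an inverse factorial series in $m$ and then rearrange a triple series whose terms are not all of one sign in odd dimensions (the factors $(1-\tfrac n2)_q$, $(1-\tfrac n2)_j$ alternate eventually). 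That resummation, with its convergence and rearrangement justification, is the entire content of the theorem, and your proposal defers it with ``should organize into''; as written it is a gap, not a proof. The difficulty you flag yourself — the intermediate ${}_3F_2$ at unit argument with three parameters growing like $m$ — is precisely the obstruction, and it is not removable by elementary bookkeeping.

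The paper's proof is built to avoid this situation: before expanding anything in series it applies the Euler transformation to one factor, so that the Euler integral used is the one for $F(\tfrac n2,m+n-1;m+\tfrac n2;r)$ (integration parameter $\tfrac n2$, not $m$), then evaluates the $r$-integral by the identity $\int_0^1 t^{c-1}(1-t)^{\rho-1}(1-zt)^{-\sigma}F(a,b;c;t)\,dt$ expressed as a ${}_3F_2$ of argument $z/(z-1)$, and finally replaces that ${}_3F_2$ by its double-integral representation. After these steps $m$ appears in the integrand only through the single factor $u^{m+n-2}$, so the subsequent Beta-integral evaluation produces $\Gamma(m+n-1)/\Gamma(m+\alpha+n+k)$ directly, with manifestly $m$-independent coefficients; Kummer's transformation is used only at the end to put $A_k$ in the stated form. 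If you want to salvage your route, you must either carry out the inverse-factorial expansion and justify the triple rearrangement, or reorganize the calculation (as the paper does) so that the $m$-dependence is isolated into one monomial before any series expansion. Your closing consistency check for $k=0$ is correct (the ${}_3F_2$ in \eqref{Ak} is summable there — the paper uses Dixon's well-poised theorem, and Whipple's theorem applies as well since the upper parameters contain the pair $\tfrac n2$, $1-\tfrac n2$), but it does not repair the missing central step.
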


The formula for $A_k$ is complicated but these exact values will not be important
for the size estimates of $\mathcal R_\alpha$.
What matters only is the form of the series in \eqref{ImSeries} which, in fact,
is a convergent asymptotic series since $(m+\alpha+n)_k$ is a polynomial
in $m$ of degree $k$.
From this the asymptotic expansion of $c_m(\alpha)$ will easily follow
(see Corollary \ref{CAsymD} below) which will lead to the following upper estimate of
$\mathcal R_\alpha$.

\begin{theorem}\label{TEstBergman}
Let $\alpha>-1$.
There exists a constant $C>0$ depending only on $\alpha$ and $n$
such that for all $x,y\in\mathbb B$,
$$|\mathcal R_\alpha(x,y)|\leq \dfrac{C}{[x,y]^{\alpha+n}}.$$
Moreover, the exponent $(\alpha+n)$ is best possible.
\end{theorem}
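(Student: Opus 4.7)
My plan is to convert the information about $c_m(\alpha)$ supplied by Theorem~\ref{TIm} into an asymptotic expansion in descending powers of $m$, couple it with an integral representation of $S_m$, and reduce to the well-known estimate for the Euclidean harmonic Bergman kernel. Inverting the convergent asymptotic series in \eqref{ImSeries} gives, for any $N\ge 0$,
$$c_m(\alpha)=\frac{\Gamma(m+\alpha+n)}{\Gamma(m+n-1)}\biggl(D_0+\sum_{k=1}^N\frac{D_k}{(m+\alpha+n)_k}\biggr)+O\!\biggl(\frac{\Gamma(m+\alpha+n)}{\Gamma(m+n-1)\,m^{N+1}}\biggr),$$
while applying Euler's integral formula to \eqref{DefineSmr} together with Gauss's evaluation of the denominator yields, for $m\ge 1$,
$$S_m(r)=\frac{\Gamma(m+n-1)}{\Gamma(m)\,\Gamma(n-1)}\int_0^1 t^{m-1}(1-t)^{n/2-1}(1-r^2 t)^{n/2-1}\,dt,$$
from which one reads off the two-regime bound $S_m(r)\le C_n\bigl(1+m(1-r^2)\bigr)^{n/2-1}$ and the asymptotic $S_m(r)\sim c_n\,m^{n/2-1}(1-r^2)^{n/2-1}$ as $m(1-r^2)\to\infty$.

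For the upper estimate, substitute these expansions into \eqref{RepKer} and isolate the leading kernel
$$\mathcal K_0(x,y)=D_0\sum_{m=0}^\infty\frac{\Gamma(m+\alpha+n)}{\Gamma(m+n-1)}\,S_m(|x|)\,S_m(|y|)\,Z_m(x,y).$$
Extracting $(1-|x|^2)^{n/2-1}(1-|y|^2)^{n/2-1}$ from the dominant part of $S_m$ makes the remaining coefficients in $m$ comparable to $\gamma_m(\alpha+n-2)$ in \eqref{Euclgamma}, so the classical Euclidean bound $|R_\beta(x,y)|\le C/[x,y]^{\beta+n}$ applied with $\beta=\alpha+n-2$ produces
$$|\mathcal K_0(x,y)|\le C\,\frac{(1-|x|^2)^{n/2-1}(1-|y|^2)^{n/2-1}}{[x,y]^{\alpha+2n-2}}.$$
The elementary identity $[x,y]^2-(1-|x|^2)(1-|y|^2)=|x-y|^2\ge 0$ gives $(1-|x|^2)^{n/2-1}(1-|y|^2)^{n/2-1}\le[x,y]^{n-2}$, hence $|\mathcal K_0(x,y)|\le C/[x,y]^{\alpha+n}$. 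The lower-order pieces coming from $D_1,\dots,D_N$ have coefficients of order $m^{\alpha+1-k}$ and deliver bounds of the form $[x,y]^{-(\alpha+n-k)}$, which are absorbed since $[x,y]$ is bounded; the $O(\cdot)$ remainder is controlled in the same manner once $N$ is chosen nonnegative.

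For sharpness, note that on the diagonal $y=x$ every term in \eqref{RepKer} is nonnegative because $Z_m(x,x)\ge 0$. Restricting the sum to $m\gtrsim 1/(1-|x|^2)$ and using $c_m(\alpha)\asymp m^{\alpha+1}$, $S_m(|x|)^2\asymp m^{n-2}(1-|x|^2)^{n-2}$, and $Z_m(x,x)\asymp m^{n-2}|x|^{2m}$ gives $\mathcal R_\alpha(x,x)\gtrsim(1-|x|^2)^{-(\alpha+n)}$ as $|x|\to 1$; since $[x,x]=1-|x|^2$, this matches the upper bound and forbids any smaller exponent. The principal technical obstacle is the uniform handling of $S_m(r)$ in the transitional regime $m(1-r^2)\sim 1$, where neither asymptotic nor a constant bound is sharp; a dyadic-type decomposition of the sum over $m$ according to the sizes of $m(1-|x|^2)$ and $m(1-|y|^2)$, paired with the Beta-integral representation above, is what allows the comparison with $R_{\alpha+n-2}$ to be made rigorous uniformly on $\mathbb B\times\mathbb B$.
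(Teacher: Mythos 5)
Your proposal has the right ingredients---the asymptotic expansion of $c_m(\alpha)$, the Beta-integral representation of $S_m$, and a reduction to a Euclidean-type kernel estimate, together with the diagonal lower bound for sharpness---but the central step of the upper bound is left with a genuine gap, not merely a technical nuisance.

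The problem is the sentence claiming that ``extracting $(1-|x|^2)^{n/2-1}(1-|y|^2)^{n/2-1}$ from the dominant part of $S_m$'' reduces $\mathcal K_0$ to $R_{\alpha+n-2}$ up to a controlled error, to be repaired by ``a dyadic-type decomposition of the sum over $m$.'' For the kernel estimate $|\mathcal R_\alpha(x,y)|\lesssim [x,y]^{-(\alpha+n)}$ to be reached, the oscillation of $Z_m(x,y)$ in $m$ when $x$ and $y$ are not nearly aligned is essential: a term-by-term absolute-value estimate using $|Z_m(x,y)|\lesssim |x|^m|y|^m m^{n-2}$ only yields a bound of order $(1-|x||y|)^{-(\alpha+2n-2)}$, which is much weaker than the target off the diagonal. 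A dyadic decomposition of the $m$-sum naturally operates by bounding dyadic blocks in absolute value, which destroys exactly the cancellation you need. Your sketch never explains how the $[x,y]$-dependence (as opposed to the $(1-|x||y|)$-dependence) is to be recovered after the decomposition, and the factor $S_m(|x|)$ depends on $|x|$ in a way that is \emph{not} a constant multiple of $m^{n/2-1}(1-|x|^2)^{n/2-1}$ across all $m$, so there is no clean identity $\mathcal K_0=(1-|x|^2)^{n/2-1}(1-|y|^2)^{n/2-1}R_{\alpha+n-2}+\text{(harmless error)}$.

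The paper's route around this is Theorem~\ref{TEstSm}: substitute the Beta-integral representation \eqref{IntSmr} for \emph{both} $S_m(|x|)$ and $S_m(|y|)$, push the $m$-sum inside to get an inner kernel $W(tx,\tau y)$ of Euclidean type, bound that via derivative estimates of the Poisson kernel (Lemma~\ref{LPoisson} and Proposition~\ref{PEstW}, which keep track of the cancellation through $|x-\eta|$), and only then integrate in $t,\tau$, using \eqref{Ineq1}--\eqref{Ineq2} to trade $(1-t)^{n/2-1}(1-|x|^2 t)^{n/2-1}$ and its $\tau$-analogue against powers of $[tx,\tau y]$. This is the mechanism that produces the $n-2$ drop from $\alpha+n+2(n-1)$ down to $\alpha+n$; your inequality $(1-|x|^2)(1-|y|^2)\le [x,y]^2$ is the outermost shadow of it, but the reduction has to happen under the integral sign in $t,\tau$, where the $m$-sum is still coherent, not at the level of a dyadic splitting of $m$. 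As written, your argument would need to be reorganized around the integral representation of $S_m$ (roughly: never ``extract'' the asymptotics of $S_m$, always integrate) to close the gap. The sharpness argument is fine, though it is simpler to use $S_m(r)\ge1$ directly as the paper does rather than restricting to $m\gtrsim 1/(1-|x|^2)$ and invoking the asymptotics of $S_m$.
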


Comparing this with the Euclidean case, we see that
exactly same estimate holds for $R_\alpha$.
This leads, naturally, to the fact that the condition \eqref{Projcon}
is same as the Euclidean case.
As a matter of fact this condition is also same
for  Bergman spaces of \textit{holomorphic} functions, see \cite[Theorem 1.10]{HKZ}.

By a common technique (applying Schur's test) the upper estimate in
Theorem \ref{TEstBergman} proves the ``if" part of Theorem \ref{Tprojection}.
However to show the ``only if" part we need lower estimates of $\mathcal R_\alpha$.
To achieve this we first estimate first-order partial derivatives of $\mathcal R_\alpha$.

\begin{theorem}\label{TPartial}
There exists a constant $C>0$ depending only on $\alpha$ and $n$ such that
for all $x,y\in\mathbb B$ and $i=1,2,\dots,n$,
$$\Bigl|\frac{\partial}{\partial x_i}\mathcal R_\alpha(x,y)\Bigr|
\leq \frac{C}{[x,y]^{\alpha+n+1}}.$$
\end{theorem}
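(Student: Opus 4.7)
The plan is to prove Theorem \ref{TPartial} by differentiating the series \eqref{RepKer} term by term, paralleling the proof of Theorem \ref{TEstBergman}. The extra factor $[x,y]^{-1}$ will arise because differentiating the degree-$m$ homogeneous harmonic polynomial $Z_m(x,y)$ in $x$ (and the radial factor $S_m(|x|)$) contributes one extra power of $m$ per summand; in the resulting zonal-harmonic sum this $m$ translates into an additional power of $[x,y]^{-1}$.

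The first step is to justify
\[
\partial_{x_i}\mathcal R_\alpha(x,y)=\sum_{m=0}^\infty c_m(\alpha)\,S_m(|y|)\,\partial_{x_i}\bigl[S_m(|x|)Z_m(x,y)\bigr]
\]
on compact subsets of $\mathbb B\times\mathbb B$; this follows from $c_m(\alpha)\sim m^{\alpha+1}$ together with standard polynomial-in-$m$ bounds on $S_m,S_m',Z_m,\nabla_xZ_m$. Then, using Theorem \ref{TIm} (which gives a convergent series for $I_m=1/c_m(\alpha)$ whose leading coefficient $A_0$ is nonzero, as is needed for the asymptotic $c_m(\alpha)\sim m^{\alpha+1}$ to hold), formal inversion yields
\[
c_m(\alpha)=\frac{\Gamma(m+\alpha+n)}{\Gamma(m+n-1)}\sum_{k=0}^\infty\frac{B_k}{(m+\alpha+n)_k}
\]
for constants $B_k=B_k(\alpha,n)$ determined by the $A_k$. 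Plugging this into the differentiated series decomposes $\partial_{x_i}\mathcal R_\alpha(x,y)$ into a double sum over $m$ and $k$.

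For each fixed $k$, the inner $m$-sum has the shape of a differentiated Euclidean harmonic Bergman kernel (cf.\ \eqref{EucKer}) with weight parameter shifted by $-k$, modulated by the $S_m$ factors. By reducing to the closed-form Euclidean sums and using the uniform hypergeometric control on $S_m,S_m'$ developed for the proof of Theorem \ref{TEstBergman}, this $m$-sum admits a bound of the form $C_k[x,y]^{-(\alpha+n+1-k)}$. Using $[x,y]\le 2$ gives a uniform majorization $C_k 2^k[x,y]^{-(\alpha+n+1)}$; provided the $B_k$ decay fast enough that $\sum_k C_k 2^k<\infty$ (as should follow from the explicit form \eqref{Ak}), summing over $k$ yields $|\partial_{x_i}\mathcal R_\alpha(x,y)|\le C[x,y]^{-(\alpha+n+1)}$.

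The main obstacle is the bookkeeping forced by the product rule $\partial_{x_i}[S_m(|x|)Z_m(x,y)]=S_m'(|x|)\tfrac{x_i}{|x|}Z_m(x,y)+S_m(|x|)\partial_{x_i}Z_m(x,y)$, which produces an extra term $S_m'(|x|)\tfrac{x_i}{|x|}Z_m(x,y)$ with no Euclidean analogue. Both summands must be shown separately to admit the Euclidean-type majorization above with $k$-summable constants, which requires uniform-in-$m$ bounds on $S_m$ and $S_m'$ on $[0,1)$, extracted from the same hypergeometric identities used to prove Theorem \ref{TEstBergman}. A naive attempt via interior elliptic-regularity estimates for the Möbius-invariant operator $\Delta_h$ only yields $|\nabla u|\lesssim [x,y]^{-(\alpha+n)}/(1-|x|^2)$, which is strictly weaker than the claim when $[x,y]$ is much larger than $1-|x|^2$; this is why the series approach seems unavoidable.
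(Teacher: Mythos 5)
Your high-level strategy is aligned with the paper's: differentiate \eqref{RepKer} term by term, split by the product rule into the piece where the derivative hits $S_m(|x|)$ and the piece where it hits $Z_m(x,y)$, and convert the extra factor of $m$ from differentiation into an extra power of $[x,y]^{-1}$. However, there is a genuine gap in your treatment of $c_m(\alpha)$, and it is precisely the point the paper takes care to get right.

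You assert that inverting Theorem \ref{TIm} yields a \emph{convergent} expansion
$c_m(\alpha)=\frac{\Gamma(m+\alpha+n)}{\Gamma(m+n-1)}\sum_{k=0}^\infty\frac{B_k}{(m+\alpha+n)_k}$,
and then you sum the resulting $k$-indexed family of kernel estimates by majorizing the $k$-th term by $C_k 2^k[x,y]^{-(\alpha+n+1)}$, stipulating that $\sum_k C_k2^k<\infty$. Neither step is justified. Theorem \ref{TIm} gives a convergent series for $I_m=1/c_m(\alpha)$, but the formal reciprocal of a convergent series in the basis $\{1/(m+\alpha+n)_k\}_k$ is \emph{not} automatically a convergent series in that basis, and nothing in the paper or in your argument controls the growth of the inverted coefficients. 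The paper instead derives only an \emph{asymptotic} expansion for $c_m(\alpha)$ (Corollary~\ref{CAsymD}), truncated at a \emph{finite} order $K>\alpha+2n-2$ (respectively $K>\alpha+3n$ for the derivative case in Lemma~\ref{LPrep}) with a uniformly controlled remainder $\delta_m(K)=O(m^{-K})$. The main part is then a \emph{finite} sum over $k=0,\dots,K-1$, so there is no summability condition to verify, and the tail with coefficient $\delta_m(K)$ is absolutely summed directly using the crude polynomial bounds on $S_m$, $\partial_{x_i}Z_m$. Replacing your infinite $k$-sum by this finite-plus-remainder scheme is not a cosmetic change; without it, your argument is incomplete.

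A secondary point: your phrase ``uniform hypergeometric control on $S_m, S_m'$'' is too vague to carry the weight it needs. If this means the pointwise bounds $1\le S_m\le Cm^{n/2-1}$ and a corresponding bound on $S_m'$, then the two $S_m$-type factors contribute an extra $m^{n-2}$ (or worse) to the coefficient, destroying the claimed exponent. The paper handles this by substituting the \emph{integral representation} \eqref{IntSmr} for $S_m$ (and its $x$-derivative for the $T_1$ term), pulling the integrals in $t,\tau$ outside, and using the homogeneity identity $t^{m-1}\tau^{m-1}Z_m(x,y)=Z_m(tx,\tau y)/(t\tau)$ to absorb the $t^m,\tau^m$ factors back into the zonal harmonic; the $S_m$-free inner kernel is then estimated by Corollary~\ref{CEst} (packaged as Lemma~\ref{LPrep}), and the outer $(t,\tau)$-integral is finished off by \eqref{Ineq1}, \eqref{Ineq2} and Lemma~\ref{LInt01}. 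Your sketch does not make this maneuver explicit, and the $n=3$ case (where $(1-|x|^2t)^{n/2-2}$ has a negative exponent after differentiating $S_m$) requires a separate small inequality, which the paper flags.

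Finally, your closing remark — that interior elliptic regularity for $\Delta_h$ would only give $[x,y]^{-(\alpha+n)}/(1-|x|^2)$, which is weaker — is correct and is a good sanity check for why the series route is needed.
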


We remark that this theorem is true for all $n\geq 3$.
On the other hand, presumably, higher-order derivatives
of $\mathcal R_\alpha$ will be dimension dependent and will behave
differently in even and odd dimensions, see \cite{GJ} and \cite{Ja2} for
similar results in other contexts.

The required lower estimate will be obtained in Proposition \ref{Plower} below which will
lead to the following two-sided estimate of weighted integrals of $\mathcal R_\alpha$
which is of independent interest.

\begin{theorem}\label{TIntegral}
For $\alpha,\beta>-1$ and $0<p<\infty$, set $c=p(\alpha+n)-(\beta+n)$.
For all $x\in\mathbb B$,
$$\int_\mathbb B\bigl|\mathcal R_\alpha(x,y)\bigr|^{p}(1-|y|^2)^\beta\,d\nu(y)
\sim\begin{cases}
\dfrac{1}{(1-|x|^2)^{c}},&\text{if $c>0$};\\
1+\log\dfrac{1}{1-|x|^2},&\text{if $c=0$};\\
1,&\text{if $c<0$}.
\end{cases}$$
The implied constants depend only on $a,b,\alpha,n$ and are independent of $x$.
\end{theorem}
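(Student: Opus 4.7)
The plan is to bound the integral above and below separately, in both directions reducing to classical Forelli--Rudin asymptotics for integrals of $[x,y]^{-s}$ against $(1-|y|^2)^\beta\,d\nu(y)$ on $\mathbb B$.

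For the \emph{upper bound}, Theorem \ref{TEstBergman} immediately gives $|\mathcal R_\alpha(x,y)|^p\leq C\,[x,y]^{-p(\alpha+n)}$, so
$$\int_\mathbb B |\mathcal R_\alpha(x,y)|^p (1-|y|^2)^\beta\,d\nu(y)\leq C\int_\mathbb B \frac{(1-|y|^2)^\beta}{[x,y]^{p(\alpha+n)}}\,d\nu(y).$$
The right-hand integral is of Forelli--Rudin type on $\mathbb B$ with critical exponent $p(\alpha+n)-(\beta+n)=c$, and its standard three-case asymptotic in $|x|$ is precisely what the theorem claims.

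For the \emph{lower bound} in the principal regime $c>0$, I would apply Proposition \ref{Plower} to produce, for each $x$, a pseudohyperbolic disk $E(x)$ centered at $x$, of Euclidean volume $\sim (1-|x|^2)^n$, on which $|\mathcal R_\alpha(x,y)|\gtrsim (1-|x|^2)^{-(\alpha+n)}$ and $(1-|y|^2)\sim(1-|x|^2)$. Taking the $p$-th power and integrating against $(1-|y|^2)^\beta\,d\nu(y)$ over $E(x)$ yields exactly $(1-|x|^2)^{-c}$.

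For $c\leq 0$ one needs only a uniform positive lower bound. Since $Z_m(0,\cdot)\equiv 0$ for $m\geq 1$, the series \eqref{RepKer} collapses at $y=0$ to $\mathcal R_\alpha(x,0)=c_0(\alpha)>0$, independent of $x$. Applying Theorem \ref{TPartial} in the $y$-variable (via the symmetry $\mathcal R_\alpha(x,y)=\mathcal R_\alpha(y,x)$) together with the trivial bound $[x,y]\geq \tfrac12$ valid for $|y|\leq \tfrac12$ furnishes a uniform-in-$x$ gradient estimate on this Euclidean half-ball. Integrating along a radial segment from the origin then gives $|\mathcal R_\alpha(x,y)|\geq c_0(\alpha)/2$ for $|y|<\delta$ with $\delta>0$ independent of $x$, and integration over $\{|y|<\delta\}$ produces the desired constant lower bound. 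The extra $\log(1-|x|^2)^{-1}$ in the borderline case $c=0$ is then obtained by summing the $c>0$-style contribution over a dyadic chain of pseudohyperbolic disks strung along the radial segment $[0,x]$, each scale contributing order one and there being $\sim\log(1-|x|^2)^{-1}$ such scales.

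The main obstacle is clearly the pointwise lower estimate promised by Proposition \ref{Plower}: once a matched two-sided bound $|\mathcal R_\alpha(x,y)|\sim (1-|x|^2)^{-(\alpha+n)}$ is secured on a well-controlled near-diagonal set, the rest of the proof is a routine Forelli--Rudin integration together with the dyadic summation sketched above.
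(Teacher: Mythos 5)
Your proposal is correct but organizes the lower bound differently from the paper. Both arguments rely on exactly the same two key ingredients, Theorem \ref{TEstBergman} for the upper bound and Proposition \ref{Plower} for the pointwise lower bound, and both invoke the same Forelli--Rudin type asymptotics. The difference is entirely in how the lower bound is assembled from Proposition \ref{Plower}. The paper integrates the pointwise lower bound over the \emph{entire} nontangential cone $\Omega_s$, iterates the integral in the $y_1$ and $\hat y$ variables to reduce to the one-dimensional integral $\int_0^1(1-y_1)^{\beta+n-1}(1-ry_1)^{-p(\alpha+n)}\,dy_1$, and then obtains all three regimes $c>0$, $c=0$, $c<0$ in a single stroke from Lemma \ref{LInt01r}. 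You instead treat the three regimes separately: a near-diagonal pseudohyperbolic ball about $x$ (giving $(1-|x|^2)^{-c}$ for $c>0$), a fixed Euclidean ball about the origin with a gradient argument via Theorem \ref{TPartial} and symmetry (giving the constant bound for $c<0$, and for $|x|$ near $0$ generally), and a dyadic chain of pseudohyperbolic balls along the radius $[0,x]$ each contributing order one (giving the $\log$ for $c=0$). Your dyadic-chain approach is more geometric and has the pedagogical merit of making each regime's contribution visible, but it is somewhat longer, and your near-diagonal and dyadic constructions are in fact subsets of the cone $\Omega_s$ the paper integrates over all at once, so the paper's version subsumes and shortens yours. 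One small caveat on phrasing: Proposition \ref{Plower} alone does not deliver the matched \emph{two-sided} bound you describe at the end; it gives only the lower half of that bound, the upper half coming from Theorem \ref{TEstBergman}, but you in fact never use the upper half on the near-diagonal set, so this does not affect the argument.
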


The ``only if" part of Theorem \ref{Tprojection} will follow from this
theorem.

The projection theorem immediately leads to duality.
The proof of the following corollary is similar to the proof of
\cite[Theorem 1.16]{HKZ} and is omitted.

\begin{corollary}
Let $\alpha>-1$ and $1<p<\infty$.
The dual of $\mathcal B^p_\alpha$ can be identified with $\mathcal B^q_\alpha$
under the pairing
$$\langle f,g\rangle_\alpha=\int_\mathbb B f(x)\overline{g(x)}\,d\nu_\alpha(x),$$
where $1/p+1/q=1$.
\end{corollary}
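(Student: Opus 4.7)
The plan is to construct an isomorphism (up to equivalent norms) between $\mathcal{B}^q_\alpha$ and the dual of $\mathcal{B}^p_\alpha$ realised by the pairing $\langle \cdot, \cdot \rangle_\alpha$. The easy half is immediate: given $g \in \mathcal{B}^q_\alpha$, H\"older's inequality in $L^p_\alpha$ shows that $f \mapsto \langle f, g \rangle_\alpha$ is a bounded linear functional on $\mathcal{B}^p_\alpha$ with norm at most $\|g\|_{q,\alpha}$.

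For the converse, start with a bounded linear functional $L$ on $\mathcal{B}^p_\alpha$. Hahn--Banach extends $L$ to a functional $\tilde L$ on $L^p_\alpha$ of the same norm, and standard $L^p$--$L^q$ duality produces $h \in L^q_\alpha$ with $\|h\|_{q,\alpha} = \|\tilde L\|$ and $\tilde L(f) = \int_\mathbb{B} f \bar h \, d\nu_\alpha$ for every $f \in L^p_\alpha$. I would then set $g = P_\alpha h$. Since $q > 1$ gives $\alpha + 1 < q(\alpha+1)$, Theorem~\ref{Tprojection} guarantees $g \in \mathcal{B}^q_\alpha$ with $\|g\|_{q,\alpha} \leq C \|h\|_{q,\alpha}$, providing the norm control in the non-trivial direction. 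The crucial identity $L(f) = \langle f, g \rangle_\alpha$ for $f \in \mathcal{B}^p_\alpha$ would then be established by Fubini, the real-valuedness and symmetry of $\mathcal{R}_\alpha$, and the fact that $P_\alpha f = f$ on $\mathcal{B}^p_\alpha$ (also from Theorem~\ref{Tprojection}), so that
$$\int_\mathbb{B} f\,\overline{P_\alpha h}\,d\nu_\alpha = \int_\mathbb{B}(P_\alpha f)\,\bar h\,d\nu_\alpha = \int_\mathbb{B} f\,\bar h\,d\nu_\alpha = L(f).$$

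For injectivity of $g \mapsto \langle \cdot, g \rangle_\alpha$, I would test against the reproducing kernel itself: fixing $x \in \mathbb{B}$, the function $k_x(y) := \mathcal{R}_\alpha(y, x)$ is $\mathcal{H}$-harmonic by symmetry and lies in $\mathcal{B}^p_\alpha$ because Theorem~\ref{TIntegral} with $\beta = \alpha$ gives $c = (p-1)(\alpha + n) > 0$ and hence $\|k_x\|_{p,\alpha}^p \sim (1-|x|^2)^{-c} < \infty$; the reproducing property then forces $\langle k_x, g \rangle_\alpha = \overline{g(x)}$ for every $g \in \mathcal{B}^q_\alpha$, so the pairing separates points of $\mathcal{B}^q_\alpha$. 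I expect the main technical obstacle to be the Fubini justification in the previous step: one must verify that $(x,y) \mapsto f(x)\mathcal{R}_\alpha(x,y)h(y)$ is integrable against $d\nu_\alpha \otimes d\nu_\alpha$, which is handled by combining the upper bound of Theorem~\ref{TEstBergman} with a second application of Theorem~\ref{TIntegral} and H\"older, after which everything reduces to the mapping properties of $P_\alpha$ already secured by Theorem~\ref{Tprojection}.
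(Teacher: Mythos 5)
Your proposal is correct and follows essentially the same route the paper intends: the paper omits its proof, referring to the standard argument of Hedenmalm--Korenblum--Zhu (Theorem 1.16), and your argument is exactly that pattern — H\"older for the easy direction, then Hahn--Banach plus $L^p$--$L^q$ duality plus $P_\alpha$ (self-adjoint, identity on $\mathcal B^p_\alpha$, bounded on $L^q_\alpha$ since $q>1$ satisfies \eqref{Projcon}) for the converse, with the Fubini step controlled by Theorem~\ref{TEstBergman} and the Schur-test boundedness of $Q_\alpha$. Your injectivity check via $k_x\in\mathcal B^p_\alpha$ (from Theorem~\ref{TIntegral} with $c=(p-1)(\alpha+n)>0$) and Lemma~\ref{LRepB1} is also sound, so the proposal matches the intended proof.
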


Finally we consider the case $p=\infty$.
Denote the space of all essentially bounded functions on $\mathbb B$ by
$L^\infty=L^\infty(\mathbb B)$.
The gradient $\nabla^hf$ of the hyperbolic metric (the invariant gradient) is
given by
$$(\nabla^hf)(a)=-\nabla(f\circ\varphi_a)(0)=(1-|a|^2)\nabla f(a)\qquad (a\in\mathbb B),$$
for $f\in C^1(\mathbb B)$.
The $\mathcal H$-harmonic Bloch space $\mathcal B$ consists of all $f\in\mathcal H(\mathbb B)$
such that
$$\sup_{x\in\mathbb B}\bigl|\nabla^hf(x)\bigr|
=\sup_{x\in\mathbb B}(1-|x|^2)|\nabla f(x)|<\infty$$
with norm
$$\|f\|_{\mathcal B}=|f(0)|+\sup_{x\in\mathbb B}|\nabla^hf(x)|.$$

\begin{theorem}\label{TProjBloch}
For every $\alpha>-1$, $P_\alpha:L^\infty\to\mathcal B$ is bounded.
\end{theorem}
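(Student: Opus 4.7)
The plan is to verify the three ingredients of the Bloch norm for $P_\alpha f$ when $f\in L^\infty$: that $P_\alpha f$ is $\mathcal H$-harmonic on $\mathbb B$, that $|P_\alpha f(0)|\lesssim\|f\|_\infty$, and that $(1-|x|^2)|\nabla P_\alpha f(x)|\lesssim\|f\|_\infty$ uniformly in $x\in\mathbb B$. The first two points are quick consequences of Theorem \ref{TEstBergman}; the heart of the argument is the gradient estimate, which will follow from Theorem \ref{TPartial} combined with a standard Forelli--Rudin type integral estimate on $\mathbb B$.

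For the $\mathcal H$-harmonicity, I would use that $\mathcal R_\alpha(x,y)$ is $\mathcal H$-harmonic in $x$ together with the pointwise bound $|\mathcal R_\alpha(x,y)|\leq C[x,y]^{-(\alpha+n)}$ from Theorem \ref{TEstBergman} to justify passing $\Delta_h$ under the integral sign, once one notes that $[x,y]$ stays bounded below uniformly as $x$ ranges over a compact subset of $\mathbb B$. Evaluating at $x=0$ and using $[0,y]=1$ gives $|\mathcal R_\alpha(0,y)|\leq C$, so that $|P_\alpha f(0)|\leq C\|f\|_\infty$ since $\nu_\alpha$ is a probability measure.

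For the gradient, Theorem \ref{TPartial} provides the pointwise bound $|\partial_{x_i}\mathcal R_\alpha(x,y)|\leq C[x,y]^{-(\alpha+n+1)}$, uniform in $y$ for each $i$. This both justifies differentiation under the integral sign and yields
$$\Bigl|\frac{\partial}{\partial x_i}P_\alpha f(x)\Bigr|\leq\frac{C\,\|f\|_\infty}{V_\alpha}\int_\mathbb B\frac{(1-|y|^2)^\alpha}{[x,y]^{\alpha+n+1}}\,d\nu(y).$$
I would then invoke the standard Forelli--Rudin type estimate on $\mathbb B$,
$$\int_\mathbb B\frac{(1-|y|^2)^{t}}{[x,y]^{n+t+c}}\,d\nu(y)\lesssim(1-|x|^2)^{-c}\qquad(t>-1,\ c>0),$$
with $t=\alpha$ and $c=1$, to conclude $|\nabla P_\alpha f(x)|\leq C\|f\|_\infty(1-|x|^2)^{-1}$. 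Multiplying by $(1-|x|^2)$ and taking the supremum over $x\in\mathbb B$ delivers the required Bloch bound.

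The only genuine difficulty is already packaged into Theorem \ref{TPartial}: once the first-order derivative kernel estimate is in hand, the present theorem reduces to a single application of the classical Forelli--Rudin integral estimate, whose ball-type versions are well known in the harmonic Bergman literature. In particular, no new cancellation or oscillation properties of $\mathcal R_\alpha$ need to be exploited beyond the pointwise bounds already established.
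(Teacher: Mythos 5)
Your proposal is correct and follows essentially the same route as the paper's proof: bound $|P_\alpha f(0)|$ using $\mathcal R_\alpha(0,\cdot)\equiv 1$ (or the $[0,y]=1$ bound), then combine Theorem~\ref{TPartial} with the Forelli--Rudin integral estimate (Lemma~\ref{LIntBracket} in the paper) to obtain $(1-|x|^2)|\nabla P_\alpha f(x)|\lesssim\|f\|_\infty$. The extra remarks on $\mathcal H$-harmonicity and differentiation under the integral sign are harmless and implicit in the paper's treatment.
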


This paper is organized as follows.
In the Preliminaries section we recall some known facts about (generalized)
hypergeometric functions and a few integral estimates that will be used later.
In Section \ref{SA} we prove Theorem \ref{TIm} and obtain asymptotic expansion
of the coefficient $c_m(\alpha)$.
In Section \ref{SE} we obtain the upper estimate of $\mathcal R_\alpha$.
Our method will be general and will also give an upper estimate of the Hardy kernel
improving \cite[Theorem 3.2]{St3}.
In Section \ref{SPar} we estimate partial derivatives of $\mathcal R_\alpha$
and prove Theorem \ref{TPartial}, and in Section \ref{SInt} we prove the two-sided
estimate in Theorem \ref{TIntegral}.
We prove the projection theorems, Theorems \ref{Tprojection} and \ref{TProjBloch},
in the final section.

\section{Preliminaries}\label{SP}

\subsection{Notation}

We use the letter $C$ to denote positive constants whose exact value
may differ at each occurrence.
For two positive expressions $X$ and $Y$ we write $X\lesssim Y$ to mean
$X\leq CY$ for some $C>0$.
The constant $C$ may depend on the parameters $\alpha,\beta,n$ etc. that are
fixed beforehand, but will be independent of the variables $x,y\in\mathbb B$ etc.
If both $X\lesssim Y$ and $Y\lesssim X$, we write $X\sim Y$.

The Beta function is given by
\begin{equation}\label{BetaF}
B(a,b)=\int_0^1 t^{a-1}(1-t)^{b-1}\,dt
=\frac{\Gamma(a)\Gamma(b)}{\Gamma(a+b)} \qquad (Re\{a\}>0, Re\{b\}>0),
\end{equation}
where $\Gamma$ is the Euler Gamma function.

For $a\in\mathbb C$ and $k$ non-negative integer, the rising factorial
$(a)_k$ is defined as $(a)_0:=1$ and for $k\geq 1$, $(a)_k:=a(a+1)\dots(a+k-1)$.
If $a\neq 0,-1,-2,\dots$,
\begin{equation}\label{Poch}
(a)_k=\frac{\Gamma(a+k)}{\Gamma(a)}.
\end{equation}
By Stirling's formula, for $a,b\in\mathbb C$ and $k$ non-negative integer
\begin{equation}\label{Stirling}
\frac{\Gamma(k+a)}{\Gamma(k+b)}\sim k^{a-b},
\qquad \frac{(a)_k}{(b)_k}\sim k^{a-b},
\qquad k\to\infty.
\end{equation}

For $x,y\in\mathbb \mathbb R^n$, the Ahlfors bracket
$[x,y]=\sqrt{1-2\langle x,y\rangle+|x|^2|y|^2}$.
It is symmetric and if either of the variables is $0$, then $[x,0]=[0,y]=1$.
Otherwise
\begin{equation}\label{bracketxy}
[x,y]=\Bigl||y|x-\frac{y}{|y|}\Bigr|=\Bigl|\frac{x}{|x|}-|x|y\Bigr|.
\end{equation}
It is clear that
\begin{equation}\label{bracketEst}
[x,y]\geq 1-|x||y|.
\end{equation}

For $f\in L^1(\mathbb B)$, integration in polar coordinates is (see \cite[p.~6]{ABR})
\begin{equation}\label{Polar}
\int_\mathbb B f\,d\nu(x)
=n\int_0^1r^{n-1}\int_\mathbb S f(r\zeta)\,d\sigma(\zeta)dr,
\end{equation}
where $\sigma$ is normalized surface-area measure on $\mathbb S$.

\subsection{Zonal Harmonics}

We briefly review the basic properties of zonal harmonics that will be used later.
For details we refer the reader to \cite[Chapter 5]{ABR}.
Denote by $H_m(\mathbb R^n$), the linear space of all homogeneous harmonic
polynomials of degree $m$.
It is finite dimensional with $\text{dim}\, H_m\sim m^{n-2}$ as $m\to\infty$.
By homogeneity, $p_m\in H_m(\mathbb R^n)$ is determined by its restriction to $\mathbb S$.
This restriction is called a spherical harmonic and the space of all spherical harmonics
of degree $m$ is denoted $H_m(\mathbb S)$.
Spherical harmonics of different degrees are orthogonal in $L^2(\mathbb S)$:
\begin{equation}\label{OrthSpher}
\int_\mathbb S p_m(\zeta)\,p_k(\zeta)\,d\sigma(\zeta)=0
\qquad(m\neq k,\, p_m\in H_m(\mathbb S),\,p_k\in H_k(\mathbb S)).
\end{equation}
Point evaluation functionals are bounded on $H_m(\mathbb S)$.
So, for every $\eta\in\mathbb S$ there exists $Z_m(\cdot,\eta)\in H_m(\mathbb S)$,
called the zonal harmonic of degree $m$ with pole $\eta$, such that
\begin{equation}\label{ReproduceZm}
p_m(\eta)=\int_\mathbb S p_m(\zeta) Z_m(\zeta,\eta)\,d\sigma(\zeta)
\qquad(p_m\in H_m(\mathbb S)).
\end{equation}
$Z_m(\cdot,\cdot)$ is real-valued, symmetric, and homogenous of degree $m$ with respect
to each variable.
When $m=0$, $Z_0=1$ and when $m\geq 1$,
\begin{equation}\label{EstZm}
Z_m(\zeta,\zeta)=\text{dim}\,H_m\sim m^{n-2}
\ \text{ and }\
|Z_m(\zeta,\eta)|\leq Z_m(\zeta,\zeta)
\qquad (\zeta,\eta\in\mathbb S).
\end{equation}

\subsection{Hypergeometric Functions}

Let $a.b.c\in\mathbb C$ with $c\neq 0,-1,-2,\dots$.
For $|z|<1$, the Gauss hypergeometric function
$F(a,b;c;z)={}_2F_1(a,b;c;z)$ is defined by the series
\begin{equation}\label{F21Series}
F(a,b;c;z)=\sum_{k=0}^\infty\frac{(a)_k(b)_k}{(c)_k k!}\,z^k.
\end{equation}
The function $F(a,b;c;z)$ can be analytically continued to $\mathbb C\backslash[1,\infty)$
which we denote by the same symbol, see \cite[\S9.1]{Le}.

\begin{remark}\label{RHyp1}
If $Re\{c-a-b\}>0$, then by \eqref{Stirling}, the series in \eqref{F21Series}
converges absolutely and uniformly on the closed disk $|z|\leq 1$
and at $z=1$ by \cite[Eq.~9.3.4]{Le},
\begin{equation}\label{Gaussf}
F(a,b;c;1)=\frac{\Gamma(c)\Gamma(c-a-b)}{\Gamma(c-a)\Gamma(c-b)}.
\end{equation}
\end{remark}

For a proof of the following transformation formula, see \cite[Eq.~9.5.3]{Le}.

\begin{lemma}\label{LEulerIdty}
Let $a,b,c\in\mathbb C$ with $c\neq 0,-1,-2,\dots$.
For $z\in\mathbb C\backslash[1,\infty)$,
$$F(a,b;c;;z)=(1-z)^{c-a-b}F(c-a,c-b;c;z).$$
\end{lemma}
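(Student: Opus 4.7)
The plan is to deduce Euler's identity from Pfaff's transformation
$$F(a,b;c;z)=(1-z)^{-a}\,F\Bigl(a,c-b;c;\tfrac{z}{z-1}\Bigr)$$
applied twice, and then extend to the full parameter regime by analytic continuation. To establish Pfaff, I would start from the Euler integral representation
$$F(a,b;c;z)=\frac{\Gamma(c)}{\Gamma(b)\Gamma(c-b)}\int_0^1 t^{b-1}(1-t)^{c-b-1}(1-zt)^{-a}\,dt,$$
valid for $\operatorname{Re} c>\operatorname{Re} b>0$ and $|z|<1$, and substitute $t\mapsto 1-t$. Using the algebraic identity $1-z(1-t)=(1-z)\bigl(1-\tfrac{z}{z-1}t\bigr)$ and factoring $(1-z)^{-a}$ out of the integrand, what remains is exactly the Euler representation of $F\bigl(a,c-b;c;\tfrac{z}{z-1}\bigr)$, which yields Pfaff's formula.

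Next I would apply Pfaff once more to the right-hand side, with argument $w=z/(z-1)$ and parameters $(a,c-b;c)$. Direct computation gives $1-w=1/(1-z)$ and $w/(w-1)=z$, so
$$F\bigl(a,c-b;c;w\bigr)=(1-w)^{c-b}F(c-a,c-b;c;z)=(1-z)^{c-b}F(c-a,c-b;c;z).$$
Multiplying by the prefactor $(1-z)^{-a}$ from the first application collapses to $(1-z)^{c-a-b}$, which is precisely Euler's identity, proved so far on $|z|<1$ under $\operatorname{Re} c>\operatorname{Re} b>0$.

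To finish, both sides of the asserted identity are analytic on $\mathbb C\setminus[1,\infty)$ if one takes the principal branch of $(1-z)^{c-a-b}$ (positive on $(-\infty,1)$), so equality on $|z|<1$ extends to the full cut plane by the identity principle. The restriction $\operatorname{Re} c>\operatorname{Re} b>0$ is removed by noting that both sides are meromorphic in $b$ for fixed $a,c,z$, with singularities only from $c\in\{0,-1,-2,\dots\}$, and agree on a nonempty open subset of parameter space. The only genuine obstacle is bookkeeping of the branches of $(1-z)^{-a}$, $(1-w)^{c-b}$ and $(1-z)^{c-a-b}$ during the composition; restricting first to $z\in(-\infty,1)$ makes all factors positive real, and uniqueness of analytic continuation across $\mathbb C\setminus[1,\infty)$ then pins the branches globally.

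A parallel, more structural route would avoid integrals entirely: substitute $w(z)=(1-z)^{c-a-b}u(z)$ into the hypergeometric ODE $z(1-z)w''+[c-(a+b+1)z]w'-ab\,w=0$ and verify after a short computation that $u$ satisfies the hypergeometric ODE with parameters $(c-a,c-b;c)$. Since both $F(a,b;c;z)$ and $(1-z)^{c-a-b}F(c-a,c-b;c;z)$ are regular at $z=0$ with value $1$ there, uniqueness of the Frobenius solution normalized at the origin (when $c\notin\{0,-1,-2,\dots\}$) forces them to coincide, again extending to $\mathbb C\setminus[1,\infty)$ by analytic continuation.
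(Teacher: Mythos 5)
The paper offers no proof of this lemma at all: it simply cites Lebedev \cite[Eq.~9.5.3]{Le} for Euler's transformation, as is common for such a classical fact. So there is nothing in the paper to compare against, and any complete, correct proof is a genuine addition.

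Your argument is sound. Deriving Pfaff from the Euler integral by the substitution $t\mapsto 1-t$ together with $1-z(1-t)=(1-z)\bigl(1-\tfrac{z}{z-1}t\bigr)$ is standard and correct, and iterating Pfaff twice (using symmetry of $F$ in its first two parameters) does produce Euler's identity; the final analytic-continuation step in $z$ and meromorphic continuation in the parameters is the usual bookkeeping and is stated accurately. The alternative ODE route you sketch is also a correct, classical proof. One small slip to fix: in the display for the second Pfaff application the exponent should be $(1-w)^{b-c}$, not $(1-w)^{c-b}$, and since $1-w=(1-z)^{-1}$ this evaluates to $(1-z)^{c-b}$. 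As written, both the Pfaff exponent and the substitution have their signs reversed, so the two errors cancel and the final result $(1-z)^{c-b}F(c-a,c-b;c;z)$ is correct — but the intermediate line should read $(1-w)^{b-c}=(1-z)^{c-b}$.
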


A proof of the following integral formula for $F(a,b;c;z)$ can be
found in \cite[Eq.~9.1.4]{Le}.
The functions in the integrand take their principal values.

\begin{lemma}\label{LF21Integral}
Suppose $Re\{c\}>Re\{a\}>0$.
Then for $|z|<1$,
$$F(a,b;c;;z)=\frac{\Gamma(c)}{\Gamma(a)\Gamma(c-a)}
\int_0^1 t^{a-1}(1-t)^{c-a-1}(1-zt)^{-b}\,dt.$$
Moreover, the right-hand side gives the analytic continuation of $F(a,b;c;z)$ to
$\mathbb C\backslash[1,\infty)$.
Therefore the above formula is true for all $z\in\mathbb C\backslash[1,\infty)$
\end{lemma}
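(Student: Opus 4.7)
The plan is to establish the identity for $|z|<1$ by expanding the integrand in a convergent binomial series and evaluating term by term as Beta integrals, and then to extend the identity to $\mathbb{C}\setminus[1,\infty)$ by the identity theorem.

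For $|z|<1$ and $t\in[0,1]$ we have $|zt|<1$, so
$$(1-zt)^{-b}=\sum_{k=0}^\infty\frac{(b)_k}{k!}(zt)^k$$
converges. I would justify interchanging sum and integral by dominated convergence: the partial sums are dominated on $[0,1]$ by a constant depending only on $z$ and $b$, and the factor $t^{a-1}(1-t)^{c-a-1}$ is integrable by the assumptions $\mathrm{Re}(c)>\mathrm{Re}(a)>0$. This gives
$$\int_0^1 t^{a-1}(1-t)^{c-a-1}(1-zt)^{-b}\,dt=\sum_{k=0}^\infty\frac{(b)_k}{k!}\,z^k\int_0^1 t^{a+k-1}(1-t)^{c-a-1}\,dt.$$
Each inner integral is the Beta function $B(a+k,c-a)$; by \eqref{BetaF} and \eqref{Poch} it equals $\dfrac{\Gamma(a)\Gamma(c-a)}{\Gamma(c)}\cdot\dfrac{(a)_k}{(c)_k}$. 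Multiplying both sides by $\Gamma(c)/[\Gamma(a)\Gamma(c-a)]$ and comparing with \eqref{F21Series} yields the claimed formula on $|z|<1$.

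For the second assertion I would show that the right-hand side defines a holomorphic function of $z$ on $\mathbb{C}\setminus[1,\infty)$, so the identity theorem propagates the equality from $|z|<1$ to the full domain. The key point is that for $z\in\mathbb{C}\setminus[1,\infty)$ and $t\in[0,1]$, the product $zt$ also lies in $\mathbb{C}\setminus[1,\infty)$ (if $t>0$ and $zt\in[1,\infty)$, then $z\in[1/t,\infty)\subset[1,\infty)$, a contradiction); hence $1-zt\notin(-\infty,0]$ and $(1-zt)^{-b}$ is well-defined via the principal branch. Holomorphy of the integral in $z$ then follows by Morera's theorem or by differentiation under the integral sign, with the integrand bounded on each compact set $K\subset\mathbb{C}\setminus[1,\infty)$ by $C_K\,t^{\mathrm{Re}(a)-1}(1-t)^{\mathrm{Re}(c-a)-1}$, which is integrable. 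The only genuine subtlety is this branch-cut analysis; once it is in hand, the term-by-term integration on $|z|<1$ and a routine invocation of the identity theorem finish the proof.
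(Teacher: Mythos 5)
The paper does not prove this lemma; it simply cites Lebedev \cite[Eq.~9.1.4]{Le}. Your proof is correct and is, in fact, the classical proof of the Euler integral representation: term-by-term integration of the binomial series against the Beta kernel on $|z|<1$, followed by analytic continuation via the identity theorem once one checks that $1-zt$ avoids the cut $(-\infty,0]$ for $z\in\mathbb{C}\setminus[1,\infty)$ and $t\in[0,1]$. The details (dominated convergence for the interchange, the Beta--Gamma identity \eqref{BetaF} combined with \eqref{Poch}, the compactness argument giving local uniform boundedness of the integrand and hence holomorphy of the integral) are all in order, and the branch-cut observation you flag as the one subtle point is exactly right. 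Since the paper offers only a citation, there is no authorial proof to contrast with; your argument is the standard one that the cited reference contains.
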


\begin{remark}\label{RF21Pos}
The integral above shows that if $a,b,c$ are real and $c>a>0$, then $F(a,b;c;z)>0$ when
$-\infty<z<1$.
If, in addition $b<0$, then $F(a,b;c;z)$ is decreasing on the interval $-\infty<z<1$.
\end{remark}

The next identity appears in \cite[Eq.(4), p.399]{EMOT}.
For a proof, see \cite[Lemma 2.1]{Li}.

\begin{lemma}\label{LHyperInt}
Suppose $Re\{c\}>0$, $Re\{\rho\}>0$ and $Re\{c-a-b+\rho\}>0$.
Then
$$\int_0^1 t^{c-1}(1-t)^{\rho-1} F(a,b;c;t)\,dt
=\frac{\Gamma(c)\Gamma(\rho)\Gamma(c-a-b+\rho)}{\Gamma(c-a+\rho)\Gamma(c-b+\rho)}.$$
\end{lemma}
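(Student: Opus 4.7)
The plan is to expand the hypergeometric function as a power series and integrate term by term, then recognize the resulting sum as $F(a,b;c+\rho;1)$ and evaluate it by Gauss's formula. Write
$$F(a,b;c;t)=\sum_{k=0}^\infty\frac{(a)_k(b)_k}{(c)_k\,k!}\,t^k,\qquad 0<t<1,$$
and, assuming the interchange of sum and integral for the moment, compute
$$\int_0^1 t^{c-1}(1-t)^{\rho-1}F(a,b;c;t)\,dt
=\sum_{k=0}^\infty\frac{(a)_k(b)_k}{(c)_k\,k!}\,B(c+k,\rho).$$
Using \eqref{BetaF} together with the Pochhammer identity \eqref{Poch} to rewrite $B(c+k,\rho)=\Gamma(c+k)\Gamma(\rho)/\Gamma(c+\rho+k)$ and then $\Gamma(c+k)/\Gamma(c+\rho+k)=\Gamma(c)/\bigl(\Gamma(c+\rho)\,(c+\rho)_k/(c)_k\bigr)$, the series collapses to
$$\frac{\Gamma(c)\Gamma(\rho)}{\Gamma(c+\rho)}\sum_{k=0}^\infty\frac{(a)_k(b)_k}{(c+\rho)_k\,k!}
=\frac{\Gamma(c)\Gamma(\rho)}{\Gamma(c+\rho)}\,F(a,b;c+\rho;1).$$
Since the hypothesis $\operatorname{Re}\{c-a-b+\rho\}>0$ is exactly the Gauss convergence condition for parameter $c+\rho$, Remark \ref{RHyp1} applies and \eqref{Gaussf} gives
$$F(a,b;c+\rho;1)=\frac{\Gamma(c+\rho)\Gamma(c-a-b+\rho)}{\Gamma(c-a+\rho)\Gamma(c-b+\rho)}.$$
Multiplying, the factor $\Gamma(c+\rho)$ cancels and the claimed identity drops out.

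The only subtlety lies in justifying the term-by-term integration. If one assumes the stronger condition $\operatorname{Re}\{c-a-b\}>0$, then by Remark \ref{RHyp1} the series for $F(a,b;c;t)$ converges absolutely and uniformly on $[0,1]$, so Fubini's theorem applies with no difficulty. In general, $F(a,b;c;t)$ may grow like $(1-t)^{c-a-b}$ as $t\to 1^-$, but the weight $(1-t)^{\rho-1}$ combined with the standing hypothesis $\operatorname{Re}\{c-a-b+\rho\}>0$ still keeps the integrand in $L^1(0,1)$. The clean way to handle this is by analytic continuation: prove the identity under the temporary assumption $\operatorname{Re}\{c-a-b\}>0$, observe that both sides are holomorphic functions of $\rho$ on the open half-plane $\operatorname{Re}\{\rho\}>\max(0,a+b-c)$ (the left-hand side by Morera's theorem applied to the convergent integral with the integrand dominated uniformly on compact subsets of the parameter region, and the right-hand side from the analyticity properties of $\Gamma$), and then extend to the full range stated in the lemma by the identity theorem.

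The main obstacle, as sketched, is precisely this justification of the interchange in the borderline case $\operatorname{Re}\{c-a-b\}\le 0$; the algebraic manipulation itself reduces entirely to the Beta integral and Gauss's $F(\,\cdot\,;1)$ formula, both of which are already recorded in the preliminaries.
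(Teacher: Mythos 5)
The paper itself does not prove this lemma; it cites \cite[Lemma 2.1]{Li}, so your proof fills a gap rather than duplicating an argument. Your algebraic reduction is correct: term-by-term integration produces a Beta function, the Pochhammer rewriting collapses the series to $\tfrac{\Gamma(c)\Gamma(\rho)}{\Gamma(c+\rho)}\,F(a,b;c+\rho;1)$, and Gauss's formula \eqref{Gaussf} finishes the job. Moreover you have correctly identified that justifying the interchange is the only delicate point.

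However, the analytic-continuation step as you have written it does not close the gap. You first prove the identity under the temporary hypothesis $\operatorname{Re}\{c-a-b\}>0$ and then propose to extend ``by holomorphy in $\rho$.'' But the temporary hypothesis is a constraint on $a,b,c$ only and does not involve $\rho$: if $a,b,c$ are fixed with $\operatorname{Re}\{c-a-b\}\le 0$, then \emph{no} choice of $\rho$ brings you into the region where your direct computation is valid, so varying $\rho$ never gets a foothold. (And if $\operatorname{Re}\{c-a-b\}>0$ already holds, then the lemma's range $\operatorname{Re}\{\rho\}>0$ is exactly what you have proved, so there is nothing to extend.) To make the continuation argument work you would have to hold $\rho$ fixed and continue in $a$ (or $b$, or $c$): for fixed $b,c,\rho$ with $\operatorname{Re}\{c\}>0$, $\operatorname{Re}\{\rho\}>0$, both sides are holomorphic in $a$ on the half-plane $\operatorname{Re}\{a\}<\operatorname{Re}\{c+\rho-b\}$, the identity holds on the nonempty open subset $\operatorname{Re}\{a\}<\operatorname{Re}\{c-b\}$, and the identity theorem extends it.

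A cleaner route avoids continuation altogether. Under the \emph{stated} hypotheses the term-by-term integration is already justified by absolute convergence: $\int_0^1 |t^{c+k-1}(1-t)^{\rho-1}|\,dt = B(\operatorname{Re}\{c\}+k,\operatorname{Re}\{\rho\}) \sim k^{-\operatorname{Re}\{\rho\}}$ while $\bigl|\tfrac{(a)_k(b)_k}{(c)_k k!}\bigr| \sim k^{\operatorname{Re}\{a+b-c\}-1}$ by \eqref{Stirling}, so the sum of the moduli behaves like $\sum_k k^{\operatorname{Re}\{a+b-c\}-1-\operatorname{Re}\{\rho\}}$, which converges precisely when $\operatorname{Re}\{c+\rho-a-b\}>0$. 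Fubini then applies directly, with no need to first restrict to $\operatorname{Re}\{c-a-b\}>0$.
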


\subsection{Generalized Hypergeometric Function ${}_3F_2$}

Let $a,b,c,d,e\in\mathbb C$ with $d$ and $e$ are different than $0,-1,-2,\dots$.
For $|z|<1$, the generalized hypergeometric series
${}_3F_2\left[
\begin{aligned}
& a,\,b,\,c\\
& d,\,e
\end{aligned};
z\right]$
is defined by
\begin{equation}\label{DefF32}
{}_3F_2\left[
\begin{aligned}
& a,\,b,\,c\\
& d,\,e
\end{aligned}\,;z\right]
=\sum_{k=0}^\infty\frac{(a)_k(b)_k(c)_k}{(d)_k(e)_k\,k!}\,z^k.
\end{equation}
The function ${}_3F_2$ can be analytically continued to $\mathbb C\backslash[1,\infty)$
which we denote by the same symbol.

\begin{remark}\label{RF32at1}
If $Re\{d+e-a-b-c\}>0$, then by \eqref{Stirling} the series in \eqref{DefF32} converges absolutely
and uniformly for $|z|\leq 1$.
\end{remark}

\begin{remark}\label{RChangeOrder}
The ${}_{3}F_{2}$ is symmetric with respect to its upper variables $a,b,c$ and
they can be permuted.
Same is true for the lower variables $d,e$.
\end{remark}

Similar to Lemma \ref{LF21Integral}, there is an integral formula for ${}_3F_2$.
It can be verified by expanding $(1-zt_1t_2)^{-c}$ in series (see \cite[Eq.(1.5)]{E}).

\begin{lemma}\label{LF32Integral}
Suppose $Re\{d\}>Re\{a\}>0$ and $Re\{e\}>Re\{b\}>0$.
For $|z|<1$,
\begin{align*}
{}_{3}F_{2}\left[
\begin{aligned}
& a,\,b,\,c\\
& d,\,e
\end{aligned};
z\right]
=&\,\frac{\Gamma(d)\Gamma(e)}{\Gamma(a)\Gamma(d-a)\Gamma(b)\Gamma(e-b)}\\
&\times \int_0^1\int_0^1 t_1^{a-1}(1-t_1)^{d-a-1}t_2^{b-1}(1-t_2)^{e-b-1}(1-zt_1t_2)^{-c}\,dt_1t_2.
\end{align*}
Moreover, the right-hand side gives the analytic continuation of ${}_3F_2$ to
$\mathbb C\backslash [1,\infty)$.
\end{lemma}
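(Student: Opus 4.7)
My plan is to follow the hint given in the paper and verify the identity by expanding $(1-zt_1t_2)^{-c}$ as a binomial series, interchanging sum and integral, and recognizing the resulting series as ${}_3F_2$. For $|z|<1$ and $t_1,t_2\in[0,1]$ we have $|zt_1t_2|<1$, so
$$(1-zt_1t_2)^{-c}=\sum_{k=0}^\infty\frac{(c)_k}{k!}\,z^k\,t_1^k\,t_2^k.$$
Since each term in this series, multiplied by $t_1^{a-1}(1-t_1)^{d-a-1}t_2^{b-1}(1-t_2)^{e-b-1}$, is integrable on $[0,1]^2$ (the exponents $\text{Re}\{a+k-1\},\text{Re}\{d-a-1\},\text{Re}\{b+k-1\},\text{Re}\{e-b-1\}$ all exceed $-1$ under the hypotheses) and the series of integrals of absolute values converges for $|z|<1$, Fubini-Tonelli permits interchange.

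After the interchange, the two one-variable integrals factor into Beta integrals, which by \eqref{BetaF} give
$$\int_0^1 t_1^{a+k-1}(1-t_1)^{d-a-1}\,dt_1=\frac{\Gamma(a+k)\Gamma(d-a)}{\Gamma(d+k)},$$
and analogously for $t_2$. Using the Pochhammer identity \eqref{Poch}, the quotients $\Gamma(a+k)/\Gamma(d+k)$ and $\Gamma(b+k)/\Gamma(e+k)$ become $(a)_k/(d)_k$ and $(b)_k/(e)_k$ times the constants $\Gamma(a)/\Gamma(d)$ and $\Gamma(b)/\Gamma(e)$. Collecting the Gamma prefactors and matching with \eqref{DefF32} yields the claimed identity for $|z|<1$.

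For the analytic continuation statement, I would argue that for each $(t_1,t_2)\in[0,1]^2$ and each $z\in\mathbb C\setminus[1,\infty)$ we have $1-zt_1t_2\neq 0$, so $(1-zt_1t_2)^{-c}$ (principal value) is holomorphic in $z$ on this domain. A standard dominated-convergence argument shows that the double integral itself defines a holomorphic function of $z$ on $\mathbb C\setminus[1,\infty)$: one can majorize the derivative in $z$ on compact subsets avoiding $[1,\infty)$, then apply Morera's theorem or simply differentiate under the integral sign. Since this holomorphic function agrees on $|z|<1$ with the series defining ${}_3F_2$, it coincides with the analytic continuation on the full cut plane by the identity theorem.

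The only mildly delicate step is ensuring the dominated convergence / uniform integrability needed to justify both the term-by-term integration and the holomorphy of the double integral near the cut $[1,\infty)$; the rest is a direct, routine manipulation of Gamma ratios. Since $c$ is unrestricted (only appearing through the factor $(1-zt_1t_2)^{-c}$), no further convergence conditions at the endpoints arise, and the hypotheses $\text{Re}\{d\}>\text{Re}\{a\}>0$, $\text{Re}\{e\}>\text{Re}\{b\}>0$ are exactly what is needed for the two Beta integrals to exist.
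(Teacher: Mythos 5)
Your proof is correct and follows exactly the approach the paper indicates: the paper gives only the one-sentence hint that the identity "can be verified by expanding $(1-zt_1t_2)^{-c}$ in series" (with a reference to Erd\'elyi), and you fill in precisely those details — binomial expansion, Fubini--Tonelli justified by the convergence of the majorant series, recognition of the two Beta integrals, collection of Pochhammer ratios, and identity theorem for the continuation to the cut plane. No gaps.
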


\begin{lemma}\label{LF32Prop}
Suppose $a,b,c,d,e$ are all real with $d,e\neq 0,-1,-2,\dots$.
\begin{enumerate}
\item[(i)] If $d>a>0$ and $e>b>0$, then
${}_3F_2\left[
\begin{aligned}
& a,\,b,\,c\\
& d,\,e
\end{aligned}\,;z\right]>0$ for $-\infty<z<1$.
\item[(ii)] If $d>a>0$, $e>b>0$ and $c<0$, then
${}_3F_2\left[
\begin{aligned}
& a,\,b,\,c\\
& d,\,e
\end{aligned}\,;z\right]$  is decreasing on the interval $(-\infty,1)$.
\item[(iii)] If $d>a>0$, $e>b>0$ and $c<0$, then
$0\leq {}_3F_2\left[
\begin{aligned}
& a,\,b,\,c\\
& d,\,e
\end{aligned}\,;1\right]\leq 1$.
\end{enumerate}
\end{lemma}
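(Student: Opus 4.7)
The plan is to rely on the integral representation from Lemma \ref{LF32Integral}. Under the hypotheses $d>a>0$ and $e>b>0$, the prefactor $K=\Gamma(d)\Gamma(e)/[\Gamma(a)\Gamma(d-a)\Gamma(b)\Gamma(e-b)]$ is positive and the measure
$$d\mu(t_1,t_2)=t_1^{a-1}(1-t_1)^{d-a-1}t_2^{b-1}(1-t_2)^{e-b-1}\,dt_1\,dt_2$$
is a finite positive measure on $(0,1)^2$ whose total mass is $B(a,d-a)B(b,e-b)=1/K$. So throughout, ${}_3F_2[a,b,c;d,e;z]=K\iint_{(0,1)^2}(1-zt_1t_2)^{-c}\,d\mu$.

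For (i), when $z<1$ and $(t_1,t_2)\in(0,1)^2$ one has $1-zt_1t_2>0$, so $(1-zt_1t_2)^{-c}>0$ for every real $c$; multiplying by $K>0$ and integrating against $d\mu$ gives strict positivity. For (ii), I would differentiate under the integral sign (justified on every compact subinterval of $(-\infty,1)$ by a routine dominated-convergence argument) to obtain
$$\frac{d}{dz}\,{}_3F_2\!\left[\begin{aligned}& a,\,b,\,c\\ & d,\,e\end{aligned};z\right]=Kc\iint_{(0,1)^2}t_1t_2(1-zt_1t_2)^{-c-1}\,d\mu(t_1,t_2),$$
which is negative on $(-\infty,1)$ because $c<0$ while the integral itself is positive by the argument for (i).

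For (iii), evaluating the integral representation at $z=0$ gives $K\iint d\mu=1$, so (ii) already yields ${}_3F_2[\,\cdots;z]\leq 1$ for $0\leq z<1$. To push the bound all the way to $z=1$ I would use the inequality $(1-t_1t_2)^{-c}\leq 1$, valid because $-c>0$ and $0<1-t_1t_2\leq 1$; plugging this into Lemma \ref{LF32Integral} at $z=1$ (the right-hand side converging since its integrand is bounded, consistent with Remark \ref{RF32at1} given that $d+e-a-b-c>0$) yields the upper bound, while the lower bound $\geq 0$ is immediate from the nonnegativity of the integrand. The proof is essentially mechanical once the integral representation is in hand; the only point that deserves a touch of care is the passage $z\uparrow 1$ in (iii), handled by dominated convergence using the uniform estimate $(1-zt_1t_2)^{-c}\leq(1-t_1t_2)^{-c}\leq 1$ for $z\in[0,1]$.
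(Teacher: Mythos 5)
Your argument is correct and follows essentially the same route as the paper: parts (i) and (ii) come from the double-integral representation of Lemma \ref{LF32Integral}, and part (iii) combines positivity, monotonicity, and the value $1$ at $z=0$ with the convergence/left-continuity at $z=1$ guaranteed by Remark \ref{RF32at1}. The only cosmetic difference is that you also give a direct pointwise bound $(1-t_1t_2)^{-c}\leq 1$ for the integrand at $z=1$, whereas the paper dispenses with any further integral estimate and simply cites Remark \ref{RF32at1} for left-continuity, then sandwiches the limit between the bounds from (i) and (ii); both are fine.
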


\begin{proof}
Parts (i) and (ii) follow from the integral in Lemma \ref{LF32Integral}.
For part (iii) note that by Remark \ref{RF32at1}, ${}_3F_2\left[
\begin{aligned}
& a,\,b,\,c\\
& d,\,e
\end{aligned}\,;z\right]$ is well-defined and left continuous at $z=1$.
Since at $z=0$, ${}_3F_2\left[
\begin{aligned}
& a,\,b,\,c\\
& d,\,e
\end{aligned}\,;0\right]=1$, the result follows from parts (i) and (ii).
\end{proof}

The next identity is taken from \cite[Eq.(6), p.~399]{EMOT}.
However, there is a typo in \cite{EMOT} where $(1-z)^{-\sigma}$ is written as $(1-z)^\sigma$.
Therefore we give a proof.

\begin{lemma}\label{LIntF21}
Suppose $Re\{c\}>0$, $Re\{\rho\}>0$ and $Re\{c-a-b+\rho\}>0$.
Then for $z\in\mathbb C\backslash[1,\infty)$,
\begin{align*}
\int_0^1 t^{c-1}(1-t)^{\rho-1}(1-zt)^{-\sigma}F(a,b;c;t)\,dt
=&\frac{\Gamma(c)\Gamma(\rho)\Gamma(c-a-b+\rho)}{\Gamma(c-a+\rho)\Gamma(c-b+\rho)}
\ (1-z)^{-\sigma}\\
&\times {}_{3}F_{2}\left[
\begin{aligned}
& \rho,\,\sigma,\,c-a-b+\rho\,\\
& c-a+\rho,\,c-b+\rho
\end{aligned};
\frac{z}{z-1} \right].
\end{align*}
\end{lemma}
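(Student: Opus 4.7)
The strategy is to reduce the claim to Lemma \ref{LHyperInt} by first rewriting $(1-zt)^{-\sigma}$ so that the variable $t$ appears only through $(1-t)$, and then expanding via the binomial series. The elementary identity $1-zt=(1-z)+z(1-t)$ factors as
$$1-zt=(1-z)\bigl[1-\tfrac{z}{z-1}(1-t)\bigr],$$
so, taking principal branches,
$$(1-zt)^{-\sigma}=(1-z)^{-\sigma}\bigl[1-\tfrac{z}{z-1}(1-t)\bigr]^{-\sigma}=(1-z)^{-\sigma}\sum_{k=0}^\infty\frac{(\sigma)_k}{k!}\Bigl(\tfrac{z}{z-1}\Bigr)^k(1-t)^k,$$
with the series converging uniformly in $t\in[0,1]$ provided $|z/(z-1)|<1$, i.e., $\operatorname{Re}\{z\}<\tfrac{1}{2}$.

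In that half-plane I would substitute this expansion into the integral, interchange sum and integral by Fubini, and apply Lemma \ref{LHyperInt} to each inner integral (with $\rho$ replaced by $\rho+k$; the hypothesis $\operatorname{Re}\{c-a-b+\rho\}>0$ is preserved for every $k\ge 0$). This produces
$$(1-z)^{-\sigma}\sum_{k=0}^\infty\frac{(\sigma)_k}{k!}\Bigl(\tfrac{z}{z-1}\Bigr)^k\,\frac{\Gamma(c)\Gamma(\rho+k)\Gamma(c-a-b+\rho+k)}{\Gamma(c-a+\rho+k)\Gamma(c-b+\rho+k)}.$$
Converting each shifted $\Gamma$-quotient into a Pochhammer symbol via \eqref{Poch} pulls out the unshifted constants as the prefactor in the claimed right-hand side, and what remains is precisely the defining series of ${}_3F_2$ with the stated parameters, evaluated at $z/(z-1)$.

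Finally, to pass from the half-plane $\operatorname{Re}\{z\}<\tfrac{1}{2}$ to all of $\mathbb{C}\setminus[1,\infty)$ I would invoke analytic continuation: both sides are holomorphic on $\mathbb{C}\setminus[1,\infty)$, the left-hand side by differentiation under the integral sign and the right-hand side because ${}_3F_2$ continues analytically (Lemma \ref{LF32Integral}) and the M\"obius map $z\mapsto z/(z-1)$ sends $\mathbb{C}\setminus[1,\infty)$ into itself. The main obstacle is justifying the Fubini interchange: one must bound $|F(a,b;c;t)|$ near $t=1$, by a constant when $\operatorname{Re}(c-a-b)>0$ and by $(1-t)^{\operatorname{Re}(c-a-b)}$ (with a logarithmic borderline) otherwise, and combine this with Stirling's estimates \eqref{Stirling} to dominate the double sum of absolute values by a convergent geometric series in $|z/(z-1)|$. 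Once that is secured, the gamma-ratio telescoping and the analytic-continuation step are purely formal.
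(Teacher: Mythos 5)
Your proposal follows the paper's proof essentially step for step: same factoring $1-zt=(1-z)\bigl[1-\tfrac{z}{z-1}(1-t)\bigr]$, same binomial expansion, same term-by-term application of Lemma \ref{LHyperInt}, same Pochhammer conversion, and the same analytic-continuation step to extend to $\mathbb{C}\setminus[1,\infty)$. The only difference is that the paper justifies the interchange by the uniform convergence of the binomial series on $[0,1]$ against the fixed integrable factor $t^{c-1}(1-t)^{\rho-1}F(a,b;c;t)$, which is slightly more economical than the Fubini argument you sketch, but the substance is identical.
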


\begin{proof}
Call the integral on the left as $I$.
Assume first that $\big|\frac{z}{z-1}\big|<1$.
Because
$$1-zt=1-z+z(1-t)=(1-z)\Bigl(1-\frac{z}{z-1}(1-t)\Bigr),$$
we have
$$(1-zt)^{-\sigma}
=(1-z)^{-\sigma}\sum_{m=0}^\infty\frac{(\sigma)_k}{k!}\Bigl(\frac{z}{z-1}\Bigr)^k(1-t)^k,$$
where the series uniformly converges for $0\leq t\leq 1$.
Inserting this into the integral and changing the orders of the series and the integral we obtain
$$I=(1-z)^{-\sigma}\sum_{m=0}^\infty\frac{(\sigma)_k}{k!}\Big(\frac{z}{z-1}\Big)^k
\int_0^1 t^{c-1}(1-t)^{\rho+k-1}F(a,b;c;t)\,dt.$$
Computing the integral on the right with Lemma \ref{LHyperInt} and applying \eqref{Poch} proves the
result when $\big|\frac{z}{z-1}\big|<1$.
The general case follows from analytic continuation.
\end{proof}

We will need two identities about the value of an ${}_3F_2$ at $z=1$.
The first one is due to Kummer (see \cite[16.4.11]{OLBC} or
\cite[Corollary 3.3.5, p.~142]{AAR}).

\begin{lemma}\label{LKummer}
If $Re\{d+e-a-b-c\}>0$ and $Re\{e-a\}>0$, then
$${}_3F_2\left[
\begin{aligned}
& a,\,b,\,c\\
& d,\,e
\end{aligned}
\,;1\right]
=\frac{\Gamma(e)\Gamma(d+e-a-b-c)}{\Gamma(e-a)\Gamma(d+e-b-c)}\,
{}_3F_2\left[
\begin{aligned}
& a,\,d-b,\,d-c\\
& d,\,d+e-b-c
\end{aligned}
\,;1\right].$$
\end{lemma}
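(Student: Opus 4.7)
The plan is to derive Kummer's identity self-contained from the integral representations and Euler's transformation already recorded in the Preliminaries, rather than quoting it from the cited references as a black box.

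First I would invoke Lemma~\ref{LF32Integral} at $z=1$, which is legitimate by continuity because the hypothesis $Re\{d+e-a-b-c\}>0$ makes the double integral absolutely convergent (the auxiliary conditions $Re\{d\}>Re\{a\}>0$ and $Re\{e\}>Re\{b\}>0$ can be imposed at this stage and removed at the end by analytic continuation in the parameters). Integrating out $t_1$ via Lemma~\ref{LF21Integral} collapses the double integral to the one-variable Euler representation
$${}_3F_2\left[
\begin{aligned}
& a,\,b,\,c\\
& d,\,e
\end{aligned}\,;1\right]
=\frac{\Gamma(e)}{\Gamma(b)\Gamma(e-b)}\int_0^1 t^{b-1}(1-t)^{e-b-1}F(a,c;d;t)\,dt.$$

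Next I would apply Lemma~\ref{LEulerIdty} to rewrite $F(a,c;d;t)=(1-t)^{d-a-c}F(d-a,d-c;d;t)$, absorbing the extra factor into the weight to obtain
$$\frac{\Gamma(e)}{\Gamma(b)\Gamma(e-b)}\int_0^1 t^{b-1}(1-t)^{d+e-a-b-c-1}F(d-a,d-c;d;t)\,dt.$$
Expanding the surviving ${}_2F_1$ as its defining series, interchanging sum and integral, and evaluating each term as a Beta integral via \eqref{BetaF} yields
$$\frac{\Gamma(e)\,\Gamma(d+e-a-b-c)}{\Gamma(b)\,\Gamma(e-b)}\sum_{k=0}^\infty\frac{(d-a)_k(d-c)_k\,\Gamma(b+k)}{(d)_k\,k!\,\Gamma(d+e-a-c+k)}.$$
Converting the Gamma quotients into Pochhammer symbols via \eqref{Poch} then assembles this as
$$\frac{\Gamma(e)\,\Gamma(d+e-a-b-c)}{\Gamma(e-b)\,\Gamma(d+e-a-c)}\,
{}_3F_2\left[
\begin{aligned}
& b,\,d-a,\,d-c\\
& d,\,d+e-a-c
\end{aligned}\,;1\right].$$

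This differs from the stated formula only by the interchange of $a$ and $b$; since the upper parameters of ${}_3F_2$ are symmetric (Remark~\ref{RChangeOrder}), I would swap $a\leftrightarrow b$ on both sides to recover the lemma, and the hypothesis $Re\{e-a\}>0$ then corresponds precisely to the $Re\{e-b\}>0$ used above to build the one-variable Euler integral. I expect the main obstacle to be the justification of the sum/integral interchange: the series $\sum_{k\geq 0}(d-a)_k(d-c)_k\,t^k/((d)_k\,k!)$ need not converge at $t=1$, so uniform convergence on the closed interval $[0,1]$ is unavailable. The clean way around this is a dominated-convergence argument exploiting that the weight $(1-t)^{d+e-a-b-c-1}$ is integrable and absorbs whatever boundary growth the series develops as $t\to 1^-$; alternatively, one first establishes the identity in a parameter subregion where every interchange is trivial and then propagates it to the full range by analytic continuation in $a,b,c,d,e$.
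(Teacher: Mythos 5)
The paper does not prove Lemma~\ref{LKummer}; it is quoted as a known identity with citations to [OLBC, 16.4.11] and [AAR, Corollary~3.3.5]. Your proposal supplies a self-contained proof, and the route you take is in fact the standard textbook derivation that underlies those references: integrate out $t_1$ in the double Euler representation of Lemma~\ref{LF32Integral} to reach the single Euler integral $\frac{\Gamma(e)}{\Gamma(b)\Gamma(e-b)}\int_0^1 t^{b-1}(1-t)^{e-b-1}F(a,c;d;t)\,dt$, apply Euler's transformation (Lemma~\ref{LEulerIdty}) to the inner ${}_2F_1$, and re-sum. I checked your algebra: after the Beta integrals and the Pochhammer conversion you land on $\frac{\Gamma(e)\Gamma(d+e-a-b-c)}{\Gamma(e-b)\Gamma(d+e-a-c)}\,{}_3F_2[\,b,\,d-a,\,d-c\,;\,d,\,d+e-a-c\,;1]$, which after the relabeling $a\leftrightarrow b$ (permissible by Remark~\ref{RChangeOrder}) is exactly the statement, and $Re\{e-a\}>0$ then becomes precisely the convergence condition for the ${}_3F_2(1)$ on the right-hand side. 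The gain of your approach over the paper's is a proof entirely within the toolbox already assembled in Section~\ref{SP}; the cost is that you must impose the auxiliary conditions $Re\{d\}>Re\{a\}>0$ and $Re\{e\}>Re\{b\}>0$ to use the integral representations, which indeed have to be removed afterwards by analytic continuation in the parameters (the two hypotheses in the lemma cannot be removed, being the convergence conditions for the two series).

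On the one point you flag as a possible obstacle: the term-by-term integration is in fact unproblematic, and dominated convergence does apply once the right dominant is chosen. The partial sums of the ${}_2F_1$ series satisfy $|S_N(t)|\le \sum_{k\ge 0}|u_k|\,t^k$ where $u_k=(d-a)_k(d-c)_k/\bigl((d)_k\,k!\bigr)$; by Stirling $|u_k|\sim k^{Re\{d-a-c\}-1}$, so this majorant is either bounded on $[0,1]$ (when $Re\{d-a-c\}\le 0$) or grows like $(1-t)^{-Re\{d-a-c\}}$ as $t\to 1^-$. Multiplying by the weight $t^{b-1}(1-t)^{d+e-a-b-c-1}$ gives a singularity at $t=1$ of order $(1-t)^{Re\{e-b\}-1}$ at worst, which is integrable under the hypothesis $Re\{e-b\}>0$ already in force. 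So either of your two suggested repairs works; the dominated-convergence route closes the argument without any further continuation step.
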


The next identity is called Dixon's well-poised sum
(see \cite[16.4.4]{OLBC} or \cite[Theorem 3.4.1, p.~143]{AAR}).
\begin{lemma}\label{LDixon}
If $Re\{a-2b-2c\}>-2$, then
$${}_3F_2\left[
\begin{aligned}
& a,\,b,\,c\\
& a-b+1,\,a-c+1
\end{aligned}
\,;1\right]
=\frac{\Gamma(\tfrac{a}{2}+1)\Gamma(a-b+1)\Gamma(a-c+1)\Gamma(\tfrac{a}{2}-b-c+1)}
{\Gamma(a+1)\Gamma(\tfrac{a}{2}-b+1)\Gamma(\tfrac{a}{2}-c+1)\Gamma(a-b-c+1)}.$$
\end{lemma}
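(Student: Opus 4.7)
The plan is to prove Dixon's well-poised sum by first handling a terminating case and then extending by analytic continuation.

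First, I would specialize one of the upper parameters, say $b = -n$ for a non-negative integer $n$, so that the ${}_3F_2$ collapses to a finite sum of $n+1$ terms. The key step is a series manipulation: reversing the order of summation $k \mapsto n-k$ and reindexing the four resulting Pochhammer symbols via $(x)_k = \Gamma(x+k)/\Gamma(x)$ recasts the finite sum in the form of Kummer's classical evaluation
\[
{}_2F_1(\alpha, -n; 1 + \alpha + n; -1) = \frac{\Gamma(1+\alpha+n)\,\Gamma(1+\tfrac{\alpha}{2})}{\Gamma(1+\alpha)\,\Gamma(1+\tfrac{\alpha}{2}+n)}.
\]
Applying this identity and simplifying gamma-function ratios (the well-poised structure $a+1 = b + (a-b+1) = c + (a-c+1)$ is precisely what makes the cancellations close up) yields the right-hand side of Dixon's formula specialized to $b = -n$.

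Second, having verified the identity for $b \in \{0, -1, -2, \dots\}$, I would invoke Carlson's theorem to upgrade it to all complex $b$ satisfying $\operatorname{Re}(a - 2b - 2c) > -2$. By Remark \ref{RF32at1}, the left-hand side is holomorphic in $b$ on a right half-plane (its convergence condition is exactly $\operatorname{Re}(a - 2b - 2c + 2) > 0$), and the right-hand side is meromorphic with no poles there. Stirling's estimate \eqref{Stirling} supplies the polynomial growth along vertical lines, comfortably below the exponential-type-$\pi$ threshold required by Carlson's theorem, forcing the two sides to agree everywhere. Symmetry in $b,c$ then gives the full range claimed.

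The main obstacle is the first step: the rearrangement of the terminating ${}_3F_2$ into a form to which Kummer's ${}_2F_1$ summation applies is the heart of the argument and depends crucially on the well-poised parameter relations, which enter as delicate gamma-function cancellations. An alternative route would be to derive Dixon's theorem from Bailey's contiguous-relation method for ${}_3F_2$, or via Whipple's quadratic transformation combined with Gauss's summation \eqref{Gaussf}; both ultimately rely on the same well-poised symmetry and reduce to verifying a sequence of gamma-function identities. I would not attempt to obtain the identity by a single application of Kummer's transformation from Lemma \ref{LKummer}, since with $d = a-b+1$, $e = a-c+1$ the transformed ${}_3F_2$ is neither well-poised nor Saalschützian and does not immediately collapse.
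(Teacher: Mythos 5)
The paper does not prove this lemma; it is a classical identity cited directly from \cite{OLBC} (16.4.4) and \cite{AAR} (Theorem 3.4.1), so there is no in-paper proof to compare against. Your two-stage plan --- establish the terminating case and then extend by Carlson's theorem --- is a recognized route to Dixon's theorem, and the Carlson step is set up correctly: the convergence abscissa $Re\{a-2b-2c\}>-2$ matches Remark~\ref{RF32at1}, and Stirling-type growth bounds along vertical lines are the right thing to check.

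However, the terminating step as you describe it does not go through. Reversing the order of summation $k\mapsto n-k$ in the terminating well-poised ${}_3F_2$ with $b=-n$ does not produce a ${}_2F_1$; it produces another terminating ${}_3F_2$. Applying $(x)_{n-k}=(-1)^k(x)_n/(1-x-n)_k$ to each of the six Pochhammer factors gives
\begin{equation*}
\sum_{k=0}^n\frac{(a)_k(-n)_k(c)_k}{(a+n+1)_k(a-c+1)_k\,k!}
=\frac{(-1)^n(a)_n(c)_n}{(a+n+1)_n(a-c+1)_n}\;
{}_3F_2\!\left[\begin{aligned}&-a-2n,\,-n,\,c-a-n\\ & 1-a-n,\,1-c-n\end{aligned}\,;1\right],
\end{equation*}
and the series on the right is again a terminating well-poised ${}_3F_2$ of Dixon's type with special parameter $-a-2n$, since $1+(-a-2n)=(-n)+(1-a-n)=(c-a-n)+(1-c-n)$. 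Reversal therefore exhibits a self-duality of the Dixon sum rather than reducing it to Kummer's ${}_2F_1(-1)$; used as a proof step it is circular. To actually evaluate the terminating case you need a genuine transformation beyond reversal --- for example the quadratic transformation taking a well-poised ${}_3F_2$ to a Saalsch\"utzian one followed by Pfaff--Saalsch\"utz, or Bailey's Cauchy-product derivation, or a chain of Thomae relations --- precisely the routes you list only in passing at the end. As written, step one, which you correctly identify as the heart of the argument, has a gap.
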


\subsection{The Factor $S_m(r)$}

Let $p_m\in H_m(\mathbb S)$.
The solution of the Dirichlet problem for $p_m$ in the hyperbolic ball
is given by (see \cite[Section 6.1]{St1})
$$f(x)=S_m(|x|)\,p_m(x)
\qquad (x\in \overline{\mathbb B}).$$
That is, $f$ is $\mathcal H$-harmonic on $\mathbb B$, continuous
on $\overline{\mathbb B}$ and equals $p_m$ on $\mathbb S$.
Here, the factor $S_m(r)$ is as given in \eqref{DefineSmr}.
It is a hypergeometric function normalized so that $S_m(1)=1$.
When $m=0$, $S_0(r)=1$.
When $m\geq 1$ by \eqref{Gaussf},
$$F(m,1-\tfrac{n}{2};m+\tfrac{n}{2};1)
=\frac{\Gamma(m+\tfrac{n}{2})\Gamma(n-1)}{\Gamma(\tfrac{n}{2})\Gamma(m+n-1)}
=\frac{B(m,n-1)}{B(m,\frac{n}{2})}$$
so that
\begin{equation}\label{Smr}
S_m(r)=\frac{B(m,\frac{n}{2})}{B(m,n-1)}\,F(m,1-\tfrac{n}{2};m+\tfrac{n}{2};r^2).
\end{equation}

The following estimate is proved in \cite[Lemma 2.6]{St3}.
A different proof is given below.

\begin{lemma}\label{LSmrEst}
There exists a constant $C>0$ depending only on $n$ such that
for all $m\geq 1$ and $0\leq r\leq 1$,
$$1\leq S_m(r)\leq C m^{\tfrac{n}{2}-1}.$$
\end{lemma}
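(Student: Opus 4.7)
The plan is to exploit the monotonicity of the hypergeometric factor in $r$ and then evaluate it at the two endpoints $r=1$ and $r=0$ in closed form.

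First I would establish that on $[0,1]$ the function $r\mapsto S_m(r)$ is nonincreasing and positive. For $n\geq 3$, in the parameters of $F(m,1-\tfrac n2;m+\tfrac n2;\cdot)$ we have $c=m+\tfrac n2>a=m>0$ and $b=1-\tfrac n2<0$, so Remark \ref{RF21Pos} applies and gives that $F(m,1-\tfrac n2;m+\tfrac n2;z)$ is positive and strictly decreasing on $(-\infty,1)$. Composing with the increasing map $r\mapsto r^2$ on $[0,1]$, and dividing by the positive constant $F(m,1-\tfrac n2;m+\tfrac n2;1)$, shows that $S_m$ is positive and nonincreasing on $[0,1]$. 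Consequently, for $0\le r\le 1$,
$$1=S_m(1)\le S_m(r)\le S_m(0)=\frac{1}{F(m,1-\tfrac n2;m+\tfrac n2;1)},$$
which already yields the lower bound $S_m(r)\ge 1$.

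For the upper bound I would evaluate the denominator with Gauss's summation formula. Since the excess $c-a-b=m+\tfrac n2-m-(1-\tfrac n2)=n-1>0$, Remark \ref{RHyp1} and \eqref{Gaussf} give
$$F\bigl(m,1-\tfrac n2;m+\tfrac n2;1\bigr)
=\frac{\Gamma(m+\tfrac n2)\,\Gamma(n-1)}{\Gamma(\tfrac n2)\,\Gamma(m+n-1)},$$
so that
$$S_m(0)=\frac{\Gamma(\tfrac n2)}{\Gamma(n-1)}\,\frac{\Gamma(m+n-1)}{\Gamma(m+\tfrac n2)}.$$
Applying Stirling's ratio \eqref{Stirling} with $a=n-1$ and $b=\tfrac n2$ gives $\Gamma(m+n-1)/\Gamma(m+\tfrac n2)\sim m^{n/2-1}$ as $m\to\infty$, and in particular this quotient is bounded above by $C\,m^{n/2-1}$ for all $m\ge 1$ with $C$ depending only on $n$. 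Combining the two displays finishes the proof.

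There is no genuine obstacle here; the only thing to watch is that the argument relies on $n\ge 3$ in two places: it ensures $b=1-\tfrac n2<0$ (needed for monotonicity via Remark \ref{RF21Pos}) and it ensures $c-a-b=n-1>0$ (needed to use \eqref{Gaussf} at $z=1$). Both hold under the standing assumption $n\ge 3$ made in the introduction.
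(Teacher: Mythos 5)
Your proof is correct and takes essentially the same approach as the paper: establish that $S_m$ is decreasing on $[0,1]$ via Remark \ref{RF21Pos}, evaluate at the endpoints to get $1=S_m(1)\leq S_m(r)\leq S_m(0)=B(m,\tfrac{n}{2})/B(m,n-1)$, and finish with Stirling. The only cosmetic difference is that the paper first passes through the Euler integral representation \eqref{IntSmr} of $S_m$ (which it reuses elsewhere) and deduces monotonicity from that integral, whereas you apply Remark \ref{RF21Pos} directly to the hypergeometric factor; the underlying argument is the same.
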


\begin{proof}
By \eqref{Smr} and Lemma \ref{LF21Integral}, the integral formula
\begin{equation}\label{IntSmr}
S_m(r)=\frac{1}{B(m,n-1)}\int_0^1 t^{m-1}(1-t)^{\frac{n}{2}-1}(1-r^2t)^{\frac{n}{2}-1}\, dt
\end{equation}
holds.
As in Remark \ref{RF21Pos}, it follows from the above integral that
$S_m(r)$ is decreasing on the interval $0\leq r\leq 1$.
Thus $S_m(1)\leq S_m(r)\leq S_m(0)$.
Putting $r=0$ in \eqref{Smr} shows $S_m(0)=B(m,\tfrac{n}{2})/B(m,n-1)$.
Thus
$$1\leq S_m(r)\leq \frac{B(m,\tfrac{n}{2})}{B(m,n-1)}
=\frac{\Gamma(\tfrac{n}{2})\Gamma(m+n-1)}{\Gamma(m+\tfrac{n}{2})\Gamma(n-1)},$$
and the result follows from \eqref{Stirling}.
\end{proof}

\subsection{Integral Estimates}
In the sequel we will need three integral estimates
which have been repeatedly proved in various places
and we don't know the original sources.
The first one is elementary.

\begin{lemma}\label{LInt01r}
Let $b>-1$ and $c\in\mathbb R$.
For $0\leq r<1$,
$$\int_0^1\frac{(1-t)^b\,dt}{(1-rt)^{1+b+c}}
\sim\begin{cases}
\dfrac{1}{(1-r)^{c}},&\text{if $c>0$};\\
1+\log\dfrac{1}{1-r},&\text{if $c=0$};\\
1,&\text{if $c<0$}.
\end{cases}$$
The implied constants depend only on $b,c$ and are independent of $r$.
\end{lemma}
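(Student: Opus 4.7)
The plan is to reduce the integral to a standard form by a single substitution and then split into cases according to the sign of $c$.

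For the easy range $0\leq r\leq 1/2$ I would argue directly: $1-rt\geq 1/2$ holds uniformly in $t$, so the integrand is comparable to $(1-t)^b$, which is integrable since $b>-1$. Thus the integral is bounded above and below by positive constants, while the right-hand side is also $\sim 1$ in each of the three cases because $1-r\sim 1$.

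For $1/2\leq r<1$ I would write $1-rt=(1-r)+r(1-t)$ and substitute $s=r(1-t)/(1-r)$. A routine computation should give
$$\int_0^1\frac{(1-t)^b\,dt}{(1-rt)^{1+b+c}}
=\frac{1}{r^{b+1}(1-r)^c}\int_0^{r/(1-r)}\frac{s^b\,ds}{(1+s)^{1+b+c}}.$$
Since $r\sim 1$ in this range, the factor $r^{b+1}$ is harmless, so the problem reduces to estimating $J(M):=\int_0^M s^b(1+s)^{-1-b-c}\,ds$ uniformly for $M=r/(1-r)\in[1,\infty)$, where $M\sim 1/(1-r)$.

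The three cases for $J(M)$ as $M\to\infty$ are then straightforward. If $c>0$, the integrand decays like $s^{-1-c}$ at infinity so $J$ extends to the finite positive limit $B(b+1,c)$, and being increasing it is bounded above and below by positive constants on $[1,\infty)$, yielding the estimate $\sim(1-r)^{-c}$. If $c=0$, the integrand is $\sim s^{-1}$ at infinity and $J(M)\sim 1+\log M\sim 1+\log(1/(1-r))$. If $c<0$, the tail $s^{-1-c}$ is not integrable and $J(M)\sim M^{-c}\sim(1-r)^c$, which cancels the outer prefactor $(1-r)^{-c}$ and produces a constant. There is no real obstacle here; the whole argument is just careful bookkeeping of the substitution together with elementary tail estimates of $J(M)$.
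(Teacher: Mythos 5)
Your proof is correct and complete. The paper does not actually prove this lemma: it is introduced as one of three integral estimates that ``have been repeatedly proved in various places,'' with the present one described as ``elementary'' and stated without argument, so there is no proof of record to compare against. Your substitution $s = r(1-t)/(1-r)$, which turns $1-rt$ into $(1-r)(1+s)$, is exactly the right normalization: it isolates the factor $(1-r)^{-c}$ and reduces the problem to the behavior of $J(M)=\int_0^M s^b(1+s)^{-1-b-c}\,ds$ as $M=r/(1-r)\to\infty$, where the convergence/divergence of the tail $\int^\infty s^{-1-c}\,ds$ transparently produces the trichotomy in $c$. The preliminary split at $r=1/2$ is the right way to handle uniformity near $r=0$, and the remaining tail estimates of $J(M)$ (bounded when $c>0$, $\sim\log M$ when $c=0$, $\sim M^{-c}$ when $c<0$) are routine and correctly stated; the only place you might add a sentence when writing this up is the two-sided bound $J(M)\sim M^{-c}$ uniformly for $M\in[1,\infty)$ in the case $c<0$, which needs the observation that $J(1)>0$ handles small $M$ while $\int_1^M s^{-1-c}\,ds\gtrsim M^{-c}$ for $M\geq 2$ handles large $M$, but this is indeed just bookkeeping as you say.
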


A proof of the next lemma can be found in, for example, \cite[Lemma 6.1]{GKU}.

\begin{lemma}\label{LInt01}
Let $a,b>-1$ and $c\in\mathbb R$.
For $x\in\mathbb B$ and $\eta\in\mathbb S$,
$$\int_0^1\frac{t^{a}(1-t)^b\,dt}{|tx-\eta|^{1+b+c}}
\sim\begin{cases}
\dfrac{1}{|x-\eta|^{c}},&\text{if $c>0$};\\
1+\log\dfrac{1}{|x-\eta|},&\text{if $c=0$};\\
1,&\text{if $c<0$}.
\end{cases}$$
The implied constants depend only on $a,b,c,n$ and are independent of $x$.
\end{lemma}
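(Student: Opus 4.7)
The plan is to replace the denominator by an equivalent but much simpler expression. First I would establish the identity
$$|tx-\eta|^{2}=(1-t)^{2}+t|x-\eta|^{2}+t(1-t)(1-|x|^{2}),$$
which follows by direct expansion using $|\eta|=1$. Since all three summands are non-negative for $0\le t\le 1$ and $x\in\mathbb B$, this is the key source of lower bounds. Combined with the triangle inequality $|tx-\eta|\le (1-t)|x|+|x-\eta|\le (1-t)+|x-\eta|$ on the upper side, a short case split (comparing $1-t$ with $|x-\eta|/2$ and using $|tx-\eta|\ge 1-t|x|\ge 1-t$ in one case and $|tx-\eta|\ge |x-\eta|-(1-t)$ in the other) yields the uniform comparison
$$|tx-\eta|\sim (1-t)+|x-\eta|,$$
with constants depending only on $n$.

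With this in hand, the integral is comparable to
$$J:=\int_{0}^{1}\frac{t^{a}(1-t)^{b}\,dt}{\bigl((1-t)+|x-\eta|\bigr)^{1+b+c}}.$$
I would substitute $u=1-t$, set $\delta:=|x-\eta|\in(0,2]$, and split $[0,1]=[0,1/2]\cup[1/2,1]$. On $[1/2,1]$ the denominator is bounded below by a constant and $(1-u)^{a}$ is integrable because $a>-1$, so this piece contributes $O(1)$. On $[0,1/2]$ the factor $(1-u)^{a}$ is comparable to $1$, reducing the problem to the single-parameter integral
$$K(\delta):=\int_{0}^{1/2}\frac{u^{b}\,du}{(u+\delta)^{1+b+c}},$$
which is exactly the integral governed by Lemma \ref{LInt01r} (up to handling of the upper limit), and which is by now standard.

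For the remaining analysis of $K(\delta)$, when $\delta\ge 1/4$ the quantity $K(\delta)$ lies between positive constants depending only on $b,c$, which matches the claim since $\delta^{-c}$, $1+\log(1/\delta)$ and $1$ are all $\sim 1$ on this range. For $\delta<1/4$ I would rescale $u=\delta v$ to get
$$K(\delta)=\delta^{-c}\int_{0}^{1/(2\delta)}\frac{v^{b}\,dv}{(v+1)^{1+b+c}},$$
where $b>-1$ guarantees integrability near $v=0$. The three cases of the lemma now correspond exactly to the behavior at $v=\infty$: if $c>0$ the integral converges as $\delta\to 0$ and $K(\delta)\sim\delta^{-c}$; if $c=0$ it grows like $\log(1/\delta)$; if $c<0$ the tail contributes $\sim(1/\delta)^{-c}$, cancelling the prefactor and giving $K(\delta)\sim 1$.

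The main obstacle is really only the initial comparison $|tx-\eta|\sim (1-t)+|x-\eta|$ uniformly in $x\in\mathbb B$ and $\eta\in\mathbb S$; once this is secured, everything reduces to the one-variable Lemma \ref{LInt01r} and a routine scaling. The factor $t^{a}$ plays essentially no role beyond ensuring integrability away from $t=1$, which is why the exponent $a$ does not appear in the stated asymptotics.
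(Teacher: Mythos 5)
The paper does not actually give a proof of this lemma; it simply cites \cite[Lemma 6.1]{GKU}, so there is nothing in the paper to compare against. Your proposal, however, is a correct and self-contained proof.

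The algebraic identity $|tx-\eta|^{2}=(1-t)^{2}+t|x-\eta|^{2}+t(1-t)(1-|x|^{2})$ is verified by direct expansion using $|\eta|=1$, and the uniform comparison $|tx-\eta|\sim(1-t)+|x-\eta|$ follows from it together with the triangle inequality exactly as you sketch. The reduction to $K(\delta)=\int_{0}^{1/2}u^{b}(u+\delta)^{-(1+b+c)}\,du$ and the scaling $u=\delta v$ are standard and correct; the three regimes do indeed come from the convergence, logarithmic divergence, or power divergence of $\int_{0}^{M}v^{b}(1+v)^{-(1+b+c)}\,dv$ at $M=1/(2\delta)$, and $b>-1$ handles integrability at $0$. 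One small point worth making explicit is that for $c<0$ the required lower bound $\gtrsim 1$ already comes out of the $K(\delta)$ analysis (which gives $K(\delta)\sim 1$ uniformly in $\delta\in(0,2]$), so the $O(1)$ contribution from $u\in[1/2,1]$ need only be a uniform \emph{upper} bound, which is what you prove. You are also right that $a$ enters only through integrability of $(1-u)^{a}$ near $u=1$; it disappears from the asymptotics. Your observation that the two-sided comparison has constants independent even of $n$ (the integrand only sees $|x|$ and $\langle x,\eta\rangle$) is a mild sharpening of the statement, which merely allows dependence on $n$.
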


The following lemma is the real analog of \cite[Proposition 1.4.10]{Ru}.
For a proof see, for example, \cite[Proposition 2.2]{LS}.

\begin{lemma}\label{LIntBracket}
Let $b>-1$ and $c\in\mathbb R$.
For $x\in\mathbb B$,
$$\int_\mathbb B\frac{1-|y|^2)^b\,d\nu(y)}{[x,y]^{n+b+c}}
\sim\begin{cases}
\dfrac{1}{(1-|x|^2)^{c}},&\text{if $c>0$};\\
1+\log\dfrac{1}{1-|x|^2},&\text{if $c=0$};\\
1,&\text{if $c<0$}.
\end{cases}$$
The implied constants depend only on $b,c,n$ and are independent of $x$.
\end{lemma}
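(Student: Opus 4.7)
My plan is to pass to polar coordinates via \eqref{Polar}, use \eqref{bracketxy} to convert the Ahlfors bracket into a Euclidean distance from a sphere point, and then peel off the two one-dimensional integrations in succession: first the radial one (via Lemma \ref{LInt01}), then the spherical one.

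First I would observe that for $y=r\zeta$ with $r=|y|$ and $\zeta\in\mathbb S$, \eqref{bracketxy} yields $[x,r\zeta]=|rx-\zeta|$, so that
$$\int_\mathbb B\frac{(1-|y|^2)^b}{[x,y]^{n+b+c}}\,d\nu(y)
=n\int_\mathbb S\!\int_0^1\frac{r^{n-1}(1-r^2)^b}{|rx-\zeta|^{n+b+c}}\,dr\,d\sigma(\zeta);$$
the case $x=0$ is treated separately, where both sides reduce to bounded constants. Since $(1+r)^b$ is bounded above and below on $[0,1]$, I would then apply Lemma \ref{LInt01} to the inner integral with $a=n-1$ and with the exponent parameter taken to be $c+n-1$ (so that $1+b+(c+n-1)=n+b+c$), obtaining a bound of the form $|x-\zeta|^{-(c+n-1)}$, $1+\log(1/|x-\zeta|)$, or a constant, according as $c+n-1$ is positive, zero, or negative.

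For the outer integration I would invoke the classical spherical estimate
$$\int_\mathbb S\frac{d\sigma(\zeta)}{|x-\zeta|^{s}}
\sim\begin{cases}
(1-|x|)^{n-1-s}, & s>n-1,\\
1+\log\frac{1}{1-|x|}, & s=n-1,\\
1, & s<n-1,
\end{cases}$$
applied with $s=c+n-1$. The trichotomy $s\gtrless n-1$ corresponds precisely to $c\gtrless 0$, so after replacing $1-|x|$ by the comparable $1-|x|^2$ the three asymptotics in the lemma fall out. In the residual subcases $c+n-1\leq 0$ (which force $c<0$) the inner bound is already a constant or logarithm in $|x-\zeta|$, both of which integrate to $O(1)$ over $\mathbb S$, again matching the $c<0$ line.

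The hard part will be that spherical estimate, as it is not supplied in the Preliminaries. I would derive it by parameterizing $\zeta\in\mathbb S$ through its height $t=\langle\zeta,x/|x|\rangle\in[-1,1]$, using $|x-\zeta|^2=1-2|x|t+|x|^2$, and reducing to a one-variable integral that falls under Lemma \ref{LInt01r} after a change of variable such as $u=(1+t)/2$. Alternatively, and more economically, one may quote \cite[Proposition 2.2]{LS} directly, as the author chooses to do.
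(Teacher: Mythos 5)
Your plan is correct. Note that the paper itself supplies no proof of this lemma --- it simply refers to \cite[Proposition 2.2]{LS} (the real analogue of \cite[Proposition 1.4.10]{Ru}) and moves on --- so there is no argument in the paper to compare against; your sketch fills that gap. The bookkeeping is right: the inner radial integral falls under Lemma \ref{LInt01} with exponent parameter $c'=c+n-1$, the resulting spherical exponent is $s=c+n-1$, the trichotomy $s\gtrless n-1$ translates exactly to $c\gtrless 0$, and the residual subcases $s\le 0$ force $c\le 1-n<0$ and integrate to $O(1)$ over $\mathbb S$ (for the lower bound too, since Lemma \ref{LInt01} gives two-sided bounds), so they fold into the $c<0$ line as you say. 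The only ingredient not listed in the Preliminaries is the sphere estimate, and your proposed reduction works; the one detail to watch is a factor of two. Pushing $d\sigma$ to the density $C_n(1-t^2)^{(n-3)/2}\,dt$ in the height $t=\langle\zeta,x/|x|\rangle$ and substituting $u=(1+t)/2$ gives $1-2|x|t+|x|^2=(1+|x|)^2(1-r'u)$ with $r'=4|x|/(1+|x|)^2$, where $1-r'=(1-|x|)^2/(1+|x|)^2\sim(1-|x|)^2$. Lemma \ref{LInt01r} then applies with $b=(n-3)/2>-1$ and parameter $(s-n+1)/2$ (the extra weight $u^{(n-3)/2}$ is bounded and bounded away from $0$ near $u=1$, hence harmless), and because $1-r'\sim(1-|x|)^2$ the exponent it returns gets doubled when converted to powers of $1-|x|$, giving precisely $(1-|x|)^{n-1-s}$, $1+\log\frac{1}{1-|x|}$, or $1$ in the three cases. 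Quoting \cite{LS} directly is of course shorter, as you note.
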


\section{Asymptotic Expansion of the Coefficients of Reproducing Kernels}\label{SA}

The purpose of this section is to obtain an asymptotic expansion of the
coefficient $c_m(\alpha)$ of the reproducing kernel $\mathcal R_\alpha$.
For this we first prove Theorem \ref{TIm} and write $I_m=1/c_m(\alpha)$
as a series.
By \eqref{cm} and \eqref{Valpha}, for all $m\geq 0$,
\begin{equation}\label{ImInt1}
I_m=\frac{1}{B(\frac{n}{2},\alpha+1)}
\int_0^1 r^{m+\frac{n}{2}-1}(1-r)^\alpha\,S_m^2(\sqrt{r})\,dr.
\end{equation}
If $m=0$, then $S_0(r)=1$ and so $I_0$ and $c_0(\alpha)$ equal to $1$.
From now on, we assume $m\geq 1$.

\begin{proof}[Proof of Theorem \ref{TIm}]

Inserting the formula of $S_m(r)$ in \eqref{Smr} into \eqref{ImInt1} we obtain
\begin{equation}\label{Im}
I_m=\frac{B^2(m,\frac{n}{2})}{B(\frac{n}{2},\alpha+1)B^2(m,n-1)}
\int_0^1 r^{m+\frac{n}{2}-1}(1-r)^\alpha\,F^2(m,1-\tfrac{n}{2};m+\tfrac{n}{2};r)\,dr.
\end{equation}
We do not know a simple closed formula for this integral
and we look for an asymptotic expansion.
However in the above integral there are too many terms that depend on $m$ and
our first aim is to manipulate this integral by using the formulas listed
in Section \ref{SP} in such a way that the appearances of $m$
will be reduced.

First, note that by Lemmas \ref{LEulerIdty} and \ref{LF21Integral},
\begin{align*}
F(m,1-\tfrac{n}{2};m+\tfrac{n}{2};r)
&=(1-r)^{n-1}F(\tfrac{n}{2},m+n-1;m+\tfrac{n}{2};r)\\
&=\frac{(1-r)^{n-1}}{B(m,\tfrac{n}{2})}
\int_0^1 t^{\frac{n}{2}-1}(1-t)^{m-1}(1-rt)^{-(m+n-1)}\,dt.
\end{align*}
Inserting this into \eqref{Im} for one of the $F(m,1-\tfrac{n}{2};m+\tfrac{n}{2};r)$
and changing the orders of the integrals (possible by Tonelli's theorem
since the integrand is non-negative by Remark \ref{RF21Pos}), we obtain
\begin{multline*}
I_m=\frac{B(m,\frac{n}{2})}{B(\frac{n}{2},\alpha+1)B^2(m,n-1)}
\int_0^1 t^{\frac{n}{2}-1}(1-t)^{m-1}
\int_0^1
r^{m+\frac{n}{2}-1}(1-r)^{\alpha+n-1}\\
\times (1-rt)^{-(m+n-1)}
F(m,1-\tfrac{n}{2};m+\tfrac{n}{2};r)\,dr dt.
\end{multline*}
Computing the inner integral with Lemma \ref{LIntF21} gives
(with also Remark \ref{RChangeOrder})
\begin{align*}
I_m
=&\frac{B(m,\frac{n}{2})}{B(\frac{n}{2},\alpha+1)B^2(m,n-1)}\,
\frac{\Gamma(m+\frac{n}{2})\Gamma(\alpha+n)\Gamma(\alpha+2n-1)}
{\Gamma(\alpha+\frac{3n}{2})\Gamma(m+\alpha+2n-1)}\\
&\times \int_0^1 t^{\frac{n}{2}-1}(1-t)^{-n}\,
{}_{3}F_{2}\left[
\begin{aligned}
& m+n-1,\,\alpha+n,\,\alpha+2n-1\,\\
& m+\alpha+2n-1,\,\alpha+\tfrac{3n}{2}
\end{aligned};\,\frac{t}{t-1} \right]\,dt.
\end{align*}
Although in the integrand there is the term $(1-t)^{-n}$, the above integral
is convergent.
This is because the ${}_3F_2$ term is positive by Lemma \ref{LF32Prop} (i) and
is dominated by
$(1-t)^{\min\{m+n-1,\,\alpha+n\}}\log^2\bigl(\frac{1}{1-t}\bigr)$ as $t\to 1^-$
(see \cite[p.~570]{BMS}).

We next apply Lemma \ref{LF32Integral} and write the ${}_3F_2$ in the above
integral as a double integral.
After the cancellations we obtain
\begin{align*}
I_m
&=\frac{B(m,\frac{n}{2})}{B(\frac{n}{2},\alpha+1)B^2(m,n-1)}\,
\frac{\Gamma(m+\frac{n}{2})\Gamma(\alpha+2n-1)}
{\Gamma(m+n-1)\Gamma(\alpha+n)\Gamma(\frac{n}{2})}
\int_0^1 t^{\frac{n}{2}-1}(1-t)^{-n}\\
&\times \int_0^1\int_0^1
u^{m+n-2}(1-u)^{\alpha+n-1} v^{\alpha+n-1}(1-v)^{\frac{n}{2}-1}
\bigl(1-uv\tfrac{t}{t-1}\bigr)^{-(\alpha+2n-1)}\,dudvdt.
\end{align*}
Observe that in the integrand there is only one term that depends on $m$
and we can now obtain the asymptotic expansion of $I_m$.
Write
$$1-uv\frac{t}{t-1}=(1-t)^{-1}(1-t+uvt)=(1-t)^{-1}(1-t(1-uv)),$$
and change the orders of the integrals which is possible
since every term in the integrand is non-negative.
Replace also the Beta functions with Gamma functions as in \eqref{BetaF} and simplify.
This shows
\begin{align*}
I_m
=&\frac{\Gamma(\alpha+\tfrac{n}{2}+1)\Gamma(\alpha+2n-1)}
{\Gamma(\tfrac{n}{2})\Gamma^2(n-1)\Gamma(\alpha+1)\Gamma(\alpha+n)}\,
\frac{\Gamma(m+n-1)}{\Gamma(m)}
\int_0^1\int_0^1
u^{m+n-2}(1-u)^{\alpha+n-1}\\
&\times v^{\alpha+n-1}(1-v)^{\frac{n}{2}-1}
\int_0^1 t^{\frac{n}{2}-1}(1-t)^{\alpha+n-1}\bigl(1-t(1-uv)\bigr)^{-(\alpha+2n-1)}\,dtdudv.
\end{align*}
The inner integral is a hypergeometric function by Lemma \ref{LF21Integral}:
\begin{align*}
\int_0^1 t^{\frac{n}{2}-1}(1-t)^{\alpha+n-1}&\bigl(1-t(1-uv)\bigr)^{-(\alpha+2n-1)}\,dt\\
&=\frac{\Gamma(\tfrac{n}{2})\Gamma(\alpha+n)}{\Gamma(\alpha+\tfrac{3n}{2})}\,
F(\tfrac{n}{2},\alpha+2n-1;\alpha+\tfrac{3n}{2};1-uv)\\
&=\frac{\Gamma(\tfrac{n}{2})\Gamma(\alpha+n)}{\Gamma(\alpha+\tfrac{3n}{2})}\,
(uv)^{-n+1}\,F(\alpha+n,1-\tfrac{n}{2};\alpha+\tfrac{3n}{2};1-uv),
\end{align*}
where in the last equality we apply Lemma \ref{LEulerIdty}.
The reason for this last step is to get the parameter $1-\tfrac{n}{2}$ in the
hypergeometric function so that when $n$ is even the sum will terminate.
Thus
\begin{align*}
I_m
&=\frac{\Gamma(\alpha+\tfrac{n}{2}+1)\Gamma(\alpha+2n-1)}
{\Gamma^2(n-1)\Gamma(\alpha+1)\Gamma(\alpha+\tfrac{3n}{2})}\,
\frac{\Gamma(m+n-1)}{\Gamma(m)}\\
&\times\int_0^1\int_0^1
u^{m-1}(1-u)^{\alpha+n-1}\, v^\alpha(1-v)^{\frac{n}{2}-1}
F(\alpha+n,1-\tfrac{n}{2};\alpha+\tfrac{3n}{2};1-uv)\,dudv.
\end{align*}
We next expand the above hypergeometric function in series.
By Remark \ref{RHyp1}, this expansion converges uniformly for $0\leq u,v\leq 1$ and
changing the orders of the series and the integrals shows
\begin{align*}
I_m
&=\frac{\Gamma(\alpha+\tfrac{n}{2}+1)\Gamma(\alpha+2n-1)}
{\Gamma^2(n-1)\Gamma(\alpha+1)\Gamma(\alpha+\tfrac{3n}{2})}\,
\frac{\Gamma(m+n-1)}{\Gamma(m)}\\
&\times \sum_{j=0}^\infty\frac{(\alpha+n)_j(1-\tfrac{n}{2})_j}{(\alpha+\tfrac{3n}{2})_j\,j!}
\int_0^1\int_0^1 u^{m-1}(1-u)^{\alpha+n-1}\, v^\alpha(1-v)^{\frac{n}{2}-1}
(1-uv)^j\,dudv.
\end{align*}
Since $1-uv=(1-v)+v(1-u)$, we have
$(1-uv)^j=\sum_{k=0}^j \binom{j}{k}v^k(1-u)^k(1-v)^{j-k}.$
Inserting this into the above integral gives
\begin{align*}
I_m
=&\frac{\Gamma(\alpha+\tfrac{n}{2}+1)\Gamma(\alpha+2n-1)}
{\Gamma^2(n-1)\Gamma(\alpha+1)\Gamma(\alpha+\tfrac{3n}{2})}\,
\frac{\Gamma(m+n-1)}{\Gamma(m)}\,
\sum_{j=0}^\infty\frac{(\alpha+n)_j(1-\tfrac{n}{2})_j}{(\alpha+\tfrac{3n}{2})_j\, j!}\\
&\times\sum_{k=0}^j \binom{j}{k}
\int_0^1\int_0^1 u^{m-1}(1-u)^{\alpha+n-1+k}\, v^{\alpha+k}(1-v)^{\frac{n}{2}-1+j-k}\,dudv.
\end{align*}
We compute the integrals with \eqref{BetaF} and after the cancellation obtain
\begin{align*}
I_m
=&\frac{\Gamma(\alpha+\tfrac{n}{2}+1)\Gamma(\alpha+2n-1)}
{\Gamma^2(n-1)\Gamma(\alpha+1)\Gamma(\alpha+\tfrac{3n}{2})}\,\Gamma(m+n-1)\\
&\times
\sum_{j=0}^\infty\frac{(\alpha+n)_j(1-\tfrac{n}{2})_j}{(\alpha+\tfrac{3n}{2})_j\, j!}\,
\sum_{k=0}^j \binom{j}{k}
\frac{\Gamma(\alpha+n+k)\Gamma(\alpha+1+k)\Gamma(\tfrac{n}{2}+j-k)}
{\Gamma(m+\alpha+n+k)\Gamma(\alpha+\tfrac{n}{2}+1+j)}.
\end{align*}
We change the orders of the sums by using the identity (see \cite[Lemma 10, p.~56]{Ra})
$$\sum_{j=0}^\infty\sum_{k=0}^j f(j,k)
=\sum_{k=0}^\infty\sum_{j=k}^\infty f(j,k)
=\sum_{k=0}^\infty\sum_{j=0}^\infty f(j+k,k).$$
This is possible because except $(1-\tfrac{n}{2})_j$, all the terms in the
sum are positive.
Also if $n$ is even, then  $(1-\tfrac{n}{2})_j=0$ when $j\geq\tfrac{n}{2}$ and
the sum is finite.
If $n$ is odd, then  $(1-\tfrac{n}{2})_j$ has the same sign for every
$j\geq\tfrac{n-1}{2}$ and except for finitely many terms the
summands are either all positive or all negative.
Thus
\begin{align*}
I_m
=&\frac{\Gamma(\alpha+\tfrac{n}{2}+1)\Gamma(\alpha+2n-1)}
{\Gamma^2(n-1)\Gamma(\alpha+1)\Gamma(\alpha+\tfrac{3n}{2})}\,\Gamma(m+n-1)\,
\sum_{k=0}^\infty\frac{\Gamma(\alpha+n+k)\Gamma(\alpha+1+k)}{\Gamma(m+\alpha+n+k)\, k!}\\
&\times
\sum_{j=0}^\infty\frac{(\alpha+n)_{j+k}(1-\tfrac{n}{2})_{j+k}}{(\alpha+\tfrac{3n}{2})_{j+k}\, j!}\,
\frac{\Gamma(\tfrac{n}{2}+j)}{\Gamma(\alpha+\tfrac{n}{2}+1+j+k)}
\end{align*}
By \eqref{Poch}, $\Gamma(\alpha+n+k)=\Gamma(\alpha+n)(\alpha+n)_k$
and a similar identity holds for the other Gamma functions in the summand.
Writing these and doing the cancellations shows
\begin{align*}
I_m
=&\frac{\Gamma(\alpha+2n-1)\Gamma(\alpha+n)\Gamma(\tfrac{n}{2})}
{\Gamma^2(n-1)\Gamma(\alpha+\tfrac{3n}{2})}\,
\frac{\Gamma(m+n-1)}{\Gamma(m+\alpha+n)}
\sum_{k=0}^\infty\frac{(\alpha+n)_k\,(\alpha+1)_k}{(m+\alpha+n)_k\, k!}\\
&\times
\sum_{j=0}^\infty\frac{(\alpha+n)_{j+k}\,(1-\tfrac{n}{2})_{j+k}\,(\tfrac{n}{2})_j}
{(\alpha+\tfrac{3n}{2})_{j+k}\,(\alpha+\tfrac{n}{2}+1)_{j+k}\, j!}.
\end{align*}
We next use the elementary identity $(a)_{j+k}=(a)_k(a+k)_j$ and obtain
\begin{align*}
&{}I_m=
\frac{\Gamma(\alpha+2n-1)\Gamma(\alpha+n)\Gamma(\tfrac{n}{2})}
{\Gamma^2(n-1)\Gamma(\alpha+\tfrac{3n}{2})}\,
\frac{\Gamma(m+n-1)}{\Gamma(m+\alpha+n)}\\
&\sum_{k=0}^\infty\frac{(\alpha+n)_k\,(\alpha+1)_k\,(\alpha+n)_k\,(1-\tfrac{n}{2})_k}
{(m+\alpha+n)_k\,(\alpha+\tfrac{3n}{2})_k\,(\alpha+\tfrac{n}{2}+1)_k\, k!}
\sum_{j=0}^\infty\frac{(\alpha+n+k)_j\,(1-\tfrac{n}{2}+k)_j\,(\tfrac{n}{2})_j}
{(\alpha+\tfrac{3n}{2}+k)_j\,(\alpha+\tfrac{n}{2}+1+k)_j\, j!}.
\end{align*}
Observe that the inner sum is a ${}_3F_2$ evaluated at $z=1$.
We transform this sum by Lemma \ref{LKummer}.
\begin{align*}
\sum_{j=0}^\infty
\frac{(\alpha+n+k)_j\,(1-\tfrac{n}{2}+k)_j\,(\tfrac{n}{2})_j}
{(\alpha+\tfrac{3n}{2}+k)_j\,(\alpha+\tfrac{n}{2}+1+k)_j\, j!}
&=\,
{}_3F_2\left[
\begin{aligned}
& \tfrac{n}{2},\,1-\tfrac{n}{2}+k,\,\alpha+n+k\\
& \alpha+\tfrac{n}{2}+1+k,\,\alpha+\tfrac{3n}{2}+k
\end{aligned}\,;
1\right]\\
&\hspace{-2.5cm}
=\frac{\Gamma(\alpha+\tfrac{3n}{2}+k)\Gamma(\alpha+n)}
{\Gamma(\alpha+n+k)\Gamma(\alpha+\tfrac{3n}{2})}\,
{}_3F_2\left[
\begin{aligned}
& \tfrac{n}{2},\,\alpha+n,\,1-\tfrac{n}{2}\\
& \alpha+\tfrac{n}{2}+1+k,\,\alpha+\tfrac{3n}{2}
\end{aligned}\,;
1\right]\\
&\hspace{-2.5cm}=
\frac{(\alpha+\tfrac{3n}{2})_k}{(\alpha+n)_k}\,
{}_3F_2\left[
\begin{aligned}
& \tfrac{n}{2},\,\alpha+n,\,1-\tfrac{n}{2}\\
& \alpha+\tfrac{n}{2}+1+k,\,\alpha+\tfrac{3n}{2}
\end{aligned}\,;
1\right],
\end{align*}
where in the last equality we use \eqref{Poch}.
We apply Lemma \ref{LKummer} for two reasons.
First to obtain $1-\tfrac{n}{2}$ as an upper parameter in ${}_3F_2$ so that
the sum will terminate when $n$ is even.
Second there will be some cancellations and formula for $I_m$ will simplify.
After the cancellations we obtain
\begin{align*}
I_m=&
\frac{\Gamma(\alpha+2n-1)\Gamma(\alpha+n)\Gamma(\tfrac{n}{2})}
{\Gamma^2(n-1)\Gamma(\alpha+\tfrac{3n}{2})}\,
\frac{\Gamma(m+n-1)}{\Gamma(m+\alpha+n)}\\
&\times
\sum_{k=0}^\infty\frac{(\alpha+1)_k\,(\alpha+n)_k\,(1-\tfrac{n}{2})_k}
{(m+\alpha+n)_k\,(\alpha+\tfrac{n}{2}+1)_k\, k!}\,
{}_3F_2\left[
\begin{aligned}
& \tfrac{n}{2},\,\alpha+n,\,1-\tfrac{n}{2}\\
& \alpha+\tfrac{n}{2}+1+k,\,\alpha+\tfrac{3n}{2}
\end{aligned}\,;1\right].
\end{align*}
This shows that $I_m$ can be written in the form \eqref{ImSeries}
and finishes the proof.
\end{proof}

We remark about the case $n$ is even.
In this case $A_k=0$ for $k\geq\tfrac{n}{2}$ because of
the term $(1-\tfrac{n}{2})_k$ and therefore the sum
in \eqref{ImSeries} is finite and is a rational function of $m$.
For $0\leq k<\tfrac{n}{2}$, the coefficients $A_k$ can also be computed
in finite steps since the ${}_3F_2$ in \eqref{Ak} terminates.

The exact values of the constants $A_k$ will not be important for us.
Nevertheless let us find a closed formula for the dominating term $A_0$.
When $k=0$, we can compute the ${}_3F_2$ in \eqref{Ak} with Lemma \ref{LDixon},
where we permute the upper parameters and take $a=\alpha+n$.
$${}_3F_2\left[
\begin{aligned}
& \alpha+n,\,\tfrac{n}{2},\,1-\tfrac{n}{2}\\
& \alpha+\tfrac{n}{2}+1,\,\alpha+\tfrac{3n}{2}
\end{aligned}\,;1\right]
=\frac{\Gamma(\tfrac{\alpha+n}{2}+1)\Gamma(\alpha+\tfrac{n}{2}+1)
\Gamma(\alpha+\tfrac{3n}{2})\Gamma(\tfrac{\alpha+n}{2})}
{\Gamma(\alpha+n+1)\Gamma(\tfrac{\alpha}{2}+1)\Gamma(\tfrac{\alpha}{2}+n)\Gamma(\alpha+n)}.
$$
After cancellations we obtain
\begin{equation}\label{A0}
A_0
=\frac{\Gamma(\tfrac{n}{2})\Gamma(\alpha+\tfrac{n}{2}+1)\Gamma(\alpha+2n-1)
\Gamma(\tfrac{\alpha+n}{2})\Gamma(\tfrac{\alpha+n}{2}+1)}
{\Gamma^2(n-1)\Gamma(\alpha+n+1)\Gamma(\tfrac{\alpha}{2}+1)\Gamma(\tfrac{\alpha}{2}+n)}.
\end{equation}

To estimate the reproducing kernel $\mathcal R_\alpha$, only finite part of the
series in Theorem \ref{TIm} will be sufficient.
In the next corollary we estimate the remainder part.

\begin{corollary}\label{CAsymA}
For $K=1,2,\dots$,
$$I_m
=\frac{\Gamma(m+n-1)}{\Gamma(m+\alpha+n)}
\Biggl(\sum_{k=0}^{K-1}\frac{A_k}{(m+\alpha+n)_k}+O\Bigl(\frac{1}{m^K}\Bigr)\Biggr)
\qquad (m\to\infty),$$
where $A_k$ is as in \eqref{Ak}.
The implied constant of the $O$ term depends only on $\alpha,n,K$.
\end{corollary}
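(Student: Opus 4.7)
The plan is to extract the asymptotic expansion directly from the exact series of Theorem \ref{TIm}. Writing $I_m=\Gamma(m+n-1)\Sigma(m)/\Gamma(m+\alpha+n)$ with $\Sigma(m):=\sum_{k=0}^\infty A_k/(m+\alpha+n)_k$, it suffices to show
$$\sum_{k=K}^\infty\frac{A_k}{(m+\alpha+n)_k}=O(m^{-K})\qquad(m\to\infty).$$
Factoring $(m+\alpha+n)_k=(m+\alpha+n)_K\,(m+\alpha+n+K)_{k-K}$ and reindexing $k=K+j$ yields
$$\sum_{k=K}^\infty\frac{A_k}{(m+\alpha+n)_k}=\frac{\Phi_K(m)}{(m+\alpha+n)_K},\qquad\Phi_K(m):=\sum_{j=0}^\infty\frac{A_{K+j}}{(m+\alpha+n+K)_j}.$$
Since $(m+\alpha+n)_K\sim m^K$ by \eqref{Stirling}, the proof reduces to uniform boundedness of $|\Phi_K(m)|$ on $m\geq 1$.

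The first step is to control the size of $A_k$. After permuting upper parameters (Remark \ref{RChangeOrder}), the ${}_3F_2$ factor in \eqref{Ak} satisfies the hypotheses of Lemma \ref{LF32Prop}(iii) with $a=n/2$, $b=\alpha+n$, $c=1-n/2<0$ upstairs and $d=\alpha+n/2+1+k$, $e=\alpha+3n/2$ downstairs; the positivity and ordering checks $\alpha+n/2+1+k>n/2>0$ and $\alpha+3n/2>\alpha+n>0$ are immediate from $\alpha>-1$, $n\geq 3$, $k\geq 0$. Hence this factor lies in $[0,1]$ uniformly in $k$, and a routine Stirling estimate on the remaining Pochhammer ratio yields the crude bound
$$|A_k|\lesssim k!\,k^{\alpha-1}\qquad(k\to\infty).$$

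Since $(m+\alpha+n+K)_j$ is increasing in $m$, for every $m\geq 1$ and every $j\geq 0$,
$$\frac{|A_{K+j}|}{(m+\alpha+n+K)_j}\leq\frac{|A_{K+j}|}{(1+\alpha+n+K)_j}\lesssim j^{-(n+1)},$$
where the last step follows from Stirling applied to $|A_{K+j}|\lesssim(K+j)!(K+j)^{\alpha-1}$ and $(1+\alpha+n+K)_j\sim j!\,j^{\alpha+n+K}$. Since $n\geq 3$, the majorant series converges, so dominated convergence gives $|\Phi_K(m)|\leq C_K$ uniformly in $m\geq 1$. Combined with $(m+\alpha+n)_K\sim m^K$, this completes the proof. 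The main technical point is that, although the coefficients $A_k$ grow factorially, the accompanying Pochhammer denominators grow faster in $j$; the qualitative input that makes this visible is precisely the uniform boundedness ${}_3F_2[\cdots;1]\in[0,1]$ supplied by Lemma \ref{LF32Prop}(iii).
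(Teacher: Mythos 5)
Your proof is correct and follows essentially the same route as the paper's: both bound the ${}_3F_2[\cdots;1]$ factor by Lemma~\ref{LF32Prop}(iii), use a Stirling estimate on the remaining Pochhammer ratio, factor out $(m+\alpha+n)_K\sim m^K$, and bound the residual tail sum uniformly in $m$ by a convergent series (the paper compares $(\alpha+n+K)_j\le(m+\alpha+n+K)_j$ and sums $k^{-n}$, whereas you compare $(m+\alpha+n+K)_j\ge(1+\alpha+n+K)_j$ and sum $j^{-(n+1)}$ — a cosmetic difference only).
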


\begin{proof}
The remainder term in the series in \eqref{ImSeries} is
\begin{align*}
R_K&:=\sum_{k=K}^\infty \frac{A_k}{(m+\alpha+n)_k}\\
&= C\sum_{k=K}^\infty\frac{(\alpha+1)_k\,(\alpha+n)_k\,(1-\tfrac{n}{2})_k}
{(m+\alpha+n)_k\,(\alpha+\tfrac{n}{2}+1)_k\, k!}\,
{}_3F_2\left[
\begin{aligned}
& \tfrac{n}{2},\,\alpha+n,\,1-\tfrac{n}{2}\\
& \alpha+\tfrac{n}{2}+1+k,\,\alpha+\tfrac{3n}{2}
\end{aligned}\,;1\right],
\end{align*}
where $C$ is a constant that depends only on $\alpha$ and $n$.
To estimate $R_K$, first note that by Lemma \ref{LF32Prop} (iii),
for every $k\geq 1$, we have
$$0\leq{}_3F_2\left[
\begin{aligned}
& \tfrac{n}{2},\,\alpha+n,\,1-\tfrac{n}{2}\\
& \alpha+\tfrac{n}{2}+1+k,\,\alpha+\tfrac{3n}{2}
\end{aligned}\,;1\right]\leq 1.$$
Next when $n$ is odd, by \eqref{Stirling}, there exists a constant $C$
depending only on $\alpha$ and $n$ such that for every $k\geq 1$,
$$\left|\frac{(\alpha+1)_k\,(1-\tfrac{n}{2})_k}{(\alpha+\tfrac{n}{2}+1)_k\, k!}\right|
\leq\frac{C}{k^n}.$$
The above estimate is certainly true when $n$ is even since left-hand side
vanishes for $k\geq\frac{n}{2}$.
Thus
\begin{align*}
|R_K|&\leq C \sum_{k=K}^\infty\frac{(\alpha+n)_k}{(m+\alpha+n)_k}\frac{1}{k^n}
=C\frac{(\alpha+n)_K}{(m+\alpha+n)_K}
\sum_{k=K}^\infty\frac{(\alpha+n+K)_{k-K}}{(m+\alpha+n+K)_{k-K}}\frac{1}{k^n}\\
&\leq C\frac{(\alpha+n)_K}{(m+\alpha+n)_K}\sum_{k=1}^\infty\frac{1}{k^n}
=C\frac{(\alpha+n)_K}{(m+\alpha+n)_K},
\end{align*}
since $(\alpha+n+K)_j\leq(m+\alpha+n+K)_j$ and $\sum_{k=1}^\infty 1/k^n$ converges.
Finally, because $(m+\alpha+n)_K\geq m^K$, we conclude that there exists a constant $C$
depending only on $\alpha,n$ and $K$ but independent of $m$ such that
$|R_K|\leq C/m^K$.
\end{proof}

Corollary \ref{CAsymA} shows that the series \eqref{ImSeries} is a convergent
asymptotic series
$$I_m\approx\frac{\Gamma(m+n-1)}{\Gamma(m+\alpha+n)}
\sum_{k=0}^\infty\frac{A_k}{(m+\alpha+n)_k}
\qquad (m\to\infty).$$
Our main interest is in $c_m(\alpha)=1/I_m$ and to invert $I_m$, we next write the
above asymptotic expansion in powers of $1/m$.
As a function of $m$, $(m+\alpha+n)_j$ is a polynomial of degree $j$ and
for $j\geq 1$, the rational function $1/(m+\alpha+n)_j$ has the Laurent series expansion
\begin{equation}\label{laurent}
\frac{1}{(m+\alpha+n)_j}=\sum_{k=j}^\infty \frac{C_k(j)}{m^k}
\qquad (m>\alpha+n+j-1),
\end{equation}
where $C_k(j)$ depends only on $\alpha$ and $n$.
When $j=0$, $(m+\alpha+n)_0=1$ and we set $C_0(0)=1$ and $C_k(0)=0$ for $k\geq 1$.
An explicit formula for the coefficients $C_k(j)$ can be written
(the partial fraction expansion of $1/(m+\alpha+n)_j$ has a simple form)
but we will not do this since exact values of these coefficients are not
important for our purposes.
For $k=0,1,2,\dots,$ define
\begin{equation}\label{Bk}
B_k=\sum_{j=0}^k A_jC_k(j).
\end{equation}
In particular $B_0=A_0$ with $A_0$ as in \eqref{A0}.

\begin{corollary}\label{CAsymB}
There exist constants $B_k$ $(k=0,1,2,\dots)$ depending only on $\alpha$ and $n$
such that for every $K=1,2,\dots$,
$$I_m
=\frac{\Gamma(m+n-1)}{\Gamma(m+\alpha+n)}
\Biggl(\sum_{k=0}^{K-1}\frac{B_k}{m^k}+O\Bigl(\frac{1}{m^K}\Bigr)\Biggr)
\qquad (m\to\infty).$$
The constants $B_k$ are given in \eqref{Bk}.
The implied constant of the $O$ term depends only on $\alpha,n,K$.
\end{corollary}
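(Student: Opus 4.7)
The plan is to substitute the Laurent expansion \eqref{laurent} into the finite expansion provided by Corollary \ref{CAsymA} and then regroup the result by powers of $1/m$. The starting point is
$$I_m=\frac{\Gamma(m+n-1)}{\Gamma(m+\alpha+n)}\Biggl(\sum_{j=0}^{K-1}\frac{A_j}{(m+\alpha+n)_j}+O\Bigl(\frac{1}{m^K}\Bigr)\Biggr),$$
which is already in hand. No further manipulation of the integral representation of $I_m$ is required; the remaining work is purely algebraic.

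For each fixed $j$ with $0\leq j\leq K-1$, I would truncate \eqref{laurent} at order $K$, writing
$$\frac{1}{(m+\alpha+n)_j}=\sum_{k=j}^{K-1}\frac{C_k(j)}{m^k}+O\Bigl(\frac{1}{m^K}\Bigr).$$
This tail estimate is straightforward: since $(m+\alpha+n)_j$ is a polynomial of degree $j$ with leading term $m^j$, the difference between the exact value and the truncated Laurent sum is, for $m$ large enough, bounded by a constant multiple of $1/m^K$, with constant depending only on $\alpha,n,j,K$. Because only the $K$ values $j=0,1,\ldots,K-1$ appear, aggregating the errors preserves the $O(1/m^K)$ bound with a constant depending only on $\alpha,n,K$.

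Substituting and interchanging the two finite summations (a trivial rearrangement), I obtain
$$\sum_{j=0}^{K-1}\frac{A_j}{(m+\alpha+n)_j}=\sum_{k=0}^{K-1}\frac{1}{m^k}\sum_{j=0}^{k}A_jC_k(j)+O\Bigl(\frac{1}{m^K}\Bigr),$$
where the conventions $C_0(0)=1$ and $C_k(0)=0$ for $k\geq 1$ correctly handle the $j=0$ contribution, and the condition $C_k(j)=0$ for $k<j$ truncates the outer $j$-range to $0\leq j\leq k$. Recognising the inner sum as $B_k$ by definition \eqref{Bk} and inserting back into the formula for $I_m$ yields the asserted expansion, with the identification $B_0=A_0$ following from $C_0(0)=1$.

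The proof carries no substantive analytic difficulty beyond what Corollary \ref{CAsymA} already supplies; the only point requiring any care is verifying the uniform $O(1/m^K)$ tail bound for the Laurent expansion and confirming that the finite-index swap respects this error. Both are routine, so I expect the entire argument to be short and essentially bookkeeping.
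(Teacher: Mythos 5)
Your proposal is correct and follows essentially the same route as the paper's own proof: substitute the truncated Laurent expansion \eqref{laurent} into Corollary \ref{CAsymA}, swap the two finite sums, and identify the inner sum with $B_k$ via \eqref{Bk}. The only addition you make is to spell out the uniformity of the $O(1/m^K)$ tail for each $j$, which the paper leaves implicit but is indeed routine.
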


\begin{proof}
Fix $K$.
For $j=0,1,2,\dots,K-1$, by the Laurent series in \eqref{laurent}, we have
$$\frac{1}{(m+\alpha+n)_j}=
\sum_{k=j}^{K-1}\frac{C_k(j)}{m^k}+O\Bigl(\frac{1}{m^K}\Bigr)\qquad (m\to\infty).$$
Using Corollary \ref{CAsymA} and changing the orders of the sums we obtain
\begin{align*}
I_m
&=\frac{\Gamma(m+n-1)}{\Gamma(m+\alpha+n)}
\Biggl(\sum_{j=0}^{K-1}A_j\sum_{k=j}^{K-1}\frac{C_k(j)}{m^k}
+O\Bigl(\frac{1}{m^K}\Bigr)\Biggr)\\
&=\frac{\Gamma(m+n-1)}{\Gamma(m+\alpha+n)}
\Biggl(\sum_{k=0}^{K-1}\frac{1}{m^k}\sum_{j=0}^{k}A_jC_k(j)
+O\Bigl(\frac{1}{m^K}\Bigr)\Biggr)
\qquad (m\to\infty),
\end{align*}
which is the desired result by \eqref{Bk}.
\end{proof}

Once we have the asymptotic expansion of $I_m$ in powers of $1/m$, it
is straightforward to write the asymptotic expansion of $c_m(\alpha)=1/I_m$
(see, for example, \cite[p.~20]{O}).
Let $D_0=1/B_0$ (note that $B_0=A_0\neq 0$) and for $k\geq 1$, define $D_k$
recursively by the equation
\begin{equation}\label{Dk}
D_kB_0+D_{k-1}B_1+\dots+D_0B_k=0.
\end{equation}

\begin{corollary}\label{CAsymD}
There exist constants $D_k$ $(k=0,1,2,\dots)$ with $D_0>0$, depending only
on $\alpha$ and $n$ such that for every $K=1,2,\dots$,
$$c_m(\alpha)
=\frac{\Gamma(m+\alpha+n)}{\Gamma(m+n-1)}
\Biggl(\sum_{k=0}^{K-1}\frac{D_k}{m^k}+O\Bigl(\frac{1}{m^K}\Bigr)\Biggr)
\qquad (m\to\infty).$$
The constants $D_k$ ($k\geq 1$) are determined by \eqref{Dk}.
The implied constant of the $O$ term depends only on $\alpha,n,K$.
\end{corollary}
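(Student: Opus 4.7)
The plan is to invert the asymptotic expansion supplied by Corollary \ref{CAsymB}. Writing $I_m = \frac{\Gamma(m+n-1)}{\Gamma(m+\alpha+n)}\,J_m$, one has
$$J_m = \sum_{k=0}^{K-1}\frac{B_k}{m^k} + O\!\left(\frac{1}{m^K}\right)\qquad(m\to\infty),$$
and $c_m(\alpha) = \frac{\Gamma(m+\alpha+n)}{\Gamma(m+n-1)}\cdot\frac{1}{J_m}$. It will therefore suffice to prove that $1/J_m$ itself admits an asymptotic expansion of the same shape with coefficients exactly the $D_k$ defined in the statement.

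The first step I would carry out is to check the positivity of $B_0=A_0$. Inspection of the closed form \eqref{A0} shows that every Gamma factor there has positive argument (using $\alpha>-1$ and $n\geq 3$), so $B_0>0$; consequently $D_0=1/B_0$ is well-defined and positive, and since $J_m\to B_0$ we have $J_m\geq B_0/2$ for all sufficiently large $m$, so $1/J_m$ is bounded on this range.

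The main step is then to fix $K$ and compare $1/J_m$ with the partial sum $Q_K(m):=\sum_{k=0}^{K-1}D_k/m^k$. Setting also $P_K(m):=\sum_{k=0}^{K-1}B_k/m^k$, the key observation is that the Cauchy product $P_K(m)Q_K(m)$, viewed as a polynomial in $1/m$ modulo $O(1/m^K)$, has the coefficient $\sum_{j=0}^{k}D_jB_{k-j}$ at $1/m^k$ for $0\leq k\leq K-1$. By the normalization $D_0B_0=1$ and the recursion \eqref{Dk}, this sum equals $1$ when $k=0$ and vanishes for $1\leq k\leq K-1$, so $P_K(m)Q_K(m)=1+O(1/m^K)$. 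Because $Q_K(m)$ is bounded for $m$ large, replacing $P_K(m)$ by $J_m$ (which differs by $O(1/m^K)$) preserves this identity, giving $J_m Q_K(m)=1+O(1/m^K)$. Finally,
$$\frac{1}{J_m}-Q_K(m)=\frac{1-J_m Q_K(m)}{J_m}=O\!\left(\frac{1}{m^K}\right),$$
using $J_m\geq B_0/2$, and multiplying through by $\Gamma(m+\alpha+n)/\Gamma(m+n-1)$ yields the statement.

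There is no real obstacle beyond keeping orders of magnitude straight: the argument is the standard reciprocal-of-an-asymptotic-series computation, and its entire content is the recognition that the recursion \eqref{Dk} is exactly the one forced by matching powers of $1/m$ in the formal product $J_m\cdot(1/J_m)=1$. The positivity of $A_0$ is the only point that is not purely formal, but it is immediate from \eqref{A0}.
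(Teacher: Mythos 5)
Your argument is correct and is precisely the standard reciprocal-of-an-asymptotic-series computation that the paper invokes by citing Olver's book rather than writing out; the recursion \eqref{Dk} is indeed exactly the Cauchy-product matching condition you identify, and your check that $A_0>0$ (all Gamma arguments in \eqref{A0} are positive since $\alpha>-1$, $n\geq 3$) supplies the positivity of $D_0$ that the paper asserts without comment. In short, you have simply filled in the details the paper leaves to the reference.
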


If we take $K=1$ in the above Corollary we obtain \eqref{Kis1}, since
$D_0=1/A_0$ with $A_0$ given in \eqref{A0}.

\section{Upper Estimates of Reproducing Kernels}\label{SE}

For a multi-index $\lambda=(\lambda_1,\dots,\lambda_n)$, where $\lambda_i$
are non-negative integers, and for a smooth function $f$ we write
$$\partial^\lambda f(x)=
\frac{\partial^{|\lambda|}}{\partial x_1^{\lambda_1}\dots\partial x_n^{\lambda_n}},$$
where $|\lambda|=\lambda_1+\dots+\lambda_n$.
The following estimate of the Euclidean Poisson kernel
$P(x,\eta)=\sum_{m=0}^\infty Z_m(x,\eta)$ is well-known (see, for example, \cite{K}).

\begin{lemma}\label{LPoisson}
Let $\lambda=(\lambda_1,\dots,\lambda_n)$ be a multi-index.
There exists a constant $C>0$ depending only on $\lambda$ and $n$ such that
for all $x\in\mathbb B$ and $\eta\in\mathbb S$,
$$|\partial^\lambda P(x,\eta)|\leq \frac{C}{|x-\eta|^{n-1+|\lambda|}}.$$
\end{lemma}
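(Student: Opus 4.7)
The plan is to work directly from the explicit formula for the Euclidean Poisson kernel,
$$P(x,\eta) = \frac{1-|x|^2}{|x-\eta|^n}\qquad (x\in\mathbb B,\,\eta\in\mathbb S),$$
and estimate derivatives by the Leibniz rule applied to the factorization into $(1-|x|^2)$ and $|x-\eta|^{-n}$.

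First I would establish, by induction on $|\mu|$, the auxiliary estimate: for every positive integer $k$ and every multi-index $\mu$ there is a constant $C$ depending only on $k,\mu,n$ such that
$$\bigl|\partial^\mu |x-\eta|^{-k}\bigr|\leq \frac{C}{|x-\eta|^{k+|\mu|}}\qquad (x\in\mathbb B,\,\eta\in\mathbb S).$$
This is immediate from $\partial_i|x-\eta|^{-k}=-k(x_i-\eta_i)|x-\eta|^{-k-2}$, where the factor $(x_i-\eta_i)/|x-\eta|$ is bounded by $1$; each successive derivative raises the exponent in the denominator by exactly one while producing only bounded extra factors.

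Next I apply the Leibniz rule to $P(x,\eta)=(1-|x|^2)\cdot|x-\eta|^{-n}$. Because $\partial^\mu(1-|x|^2)$ vanishes for $|\mu|\geq 3$ and is bounded for $|\mu|\in\{1,2\}$, only the three cases $|\mu|=0,1,2$ contribute to $\partial^\lambda P(x,\eta)$. In the case $|\mu|=0$ I use the elementary inequality $1-|x|^2=(1-|x|)(1+|x|)\leq 2|x-\eta|$ together with the auxiliary estimate, which yields a contribution of order $C/|x-\eta|^{n-1+|\lambda|}$. For $|\mu|=1$ the factor $\partial^\mu(1-|x|^2)=-2x_i$ is bounded and $|\partial^{\lambda-\mu}|x-\eta|^{-n}|\leq C/|x-\eta|^{n+|\lambda|-1}$ directly. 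Finally for $|\mu|=2$ the factor $\partial^\mu(1-|x|^2)$ is a constant, and the auxiliary estimate gives $|\partial^{\lambda-\mu}|x-\eta|^{-n}|\leq C/|x-\eta|^{n+|\lambda|-2}$; using $|x-\eta|\leq 2$, one trades a power of the distance for a constant to bound this by $C/|x-\eta|^{n+|\lambda|-1}$.

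There is no real obstacle here; the argument is a routine Leibniz-plus-induction calculation. The only mildly delicate point is the case $|\mu|=2$, where the single extra power of $|x-\eta|$ supplied by $1-|x|^2$ has been exhausted by differentiation, and one must invoke the uniform bound $|x-\eta|\leq 2$ to absorb the lost power into the constant $C$.
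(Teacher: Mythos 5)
The paper does not prove this lemma; it treats it as standard and cites Krantz \cite{K}. Your direct computation from the closed form $P(x,\eta)=(1-|x|^2)/|x-\eta|^n$ is a valid, self-contained proof. (You should note, at least implicitly, that this closed form coincides with the series $\sum_m Z_m(x,\eta)$ used in the paper's definition — a standard fact.) The factorization, the observation that $\partial^\mu(1-|x|^2)=0$ for $|\mu|\geq 3$, the inequality $1-|x|^2\leq 2|x-\eta|$ in the $|\mu|=0$ case, and the absorption via $|x-\eta|\leq 2$ in the $|\mu|=2$ case are all correct.

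One point worth tightening: your stated induction for the auxiliary bound $|\partial^\mu|x-\eta|^{-k}|\lesssim|x-\eta|^{-k-|\mu|}$ is not quite an induction as written, since the bound on $|\partial^\mu|x-\eta|^{-k}|$ alone does not let you differentiate once more and conclude anything (you cannot differentiate an inequality). The clean version is a \emph{structural} induction: show that $\partial^\mu|x-\eta|^{-k}$ is a finite linear combination of terms $(x-\eta)^\nu/|x-\eta|^{k+|\mu|+|\nu|}$, which is preserved under one more differentiation, and then bound each term using $|(x-\eta)^\nu|\leq|x-\eta|^{|\nu|}$. This is what the phrase "each derivative raises the exponent by one while producing only bounded extra factors" is gesturing at, but the induction hypothesis needs to be the structural statement, not the estimate. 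This is a bookkeeping issue, not a substantive gap.
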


Using Lemma \ref{LPoisson} it is easy to estimate
$\sum_{m=1}^\infty mZ_m(x,\eta)=\tfrac{d}{dr}P(rx,\eta)\big\rvert_{r=1}$ and
applying the same argument repeatedly shows (for $k=1,2,\dots$)
\begin{equation}\label{mkEst}
\Bigl|\sum_{m=1}^\infty m^k Z_m(x,\eta)\Bigr|
\lesssim \frac{1}{|x-\eta|^{n-1+k}}.
\end{equation}
We need a more general version of the above estimate.

\begin{proposition}\label{PEstW}
Let $a_i,b_i\geq 0\ (i=1,2,\dots,k)$ and for $x\in\mathbb B,\eta\in\mathbb S$, let
$$W(x,\eta)=\sum_{m=1}^\infty\frac{\Gamma(m+a_1)}{\Gamma(m+b_1)}
\cdots\frac{\Gamma(m+a_k)}{\Gamma(m+b_k)}\,Z_m(x,\eta).$$
For a multi-index $\lambda$, set
$c=n-1+(a_1-b_1)+\cdots+(a_k-b_k)+|\lambda|$.
There exists a constant $C>0$ depending only on $a_i,b_i,n$ and $\lambda$
such that for all $x\in\mathbb B$ and $\eta\in\mathbb S$,
$$\bigl|\partial^\lambda W(x,\eta)\bigr|
\leq C\begin{cases}
\dfrac{1}{|x-\eta|^{c}},&\text{if $c>0$};\\
1+\log\dfrac{1}{|x-\eta|},&\text{if $c=0$};\\
1,&\text{if $c<0$}.
\end{cases}$$
\end{proposition}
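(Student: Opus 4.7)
The plan is to induct on $k$, peeling off one Gamma-ratio factor at a time via a Beta-integral representation and reducing to the Poisson-kernel estimate in Lemma \ref{LPoisson} combined with Lemma \ref{LInt01}. The base case $k=0$ gives $W(x,\eta) = P(x,\eta) - 1$, whose derivatives are bounded directly by Lemma \ref{LPoisson}, yielding the claim with $c = n - 1 + |\lambda| > 0$.

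\textbf{Preliminary reduction.} Before the induction, I would reduce to the case $a_i < b_i$ for every $i$. Whenever $a_i \ge b_i$, write
$$\frac{\Gamma(m+a_i)}{\Gamma(m+b_i)} = (m+b_i)(m+b_i+1)\cdots(m+b_i+N_i-1)\cdot\frac{\Gamma(m+a_i)}{\Gamma(m+b_i+N_i)}$$
with $N_i$ large enough that $b_i + N_i > a_i$. Collecting the polynomial factors into a single polynomial $q(m)$ of degree $N_{\mathrm{tot}}$ and using Euler's homogeneity identity $(x\cdot\nabla_x)Z_m(x,\eta) = m Z_m(x,\eta)$, one has $W = q(x\cdot\nabla_x)\tilde W$, where $\tilde W$ is the series with shifted parameters all satisfying $\tilde a_i < \tilde b_i$. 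Then $\partial^\lambda W$ expands as a finite sum of expressions $p_\sigma(x)\,\partial^\sigma \tilde W$ with $|\sigma| \le |\lambda|+N_{\mathrm{tot}}$ and polynomial coefficients $p_\sigma$ bounded on $\mathbb B$. Each term has effective parameter $c_\sigma = c + (|\sigma| - |\lambda| - N_{\mathrm{tot}}) \le c$, with equality at the maximal $|\sigma|$, so summing reproduces the proposition's bound with exponent $c$.

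\textbf{Inductive step.} Assume the result for $k-1$ factors, all with $a_i < b_i$. Peel off the $k$-th factor via
$$\frac{\Gamma(m+a_k)}{\Gamma(m+b_k)} = \frac{1}{\Gamma(b_k-a_k)}\int_0^1 t^{m+a_k-1}(1-t)^{b_k-a_k-1}\,dt.$$
Interchanging sum and integral (justified by the geometric bound $|Z_m(y,\eta)|\lesssim |y|^m m^{n-2}$ for $|y| < 1$) and using $Z_m(tx,\eta) = t^m Z_m(x,\eta)$, one obtains
$$W(x,\eta) = \frac{1}{\Gamma(b_k-a_k)}\int_0^1 t^{a_k-1}(1-t)^{b_k-a_k-1}\,W_{k-1}(tx,\eta)\,dt,$$
where $W_{k-1}$ is the analog with $k-1$ Gamma ratios. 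Differentiating under the integral (chain rule yields a $t^{|\lambda|}$ factor) gives
$$\partial^\lambda W(x,\eta) = \frac{1}{\Gamma(b_k-a_k)}\int_0^1 t^{a_k-1+|\lambda|}(1-t)^{b_k-a_k-1}\,(\partial^\lambda W_{k-1})(tx,\eta)\,dt.$$
The inductive hypothesis bounds the inner factor using parameter $c' = c + (b_k-a_k) > c$, and then Lemma \ref{LInt01} (with $a = a_k-1+|\lambda|$, $b = b_k-a_k-1$, $c_L = c$, so that $1 + b + c_L = c'$) completes the estimate. A quick case analysis on the signs of $c$ and $c'$ reproduces the three alternatives in the proposition; in particular, when $c' \le 0$ the inductive bound is a log or a constant, and the resulting $t$-integral is still $O(1)$, consistent with $c < 0$.

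\textbf{Main obstacle.} The most delicate part is the preliminary reduction: converting polynomial-in-$m$ prefactors into radial differential operators via Euler's identity creates many terms that must recombine compatibly with the final case analysis, and the degenerate situation $a_k = |\lambda| = 0$ (where the exponent $a = -1$ sits at the boundary of Lemma \ref{LInt01}) must be handled separately, by exploiting $W_{k-1}(0,\eta) = 0$ to cancel the $t^{-1}$ singularity. The boundary cases $c = 0$ or $c' = 0$ of Lemma \ref{LInt01} also require a little care but are routine.
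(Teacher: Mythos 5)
Your proposal is correct, and it reaches the estimate by a route that genuinely reorganizes the paper's argument even though both rest on the same two ingredients. The paper interleaves two moves at \emph{each} level of the induction on $k$: a radial-differentiation step (Lemma \ref{LEstb}, which differentiates $r^{b+l-1}P(rx,\eta)$ $l$ times at $r=1$ to produce the polynomial-in-$m$ factor $\Gamma(m+b+l)/\Gamma(m+b)$) followed by a Beta-integral step (Lemma \ref{LEstab}); the auxiliary integer $l$ is chosen per level so that both conditions in \eqref{Defd} hold. You instead do a single global preliminary reduction, rewriting every Gamma ratio with $a_i\ge b_i$ as a polynomial in $m$ times a ratio with $\tilde a_i<\tilde b_i$, and converting those polynomials into the operator $q(x\cdot\nabla_x)$ via Euler's identity $(x\cdot\nabla_x)Z_m=mZ_m$; the induction then consists purely of Beta-integral steps, and the need to choose $l$ disappears since $l=0$ always works once all $\tilde a_i<\tilde b_i$. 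Both approaches bottom out in Lemma \ref{LPoisson}, both evaluate the iterated integrals with Lemma \ref{LInt01}, and both must treat the boundary case $a_k=|\lambda|=0$ separately by splitting the $t$-integral and exploiting that the inner series is $O(t)$ near $t=0$; your proposal flags this correctly. What your reorganization buys is a cleaner induction and no bookkeeping of the parameter $d$ in \eqref{Defd}; what it costs is the combinatorics of commuting $\partial^\lambda$ past $q(x\cdot\nabla_x)$, which produces a finite family of terms $p_\sigma(x)\partial^\sigma\tilde W$ with $|\sigma|\le|\lambda|+N_{\mathrm{tot}}$ whose effective exponents $\tilde c_\sigma$ all satisfy $\tilde c_\sigma\le c$ --- your bookkeeping of that inequality is correct, and since $|x-\eta|\le 2$ the weaker terms absorb into the dominant one, so the recombination you flagged as a potential obstacle is in fact routine.
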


Observe that by \eqref{Stirling}, the coefficient
$\prod_{i=1}^k\Gamma(m+a_i)/\Gamma(m+b_i)\sim m^{\sum_{i=1}^k(a_i-b_i)}$
as $m\to\infty$ and when $|\lambda|=0$, $\,c$ is $(n-1)$ plus the exponent of $m$. So
the above theorem generalizes \eqref{mkEst} by replacing $m^k$ with more
general coefficients that are asymptotic to $m^k$.
If there is differentiation, $c$ increases by the order of the derivative.
In the Euclidean case reproducing kernels $R_\alpha$ of Bergman spaces
(and also the generalized family of Bergman-Besov spaces with $\alpha\in\mathbb R$
in \cite{GKU}) are of the above form.

This proposition, in the form written above, is essentially proved in \cite[Lemma 7.4]{GKU}
following the methods of \cite[Section 3]{CR}, \cite[Chapter 7]{DS},
\cite[Section 2]{JP1}, \cite[Section 3]{LSR}) and \cite[Section 2]{Per}.
However, there is one detail that needs to be mentioned.
In \cite{GKU}, the summation starts from $m=0$ and therefore
$a_i,b_i$ are taken to be strictly positive.
Since the case $a_i$ or $b_i$ equal to 0 will be important for us and will be
repeatedly used later we give a proof of Proposition \ref{PEstW} below omitting
the steps that are already written in detail in \cite{GKU}.
We begin with the following special case.

\begin{lemma}\label{LEstb}
For $b\geq 0$ and $l=0,1,2,\dots$, let $$W(x,\eta)=\sum_{m=1}^\infty\frac{\Gamma(m+b+l)}{\Gamma(m+b)}
\,Z_m(x,\eta)\qquad (x\in\mathbb B, \eta\in\mathbb S).$$
For a multi-index $\lambda$, there exists a constant $C>0$ such that for all $x\in\mathbb B$, $\zeta\in\mathbb S$, $$|(\partial^\lambda W)(x,\eta)|\leq \frac{C}{|x-\eta|^{n-1+l+|\lambda|}}.$$
\end{lemma}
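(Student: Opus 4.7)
The plan is to exploit the fact that the coefficient $\Gamma(m+b+l)/\Gamma(m+b) = (m+b)_l$ is a polynomial in $m$ of degree exactly $l$, so it admits an expansion $(m+b)_l = \sum_{j=0}^{l} c_j\, m^j$ with constants $c_j=c_j(b,l)$ depending only on $b$ and $l$, with $c_l = 1$. Substituting this into $W$ and interchanging summation (legitimate because the homogeneity bound $|Z_m(x,\eta)|\lesssim m^{n-2}|x|^m$ and its analogue for $\partial^\lambda Z_m$ give uniform convergence on compact subsets of $\mathbb B$) yields
$$
W(x,\eta) = \sum_{j=0}^{l} c_j\, W_j(x,\eta), \qquad W_j(x,\eta) := \sum_{m=1}^\infty m^j Z_m(x,\eta).
$$
Hence it is enough to prove
$$
|\partial^\lambda W_j(x,\eta)| \lesssim \frac{1}{|x-\eta|^{n-1+j+|\lambda|}} \qquad (j=0,1,\dots,l),
$$
because $|x-\eta|\leq 2$ then makes each of these bounds dominated by $|x-\eta|^{-(n-1+l+|\lambda|)}$, and summing the finite linear combination yields the claim for $W$.

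The case $j=0$ is immediate: $W_0(x,\eta) = P(x,\eta)-1$, so Lemma \ref{LPoisson} directly bounds $\partial^\lambda W_0$ (the additive constant contributes nothing for $|\lambda|\geq 1$ and is absorbed for $|\lambda|=0$ since $|x-\eta|\leq 2$). For $j\geq 1$, I would use the generating-function identity $P(rx,\eta) = \sum_{m=0}^\infty r^m Z_m(x,\eta)$ together with the observation that $(r\partial_r)^j P(rx,\eta)|_{r=1} = W_j(x,\eta)$. The chain rule $\partial_r P(rx,\eta) = \langle x, (\nabla P)(rx,\eta)\rangle$, iterated $j$ times and evaluated at $r=1$, expresses $W_j(x,\eta)$ as a finite linear combination of terms of the form $x^{\mu}\,(\partial^{\mu} P)(x,\eta)$ with $|\mu|\leq j$. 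Applying $\partial^\lambda$ via the Leibniz rule then yields a finite sum whose typical summand is $x^{\mu'}(\partial^{\mu+\lambda''}P)(x,\eta)$ with $|\mu|+|\lambda''|\leq j+|\lambda|$, and Lemma \ref{LPoisson} bounds each such term by $|x-\eta|^{-(n-1+|\mu|+|\lambda''|)} \lesssim |x-\eta|^{-(n-1+j+|\lambda|)}$, as required.

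Nothing here is particularly delicate: the two technical checkpoints are the legitimacy of term-by-term differentiation (handled by the homogeneity bound on $Z_m$) and the identification of $W_j$ with an explicit $r$-derivative of the Poisson kernel. The real substance of the paper lies in the subsequent estimates on $c_m(\alpha)$ and $\mathcal R_\alpha$; Lemma \ref{LEstb} is a preparatory reduction whose sole role is to absorb the ratio $\Gamma(m+b+l)/\Gamma(m+b)$ that will appear alongside $Z_m$ when the asymptotic expansion of Corollary \ref{CAsymD} is substituted into \eqref{RepKer}.
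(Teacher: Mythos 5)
Your proof is correct and rests on the same core idea as the paper's: realize $W$ (or its pieces) as $r$-derivatives of the Poisson kernel evaluated at $r=1$, then invoke Lemma \ref{LPoisson}. The only difference is bookkeeping: you expand the coefficient $\Gamma(m+b+l)/\Gamma(m+b)=(m+b)_l$ as a polynomial $\sum_{j\le l}c_j m^j$ and treat each $W_j=\sum m^j Z_m=(r\partial_r)^j P|_{r=1}$ separately, whereas the paper multiplies $P(rx,\eta)$ by $r^{b+l-1}$ first, so that a single $\partial_r^l$ evaluated at $r=1$ produces $(m+b)_l$ all at once; both routes reduce to the same estimate on the Poisson kernel and are equally valid.
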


\begin{proof}
If $l=0$, then $W(x,\eta)=P(x,\eta)-1$ since $Z_0(x,\eta)=1$, and the result follows from Lemma \ref{LPoisson}. If $l\geq 1$, consider first $\lambda=(0,\dots,0)$. By homogeneity of $Z_m$, $$r^{b+l-1}P(rx,\eta)=\sum_{m=0}^\infty r^{m+b+l-1}Z_m(x,\eta) =r^{b+l-1}+\sum_{m=1}^\infty r^{m+b+l-1}Z_m(x,\eta).$$ Differentiating $l$ times with respect to $r$ and putting $r=1$, we obtain
\begin{equation}\label{derPoi}
\frac{\partial^l}{\partial r^l}\left(r^{b+l-1}P(rx,\eta)\right)\big|_{r=1}
=(b)_l+W(x,\eta).
\end{equation}
Applying Leibniz and chain rules to the left and using Lemma \ref{LPoisson} we obtain the desired result since highest order of differentiation applied to $P$ is $l$. For a general $\lambda$, we apply $\partial^\lambda$ to \eqref{derPoi}, change the orders of the derivatives and apply Lemma \ref{LPoisson}. See the proof of Lemma 7.2 in \cite{GKU} for more details.
\end{proof}

\begin{lemma}\label{LEstab}
For $a,b\geq 0$, let $$W(x,\eta)=\sum_{m=1}^\infty\frac{\Gamma(m+a)}{\Gamma(m+b)}
\,Z_m(x,\eta)\qquad (x\in\mathbb B, \eta\in\mathbb S).$$
For a multi-index $\lambda$, set $c=n-1+a-b+|\lambda|$. There exists a constant $C>0$ such that for all $x\in\mathbb B$, $\zeta\in\mathbb S$,
\begin{equation}\label{EstW}
|(\partial^\lambda W)(x,\eta)|
\leq C\begin{cases}
\dfrac{1}{|x-\eta|^c},&\text{if $c>0$};\\
1+\log\dfrac{1}{|x-\eta|},&\text{if $c=0$};\\ 1,&\text{if $c<0$}.
\end{cases}
\end{equation}
\end{lemma}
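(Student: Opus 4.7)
The plan is to reduce the series defining $W$ to a derivative of the Euclidean Poisson kernel $P(y,\eta)=\sum_{m=0}^\infty Z_m(y,\eta)$ integrated against a Beta-type density, so that the Euclidean estimate in Lemma \ref{LPoisson} and the one-variable integral estimate in Lemma \ref{LInt01} finish the job. I choose an integer $l\geq 0$ with $b+l>a$ (take $l=0$ when $b>a$; otherwise $l=\lfloor a-b\rfloor+1$ works). Because $b+l-a>0$, the Beta-function identity
$$\frac{\Gamma(m+a)}{\Gamma(m+b+l)}=\frac{1}{\Gamma(b+l-a)}\int_0^1 t^{m+a-1}(1-t)^{b+l-a-1}\,dt$$
is available, while the remaining factor $\Gamma(m+b+l)/\Gamma(m+b)=(m+b)_l$ is a polynomial in $m$ of degree $l$.

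Substituting and interchanging sum and integral I obtain
$$W(x,\eta)=\frac{1}{\Gamma(b+l-a)}\int_0^1 t^{a-1}(1-t)^{b+l-a-1}\sum_{m=1}^\infty (m+b)_l\,t^m Z_m(x,\eta)\,dt.$$
For the inner sum I reuse the device from Lemma \ref{LEstb}: by homogeneity of $Z_m$,
$$\frac{d^l}{dt^l}\bigl(t^{b+l-1}P(tx,\eta)\bigr)=\sum_{m=0}^\infty (m+b)_l\,t^{m+b-1}Z_m(x,\eta),$$
so the inner sum equals $t^{1-b}\frac{d^l}{dt^l}\bigl(t^{b+l-1}P(tx,\eta)\bigr)-(b)_l$. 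I then apply $\partial^\lambda$ in $x$, pass it under the integral, and expand via Leibniz's rule; each resulting term is a power of $t$ times a spatial derivative $\partial^{\lambda'}P(tx,\eta)$ with $|\lambda'|\leq l+|\lambda|$, the extremal case having all $l$ $t$-derivatives and all $|\lambda|$ $x$-derivatives falling on $P$.

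Estimating each such derivative by Lemma \ref{LPoisson}, the dominant contribution becomes
$$|\partial^\lambda W(x,\eta)|\lesssim\int_0^1\frac{t^{a+l+|\lambda|-1}(1-t)^{b+l-a-1}}{|tx-\eta|^{\,n-1+l+|\lambda|}}\,dt\;+\;(\text{bounded lower-order terms}).$$
A direct parameter match with Lemma \ref{LInt01}---identifying $1+(b+l-a-1)+c=n-1+l+|\lambda|$---gives its $c$ equal to $n-1+a-b+|\lambda|$, exactly the quantity in the statement, and reading off the three cases yields the three bounds of \eqref{EstW}. Lower-order Leibniz terms correspond to integrals with strictly smaller singularity exponent and hence give at worst bounded or logarithmic contributions, which are absorbed into the stated estimate.

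The main obstacle is the bookkeeping. One must check that $b+l-a-1>-1$ (secured by the choice of $l$) so that Lemma \ref{LInt01} applies, justify the interchange of sum and integral (absolute convergence for $|x|<1$), and handle the delicate boundary cases---most notably $a=0$ together with $|\lambda|=0$, where naive splitting produces a divergent Beta integral and the $-(b)_l$ correction term must be kept together with the $P(tx,\eta)$ contribution so that the combination $P(tx,\eta)-1=O(t)$ cancels the $t^{-1}$ singularity at the origin. Once this organization is carried out, the argument reduces to routine application of Lemmas \ref{LPoisson} and \ref{LInt01}.
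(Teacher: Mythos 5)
Your proof is correct and follows essentially the same route as the paper: you reduce the ratio $\Gamma(m+a)/\Gamma(m+b)$ via a Beta-function integral (choosing $l$ so that $b+l-a>0$), handle the residual polynomial factor $(m+b)_l$ by taking $l$ $t$-derivatives of $t^{b+l-1}P(tx,\eta)$, and close with Lemmas~\ref{LPoisson} and~\ref{LInt01}, with a separate treatment of the $a=0$, $|\lambda|=0$ boundary case exploiting $P(tx,\eta)-1=O(t)$. The only presentational difference is that the paper factors the derivative-of-Poisson step out as the preceding Lemma~\ref{LEstb} and feeds its conclusion into the Beta integral, whereas you inline that lemma and carry out the Leibniz expansion explicitly; both arrive at the same parameter match $c=n-1+a-b+|\lambda|$ in Lemma~\ref{LInt01}.
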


\begin{proof}
We first consider the case $a>0$. Pick a non-negative integer $l$ such that $b-a+l>0$ and let
\begin{equation}\label{Defg}
g(x,\eta)=\sum_{m=1}^\infty\frac{\Gamma(m+b+l)}{\Gamma(m+b)}
\,Z_m(x,\eta)\qquad (x\in\mathbb B, \eta\in\mathbb S).
\end{equation}
We can write $W$ as an integral in the form
\begin{equation}\label{DefW}
W(x,\eta)=\frac{1}{\Gamma(b-a+l)}
\int_0^1 g(rx,\eta)r^{a-1}(1-r)^{b-a+l-1}\,dr.
\end{equation}
This can be verified by integrating the series expansion of $g(rx,\eta)$ which uniformly converges on $0\leq r\leq 1$ for fixed $x\in\mathbb B$. Estimating $g$ with Lemma \ref{LEstb} we obtain $$|W(x,\eta)|\lesssim
\int_0^1\frac{r^{a-1}(1-r)^{b-a+l-1}}{|rx-\eta|^{n-1+l}}\,dr.$$
When $\lambda=(0,\dots,0)$ the result follows from Lemma \ref{LInt01}. In the case of a general multi-index $\lambda$, apply $\partial^\lambda$ to \eqref{DefW}, push $\partial^\lambda$ into the integral and apply chain rule. The result follows from applying first Lemma \ref{LEstb} and then Lemma \ref{LInt01}.

We now consider the case $a=0$ which we need to deal with seperately because of the convergence problem in the integral in \eqref{DefW}. If $b=0$, then $W(x,\eta)=P(x,\eta)-1$ and the lemma follows from Lemma \ref{LPoisson}. If $b>0$, let (we take $l=0$ in \eqref{Defg}) $$g(x,\eta)=\sum_{m=1}^\infty Z_m(x,\eta)=P(x,\eta)-1.$$ Then $g(rx,\eta)=O(r)$ as $r\to 0^+$, since for $r\leq r_0<1$, by \eqref{EstZm} and $|x|<1$,
\begin{equation}\label{gisO}
|g(rx,\eta)=r\Big|\sum_{m=1}^\infty r^{m-1}Z_m(x,\eta)\Big|
\leq Cr\sum_{m=1}^\infty r_0^{m-1} m^{n-2}=Cr
\end{equation}
As above $$W(x,\eta)=\frac{1}{\Gamma(b)}\int_0^1 g(rx,\eta)r^{-1}(1-r)^{b-1}\,dr,$$ where the integral absolutely converges. Writing $\int_0^1=\int_0^{r_0}+\int_{r_0}^1$, the first integral is $O(1)$, and the second integral is estimated by Lemmas \ref{LPoisson} and \ref{LInt01} as above. If $|\lambda|>0$, by the chain rule, $\partial^\lambda\big(g(rx,\eta)\big)=O(r)$ as $r\to 0^+$ and we argue similarly.
\end{proof}

\begin{proof}[Proof of Proposition \ref{PEstW}]
We use induction on $k$. If $k=1$, the theorem follows from Lemma \ref{LEstab}. We deduce the case $k=2$, the general case is shown similarly. Let $$W(x,\eta)=\sum_{m=1}^\infty\frac{\Gamma(m+a_1)}{\Gamma(m+b_1)}
\frac{\Gamma(m+a_2)}{\Gamma(m+b_2)}\,Z_m(x,\eta),$$
and $$g(x,\eta)=\sum_{m=1}^\infty\frac{\Gamma(m+a_1)}{\Gamma(m+b_1)}
\,Z_m(x,\eta).$$
Note that starting with the estimate $\partial^\lambda P$, we estimated $\partial^\lambda g$ in two steps with Lemmas \ref{LEstb} and \ref{LEstab}. To estimate $\partial^\lambda W$ we repeat the same arguments, the only difference is we replace the role of $P$ by $g$. As the first step choose a nonnegative integer $l$ such that
\begin{equation}\label{Defd}
d:=n-1+(a_1-b_1)+l>0 \quad\text{and}\quad b_2-a_2+l>0,
\end{equation}
and let $$h(x,\eta)=\sum_{m=1}^\infty\frac{\Gamma(m+a_1)}{\Gamma(m+b_1)}
\frac{\Gamma(m+b_2+l)}{\Gamma(m+b_2)}\,Z_m(x,\eta).$$
We have (corresponding to \eqref{derPoi}) $$h(x,\eta)=\frac{\partial^l}{\partial r^l}\big(r^{b_2+l-1}g(rx,\eta)\big)\big|_{r=1}$$ and retracing the proof of Lemma \ref{LEstb} with replacing references to Lemma \ref{LPoisson} by Lemma \ref{LEstab} we see that
\begin{equation}\label{Estbetah}
|(\partial^\lambda h)(x,\eta)|\lesssim \frac{1}{|x-\eta|^{d+|\lambda|}}.
\end{equation}
Note that highest order of derivative applied to $g$ is $d+|\lambda|$ which is positive by the first assumption in \eqref{Defd}. For the second step observe that (corresponding to \eqref{DefW}) $$W(x,\eta)=\frac{1}{\Gamma(b_2-a_2+l)}
\int_0^1 h(rx,\eta)r^{a_2-1}(1-r)^{b_2-a_2+l-1}\,dr.$$
Retracing the proof of Lemma \ref{LEstab} with replacing references to Lemma \ref{LEstb} by the estimate \eqref{Estbetah} proves the case $k=2$. In case $a_2=0$ we also use the fact that $h(rx,\eta)=O(r)$ as $r\to 0^+$ which can be shown as in \eqref{gisO} with using also \eqref{Stirling}.
\end{proof}

In the next corollary we allow the second variable of $W$ to be
in $\overline{\mathbb B}$.

\begin{corollary}\label{CEst}
Let $a_i,b_i\geq 0\ (i=1,2,\dots,k)$ and for
$x\in\mathbb B$, $y\in\overline{\mathbb B}$, let
$$W(x,y)=\sum_{m=1}^\infty\frac{\Gamma(m+a_1)}{\Gamma(m+b_1)}
\cdots\frac{\Gamma(m+a_k)}{\Gamma(m+b_k)}\,Z_m(x,y).$$
For a multi-index $\lambda$, set
$c=n-1+(a_1-b_1)+\cdots+(a_k-b_k)+|\lambda|$.
Tthere exists a constant $C>0$
such that for all $x\in\mathbb B$, $y\in\overline{\mathbb B}$,
$$\bigl|\partial^\lambda W(x,y)\bigr|
\leq C\begin{cases}
\dfrac{1}{[x,y]^{c}},&\text{if $c>0$};\\
1+\log\dfrac{1}{[x,y]},&\text{if $c=0$};\\
1,&\text{if $c<0$},
\end{cases}$$
where differentiation is applied to the first variable.
\end{corollary}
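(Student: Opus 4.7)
The plan is to reduce the statement to Proposition \ref{PEstW} by exploiting the homogeneity of the zonal harmonics. For $y\in\overline{\mathbb B}$ with $y\neq 0$, write $\eta=y/|y|\in\mathbb S$. Since $Z_m(x,\cdot)$ is homogeneous of degree $m$, we have $Z_m(x,y)=|y|^m Z_m(x,\eta)$, and by homogeneity in the first variable this equals $Z_m(|y|x,\eta)$. Summing term-by-term (the geometric factor $|y|^m$ ensures absolute convergence when $|y|<1$, while the case $|y|=1$ already falls under Proposition \ref{PEstW}) yields the identity
$$W(x,y)=W(|y|x,\eta).$$

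Now $|y|x\in\mathbb B$ whenever $x\in\mathbb B$, so by the chain rule, for any multi-index $\lambda$,
$$\partial^\lambda_x W(x,y)=|y|^{|\lambda|}\,(\partial^\lambda W)(|y|x,\eta).$$
Applying Proposition \ref{PEstW} to the right-hand side controls it by the appropriate function of $\bigl||y|x-\eta\bigr|$. A direct expansion shows
$$\bigl||y|x-\eta\bigr|^2=|y|^2|x|^2-2\langle x,y\rangle+1=[x,y]^2,$$
which is exactly identity \eqref{bracketxy}. Combined with $|y|^{|\lambda|}\leq 1$, this yields the claimed estimate in all three regimes $c>0$, $c=0$, $c<0$.

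It remains to treat $y=0$. Since $Z_m$ is homogeneous of degree $m\geq 1$ in its second argument, $Z_m(x,0)=0$ for every $m\geq 1$, so $W(x,0)\equiv 0$ and all its $x$-derivatives vanish. On the other hand $[x,0]=1$ by definition, so the right-hand side of the claimed inequality is a positive constant, and the estimate holds trivially.

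I do not expect any real obstacle here: the proof is essentially a one-line substitution followed by a verification of the key identity $\bigl||y|x-\eta\bigr|=[x,y]$. The only point requiring care is keeping track of the factor $|y|^{|\lambda|}$ introduced by the chain rule and noting it is harmless, together with separating off the degenerate case $y=0$ where the homogeneity argument breaks down but the conclusion is trivial.
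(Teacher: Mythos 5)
Your proof is correct and follows essentially the same route as the paper's: reduce to Proposition \ref{PEstW} via the identity $W(x,y)=W(|y|x,\eta)$ (homogeneity of $Z_m$ in both variables), apply the chain rule and bound $|y|^{|\lambda|}\leq 1$, and invoke $\bigl||y|x-\eta\bigr|=[x,y]$ from \eqref{bracketxy}. Your explicit treatment of the degenerate case $y=0$ is a small addition the paper leaves implicit, but adds nothing substantive to the argument.
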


\begin{proof}
If $y=|y|\eta$ with $\eta\in\mathbb S$, then by homogeneity of $Z_m$ in
both variables, we have $Z_m(x,y)=Z_m(|y|x,\eta)$.
Therefore $W(x,y)=W(|y|x,\eta)$ and by the chain rule
$\partial^\lambda W(x,y)=|y|^\lambda(\partial^\lambda W)(|y|x,\eta)$.
Since $|y|\leq 1$, $\partial^\lambda W(x,y)$ is bounded from above by the
same terms given in Proposition \ref{PEstW} only with $|x-\eta|$ replaced
by $\big||y|x-\eta\big|$.
Because $\big||y|x-\eta\big|=[x,y]$ by \eqref{bracketxy}, the corollary follows.
\end{proof}

We now insert the factors $S_m(|x|)$ and $S_m(|y|)$
into the series.

\begin{theorem}\label{TEstSm}
Let $a_i,b_i\geq 0\ (i=1,2,\dots,k)$ and for
$x\in\mathbb B$, $y\in\overline{\mathbb B}$, let
$$h(x,y)=\sum_{m=1}^\infty\frac{\Gamma(m+a_1)}{\Gamma(m+b_1)}
\cdots\frac{\Gamma(m+a_k)}{\Gamma(m+b_k)}\,
S_m(|x|)S_m(|y|)\,Z_m(x,y).$$
Set $c=n-1+(a_1-b_1)+\cdots+(a_k-b_k)$.
There exists a constant $C>0$ depending only on $a_i,b_i$ and $n$
such that for all $x\in\mathbb B$, $y\in\overline{\mathbb B}$,
$$|h(x,y)|
\leq C \begin{cases}
\dfrac{1}{[x,y]^{c}},&\text{if $c>0$};\\
1+\log\dfrac{1}{[x,y]},&\text{if $c=0$};\\
1,&\text{if $c<0$}.
\end{cases}$$
\end{theorem}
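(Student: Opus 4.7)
The plan is to substitute the integral representation \eqref{IntSmr} of $S_m$ into both $S_m(|x|)$ and $S_m(|y|)$ in the defining series for $h(x,y)$, interchange summation and integration, and absorb the resulting $t^{m-1}u^{m-1}$ factors by the homogeneity identity $t^{m-1}u^{m-1}Z_m(x,y)=Z_m(tx,uy)/(tu)$. Since $1/B(m,n-1)=\Gamma(m+n-1)/[\Gamma(m)\Gamma(n-1)]$, this produces
\[
h(x,y)=\frac{1}{\Gamma(n-1)^2}\iint_{[0,1]^2}\frac{W(t,u;x,y)}{tu}\,G(tx,uy)\,dt\,du,
\]
where $W(t,u;x,y)=(1-t)^{n/2-1}(1-u)^{n/2-1}(1-|x|^2t)^{n/2-1}(1-|y|^2u)^{n/2-1}$ and
\[
G(X,Y)=\sum_{m=1}^{\infty}\prod_{i=1}^{k}\frac{\Gamma(m+a_i)}{\Gamma(m+b_i)}\cdot\frac{\Gamma(m+n-1)^2}{\Gamma(m)^2}\,Z_m(X,Y).
\]
The case $y=0$ is trivial, since $Z_m(x,0)=0$ for $m\ge 1$ forces $h(x,0)=0$.

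The series defining $G$ is precisely of the form treated by Corollary~\ref{CEst}, now with $k+2$ gamma ratios. Setting $\alpha=\sum_{i=1}^{k}(a_i-b_i)$, the coefficient grows like $m^{\alpha+2(n-1)}$, so
\[
|G(X,Y)|\lesssim[X,Y]^{-(c+2(n-1))}
\]
(with the corresponding logarithmic or bounded alternatives if $c+2(n-1)\le 0$).

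The apparent singularity $1/(tu)$ is not a genuine obstruction: because $Z_m(0,\cdot)=Z_m(\cdot,0)=0$ for every $m\ge 1$, the function $G(tx,uy)$ vanishes to first order in each of $tx$ and $uy$, so $G(tx,uy)/(tu)$ is uniformly bounded on $[0,1]^2$. I would split the $(t,u)$-integration into $\{tu\le\tfrac{1}{2}\}$, on which this uniform boundedness yields an $O(1)$ contribution that is already absorbed into any of the three target estimates, and its complement $\{tu\ge\tfrac{1}{2}\}$, on which $1/(tu)\lesssim 1$ and the sharp bracket estimate from Corollary~\ref{CEst} applies.

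The main obstacle is the remaining integral estimate
\[
\iint_{tu\ge 1/2}\frac{W(t,u;x,y)}{[tx,uy]^{c+2(n-1)}}\,dt\,du\lesssim
\begin{cases}[x,y]^{-c},&c>0,\\ 1+\log\bigl(1/[x,y]\bigr),&c=0,\\ 1,&c<0.\end{cases}
\]
Each of the two integrations must reduce the effective bracket exponent by $n-1$, for a total reduction of $2(n-1)$. The two weight factors $(1-t)^{n/2-1}$ and $(1-|x|^2t)^{n/2-1}$ must be used in tandem—either alone yields only a reduction of $n/2$ via a single application of Lemma~\ref{LInt01}. Recasting the $t$-integral through the identity $[tx,uy]=\Bigl||y|tx-\tfrac{y}{|y|}\Bigr|$ from \eqref{bracketxy} and exploiting the decomposition $(1-|x|^2t)=(1-|x|^2)+|x|^2(1-t)$ to combine the two weights near $t=1$ delivers the required reduction, with the three-case conclusion emerging from the trichotomy built into Lemma~\ref{LInt01}.
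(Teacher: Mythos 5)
Your plan reproduces the paper's skeleton faithfully up to the final step: integral representation \eqref{IntSmr} for both $S_m$'s, interchange of series and integral, absorbing $t^{m-1}u^{m-1}$ via homogeneity of $Z_m$, defining the auxiliary series $G$ (the paper's $W$), estimating it by Corollary~\ref{CEst} with two extra gamma ratios, and a near/far split in the $(t,u)$-plane to handle the $1/(tu)$ singularity. All of that matches the paper.

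The gap is in the crucial final estimate of
\begin{equation*}
\iint_{[0,1]^2}\frac{\bigl((1-t)(1-|x|^2t)(1-u)(1-|y|^2u)\bigr)^{n/2-1}}{[tx,uy]^{c+2(n-1)}}\,du\,dt,
\end{equation*}
which you leave at a sketch, and the mechanism you hint at is not the right one. The decomposition $(1-|x|^2t)=(1-|x|^2)+|x|^2(1-t)$ splits the integral into two pieces, one carrying the $t$-independent factor $(1-|x|^2)^{n/2-1}$, and neither by itself cleanly delivers the needed reduction; moreover, when $n=3$ the exponent $n/2-1$ is negative and the triangle-type inequality you would need runs the wrong way. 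What the paper actually does is simpler and more uniform: it uses the elementary pointwise bounds (its \eqref{Ineq1}--\eqref{Ineq2}) $1-t\le 2[tx,\tau y]$, $1-|x|^2t\le 2[tx,\tau y]$, and their analogues in $\tau$, to absorb all four weight factors into the bracket, dropping the exponent from $c+2(n-1)$ to $c+2$ pointwise, and then applies Lemma~\ref{LInt01} twice with $b=0$, once per variable. So the total $2(n-1)$ reduction is \emph{not} $n-1$ per integration as you state: $2n-4$ of it comes pointwise before any integration, and each Lemma~\ref{LInt01} application contributes only $1$. You would also need to treat separately the sub-cases $c=-1$, $-2(n-1)<c<-1$, $c+2(n-1)=0$, and $c+2(n-1)<0$ (where Corollary~\ref{CEst} gives a logarithm or a constant rather than a power), as in cases (ii) and (iii) of the paper's proof; your parenthetical acknowledgment of these is correct but needs to be carried through.
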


\begin{proof}
For shortness, write $d_m=\prod_{i=1}^k \Gamma(m+a_i)/\Gamma(m+b_i)$.
Inserting the integral formula \eqref{IntSmr} for both $S_m(|x|)$ and $S_m(|y|)$
and changing the orders of the integrals and the series we obtain
\begin{align*}
h(x,y)=\int_0^1\int_0^1&
(1-t)^{\tfrac{n}{2}-1}(1-|x|^2t)^{\tfrac{n}{2}-1}
(1-\tau)^{\tfrac{n}{2}-1}(1-|y|^2\tau)^{\tfrac{n}{2}-1}\\
&\times\sum_{m=1}^\infty \frac{d_m}{B^2(m,n-1)}\, t^{m-1}\tau^{m-1} Z_m(x,y)\,d\tau dt.
\end{align*}
The interchange of the orders is possible since for fixed $x$ and $y$
the series uniformly converges for $0\leq t,\tau\leq 1$ by \eqref{Stirling},
homogeneity of $Z_m$ and \eqref{EstZm}.
When $0\leq t\leq 1/2$ or $0\leq\tau\leq 1/2$ above integrand is uniformly bounded
for every $x,y$ by the same reasoning.
When $1/2\leq t,\tau\leq 1$, observe that
$t^{m-1}\tau^{m-1}Z_m(x,y)=Z_m(tx,\tau y)/t\tau$.
Therefore, if we let
$$W(x,y)=\sum_{m=1}^\infty d_m\,\frac{\Gamma^2(m+n-1)}{\Gamma^2(m)}\,Z_m(x,y),$$
it holds that
$$|h(x,y)|\leq C+C\int_{\frac{1}{2}}^1\int_{\frac{1}{2}}^1
\bigl((1-t)(1-|x|^2t)(1-\tau)(1-|y|^2\tau)\bigr)^{\frac{n}{2}-1}
\frac{1}{t\tau}|W(tx,\tau y)|\,d\tau dt.$$
We can get rid of the factor $1/(t\tau)$ and obtain
\begin{equation}\label{Wtxtauy}
|h(x,y)|\lesssim 1+\int_0^1\int_0^1
\bigl((1-t)(1-|x|^2t)(1-\tau)(1-|y|^2\tau)\bigr)^{\frac{n}{2}-1}
|W(tx,\tau y)|\,d\tau dt.
\end{equation}
We estimate $W(x,y)$ with Corollary \ref{CEst} in three cases depending
on the sign of $c+2(n-1)$, since the coefficient in the definition
of $W$ is $d_m\Gamma^2(m+n-1)/\Gamma^2(m)$.

Case (i). If $c+2(n-1)>0$, then by Corollary \ref{CEst},
$$|h(x,y)|\lesssim 1+\int_0^1\int_0^1
\frac{\bigl((1-t)(1-|x|^2t)(1-\tau)(1-|y|^2\tau)\bigr)^{\frac{n}{2}-1}}
{[tx,\tau y]^{c+2(n-1)}}\,d\tau dt.$$
Note that for $0\leq t\leq 1$,
\begin{equation}\label{Ineq1}
1-t\leq 1-|x|^2t\leq 1-|x|^2t^2\leq 2(1-|x|t)\leq 2(1-|x||y|t\tau)\leq 2[tx,\tau y],
\end{equation}
where the last inequality follows from \eqref{bracketEst}.
Similarly,
\begin{equation}\label{Ineq2}
1-\tau\leq 1-|y|^2\tau\leq 2[tx,\tau y].
\end{equation}
Hence
\begin{equation}\label{Esthxy}
|h(x,y)|\lesssim 1+\int_0^1\int_0^1
\frac{1}{[tx,\tau y]^{c+2}}\,d\tau dt
=1+\int_0^1\int_0^1
\frac{1}{|\tau t|y|x-\eta|^{c+2}}\,d\tau dt,
\end{equation}
where in the last equality we write $y=|y|\eta$, $\eta\in\mathbb S$
and use \eqref{bracketxy}.
We next estimate the inner integral with Lemma \ref{LInt01}.
This requires to consider three subcases.

If $c>-1$, applying Lemma \ref{LInt01} to the inner integral in \eqref{Esthxy} we obtain
$$|h(x,y)|\lesssim 1+\int_0^1
\frac{dt}{\bigl|t|y|x-\eta\bigr|^{c+1}}.$$
Applying Lemma \ref{LInt01} one more time in the three cases $c>0$, $c=0$ and $-1<c<0$
(and noting that $\bigl||y|x-\eta\bigr|=[x,y]$ by \eqref{bracketxy}) proves the theorem when $c>-1$.

If $c=-1$, applying Lemma \ref{LInt01} to the inner integral in \eqref{Esthxy} shows
$$|h(x,y)|\lesssim 1+\int_0^1 \Bigl(1+\log\frac{1}{\bigl|t|y|x-\eta\bigr|}\Bigr)\,dt
\leq 1+\int_0^1 \Bigl(1+\log\frac{1}{1-t}\Bigr)\,dt\lesssim 1,$$
since $\bigl|t|y|x-\eta\bigr|\geq 1-t|y||x|\geq 1-t$, and last integral is finite.

If $-2(n-1)<c<-1$, applying Lemma \ref{LInt01} to the inner integral in \eqref{Esthxy},
we obtain
$$|h(x,y)|\lesssim 1+\int_0^1\,dt\lesssim 1.$$
Thus the theorem holds when $c>-2(n-1)$.

Case (ii). If $c+2(n-1)=0$, estimating $W(tx,\tau y)$ in \eqref{Wtxtauy}
with Corollary \ref{CEst}, we obtain
$$|h(x,y)|\lesssim 1+\int_0^1\int_0^1
\bigl((1-t)(1-|x|^2t)(1-\tau)(1-|y|^2\tau)\bigr)^{\frac{n}{2}-1}
\Bigl(1+\log\frac{1}{[tx,\tau y]}\Bigr)\,d\tau dt.$$
Since $[tx,\tau y]\geq 1-t\tau|x||y|\geq 1-t$ by \eqref{bracketEst}, and
the other terms in the integrand are bounded from above by 1, we have
$$|h(x,y)|\lesssim 1+\int_0^1\log\frac{1}{1-t}\,dt\lesssim 1.$$

Case (iii). If $c+2(n-1)<0$, then by \eqref{Wtxtauy} and Corollary \ref{CEst},
\begin{equation*}
|h(x,y)|\lesssim  1+\int_0^1\int_0^1
\bigl((1-t)(1-|x|^2t)(1-\tau)(1-|y|^2\tau)\bigr)^{\frac{n}{2}-1}\,d\tau dt\lesssim 1,
\end{equation*}
since again the integrand is bounded by $1$.
This finishes the proof.
\end{proof}

Using Theorem \ref{TEstSm} we can find upper bounds for the reproducing kernels of
both Bergman and Hardy spaces of $\mathcal{H}$-harmonic functions on $\mathbb B$.
We begin with Hardy spaces.
The reproducing kernel $\mathcal{K}(x,y)$ of the Hardy space $\mathcal{H}^2$
has the series expansion \cite[Theorem 2.5]{St3}
$$\mathcal{K}(x,y)=\sum_{m=0}^\infty S_m(|x|)S_m(|y|)\,Z_m(x,y) \qquad (x,y\in\mathbb B).$$
The following corollary of Theorem \ref{TEstSm} improves the estimate
of $\mathcal{K}(x,y)$ given in \cite[Theorem 3.2]{St3}.
We note that $\mathcal{K}(x,y)\geq 0$ by \cite[Theorem 2.1]{St3}).
\begin{corollary}
There exists a constant $C>0$ depending only on $n$ such that
for all $x,y\in\mathbb B$,
$$\mathcal{K}(x,y)\leq\frac{C}{[x,y]^{n-1}}.$$
Moreover the exponent $(n-1)$ is non-improvable.
\end{corollary}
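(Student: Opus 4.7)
The plan is to obtain the upper bound as an immediate consequence of Theorem \ref{TEstSm}, and then to establish sharpness by a diagonal computation of $\mathcal K(x,x)$.

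For the upper bound, I would apply Theorem \ref{TEstSm} with $k=1$ and $a_1=b_1$ taken to be any common positive number, so that the Gamma quotient is identically $1$ and the critical exponent becomes $c=n-1$, which is positive since $n\geq 3$. This yields
$$\sum_{m=1}^\infty S_m(|x|)S_m(|y|)Z_m(x,y)\ \leq\ \frac{C}{[x,y]^{n-1}}.$$
The missing $m=0$ term in $\mathcal K$ equals $S_0(|x|)S_0(|y|)Z_0(x,y)=1$ and is absorbed, since $[x,y]\leq 1+|x||y|<2$ on $\mathbb B\times\mathbb B$ gives $1\lesssim 1/[x,y]^{n-1}$. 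Adding the two contributions proves the stated bound on $\mathcal K(x,y)$.

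For non-improvability, I would restrict to the diagonal $y=x$. Writing $\zeta=x/|x|$, homogeneity of $Z_m$ in each variable gives $Z_m(x,x)=|x|^{2m}Z_m(\zeta,\zeta)=|x|^{2m}\dim H_m$, while Lemma \ref{LSmrEst} furnishes $S_m(|x|)\geq 1$. Using the standard generating identity $\sum_{m=0}^\infty \dim H_m\cdot r^m=(1+r)/(1-r)^{n-1}$, this produces
$$\mathcal K(x,x)\ \geq\ \sum_{m=0}^\infty|x|^{2m}\dim H_m\ =\ \frac{1+|x|^2}{(1-|x|^2)^{n-1}}.$$
Since $[x,x]=1-|x|^2$, the right-hand side is $\gtrsim 1/[x,x]^{n-1}$, in particular blowing up as $|x|\to 1^-$. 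Therefore any global inequality $\mathcal K(x,y)\leq C/[x,y]^s$ throughout $\mathbb B\times\mathbb B$ forces $s\geq n-1$, establishing that the exponent $n-1$ cannot be lowered.

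There is essentially no obstacle here; the argument is bookkeeping once Theorem \ref{TEstSm} is available. The two points requiring attention are (i) that the theorem's sum starts at $m=1$, handled by peeling off the harmless $m=0$ term, and (ii) that the theorem's weights must have the form $\Gamma(m+a_i)/\Gamma(m+b_i)$, satisfied trivially here by choosing $a_1=b_1>0$.
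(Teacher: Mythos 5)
Your proposal is correct and follows essentially the same route as the paper: upper bound via Theorem \ref{TEstSm} with the $m=0$ term absorbed using $[x,y]<2$, and sharpness by restricting to the diagonal $y=x$ where $S_m\geq 1$ and $Z_m(x,x)=|x|^{2m}\dim H_m$. The only cosmetic difference is that you invoke the exact generating identity $\sum_m \dim H_m\,r^m=(1+r)/(1-r)^{n-1}$ to evaluate the diagonal sum in closed form, whereas the paper uses the asymptotic $\dim H_m\sim m^{n-2}$ and the standard estimate $\sum m^{n-2}r^m\sim(1-r)^{-(n-1)}$; both yield the same lower bound.
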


\begin{proof}
By Theorem \ref{TEstSm} and the fact that $[x,y]\leq 2$,
$$|\mathcal{K}(x,y)|=\Bigl|1+\sum_{m=1}^\infty S_m(|x|)S_m(|y|)\,Z_m(x,y)\Bigr|
\lesssim 1+\frac{1}{[x,y]^{n-1}}
\lesssim \frac{1}{[x,y]^{n-1}}.$$
On the other hand, when $y=x$,
$$\mathcal K(x,x)\gtrsim 1+\sum_{m=1}^\infty m^{n-2}|x|^{2m}
\sim \frac{1}{(1-|x|^2)^{n-1}}=\frac{1}{[x,x]^{n-1}},$$
since $S_m(r)\geq 1$ by Lemma \ref{LSmrEst} and
$Z_m(x,x)\sim |x|^{2m}m^{n-2}$ by \eqref{EstZm}.
\end{proof}

We next deal with Bergman spaces and prove Theorem \ref{TEstBergman}.

\begin{proof}[Proof of Theorem \ref{TEstBergman}]
Pick a positive integer $K$ such that $K>\alpha+2n-2$.
By \eqref{RepKer} and Corollary \ref{CAsymD},
$$\mathcal R_\alpha(x,y)
=1+\sum_{m=1}^\infty\frac{\Gamma(m+\alpha+n)}{\Gamma(m+n-1)}
\biggl(\sum_{k=0}^{K-1}\frac{D_k}{m^k}+\delta_m(K)\biggr)
S_m(|x|)S_m(|y|)\,Z_m(x,y),$$
where $\delta_m(K)=O(1/m^K)$, $m\to\infty$.

We first show that the last part with coefficient $\delta_m(K)$ is uniformly
bounded for all $x,y\in\mathbb B$.
Note that by Lemma \ref{LSmrEst},
$S_m(r)\lesssim m^{\tfrac{n}{2}-1}$, and
by homogeneity and \eqref{EstZm}, $|Z_m(x,y)|\lesssim|x|^m|y|^m m^{n-2}\lesssim m^{n-2}$.
Therefore using also \eqref{Stirling},
$$\biggl|\sum_{m=1}^\infty
\frac{\Gamma(m+\alpha+n)}{\Gamma(m+n-1)}
\frac{1}{m^K} S_m(|x|)S_m(|y|)\,Z_m(x,y)\biggr|
\lesssim \sum_{m=1}^\infty\frac{1}{m^{K-(\alpha+2n-3)}}
\lesssim 1,$$
by the choice of $K$.

Next, for $k=0,1,\dots,K-1$, writing $1/m^k=\Gamma^k(m)/\Gamma^k(m+1)$ and
applying Theorem \ref{TEstSm} with $c=n-1+(\alpha+1)-k=\alpha+n-k$, we see that
$$\biggl|\sum_{m=1}^\infty\frac{\Gamma(m+\alpha+n)}{\Gamma(m+n-1)}
\frac{1}{m^k}S_m(|x|)S_m(|y|)\,Z_m(x,y)\biggr|
\lesssim\begin{cases}
\dfrac{1}{[x,y]^{\alpha+n-k}},&\text{if $0\leq k<\alpha+n$};\\
1+\log\dfrac{1}{[x,y]},&\text{if $k=\alpha+n$};\\
1,&\text{if $k>\alpha+n$}.
\end{cases}$$
Since $0<[x,y]\leq 2$ the term with highest growth rate is
$1/[x,y]^{\alpha+n}$ occurring when $k=0$.
We conclude that
$|\mathcal R_\alpha(x,y)|\lesssim 1/[x,y]^{\alpha+n}$.
The exponent $(\alpha+n)$ is non-improvable by Lemma \ref{LRlower} below.
\end{proof}

\section{First-Order Partial Derivatives of Reproducing Kernels}\label{SPar}

In this section we estimate first-order partial derivatives of $\mathcal R_\alpha$.
We begin with the following lemma.

\begin{lemma}\label{LDerZm}
There exists a constant $C>0$ depending only on $n$ such that for all $x,y\in\mathbb B$
and $i=1,2,\dots,n$,
$$\Bigl|\frac{\partial}{\partial x_i}Z_m(x,y)\Bigr|\leq Cm^n|x|^{m-1}|y|^m.$$
\end{lemma}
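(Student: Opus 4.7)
The plan is to reduce, by homogeneity, to the case $\xi,\eta\in\mathbb S$, and then exploit the rotational invariance of $Z_m$ together with Markov's inequality for polynomials on $[-1,1]$.

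First, since $Z_m(\cdot,y)$ is a homogeneous polynomial of degree $m$ in $x$ and similarly in $y$, the partial derivative $\partial Z_m/\partial x_i$ is homogeneous of degree $m-1$ in $x$ and of degree $m$ in $y$. Hence, for $x,y\neq 0$, setting $\xi=x/|x|$ and $\eta=y/|y|$,
$$\frac{\partial}{\partial x_i}Z_m(x,y)=|x|^{m-1}|y|^m\,\Bigl(\frac{\partial}{\partial x_i}Z_m\Bigr)(\xi,\eta),$$
so it suffices to prove $\bigl|(\partial Z_m/\partial x_i)(\xi,\eta)\bigr|\leq Cm^n$ uniformly for $\xi,\eta\in\mathbb S$. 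The degenerate cases $x=0$ or $y=0$ are then automatic: both sides vanish when $m\geq 2$, and the $m=1$ case is immediate from the explicit formula for $Z_1$.

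Next, by rotational invariance of the zonal harmonic (equivalently, by the Gegenbauer-polynomial representation in \cite[Chapter 5]{ABR}), there is a one-variable polynomial $p_m$ of degree $m$ such that $Z_m(\xi,\eta)=p_m(\xi\cdot\eta)$ for $\xi,\eta\in\mathbb S$. Extending by homogeneity,
$$Z_m(x,y)=|x|^m|y|^m\,p_m\!\Bigl(\frac{\langle x,y\rangle}{|x|\,|y|}\Bigr)\qquad(x,y\neq 0),$$
and by \eqref{EstZm}, $\|p_m\|_{L^\infty([-1,1])}=p_m(1)=Z_m(\xi,\xi)=\dim H_m\lesssim m^{n-2}$. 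Applying the classical Markov inequality for polynomials on $[-1,1]$, namely $\|p'\|_\infty\leq m^2\|p\|_\infty$ whenever $\deg p\leq m$, then yields $\|p_m'\|_{L^\infty([-1,1])}\lesssim m^2\cdot m^{n-2}=m^n$.

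Finally, the chain rule with $t=\langle x,y\rangle/(|x||y|)$ gives
$$\frac{\partial}{\partial x_i}Z_m(x,y)=m\,x_i|x|^{m-2}|y|^m p_m(t)+|x|^m|y|^m p_m'(t)\,\frac{\partial t}{\partial x_i},$$
where $\partial t/\partial x_i=y_i/(|x||y|)-x_i\langle x,y\rangle/(|x|^3|y|)$, so $|\partial t/\partial x_i|\leq 2/|x|$ by $|\langle x,y\rangle|\leq|x||y|$. Combining with $|x_i|\leq|x|$ and the bounds above,
$$\Bigl|\frac{\partial}{\partial x_i}Z_m(x,y)\Bigr|\lesssim m\cdot m^{n-2}\,|x|^{m-1}|y|^m+m^n\,|x|^{m-1}|y|^m\lesssim m^n|x|^{m-1}|y|^m,$$
which is the desired estimate. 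The main obstacle is the representation $Z_m(\xi,\eta)=p_m(\xi\cdot\eta)$; once that is accepted, the exponent $m^n$ arises precisely as Markov's factor $m^2$ combined with the uniform bound $m^{n-2}$ on $p_m$.
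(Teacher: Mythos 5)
Your proof is correct, but it takes a genuinely different route from the paper's. Both arguments start from the same representation $Z_m(x,y)=|x|^m|y|^m p_m\bigl(\langle x,y\rangle/(|x||y|)\bigr)$ with $p_m$ a degree-$m$ polynomial (the paper writes $p_m$ explicitly as $\frac{n+2m-2}{n-2}C_m^{(n/2-1)}$), and both apply the chain rule. The difference is how the derivative $p_m'$ is controlled: the paper invokes the Gegenbauer derivative identity $\frac{d}{dt}C_m^{(n/2-1)}(t)=(n-2)C_{m-1}^{(n/2)}(t)$ and the uniform bound $|C_m^{(\lambda)}(t)|\leq(2\lambda)_m/m!$ on $[-1,1]$, while you bypass all Gegenbauer-specific information and simply combine the sup-norm bound $\|p_m\|_{\infty}=p_m(1)=\dim H_m\lesssim m^{n-2}$ (already recorded in \eqref{EstZm}) with the classical Markov inequality $\|p_m'\|_{\infty}\leq m^2\|p_m\|_{\infty}$. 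Your version is therefore more self-contained and elementary, needing nothing about ultraspherical polynomials beyond what the paper has already stated; the small price is that it is less amenable to refinement, since the paper's explicit formula exposes the factor $\eta_i-\zeta_i\langle\zeta,\eta\rangle$, which vanishes precisely at the endpoints $\langle\zeta,\eta\rangle=\pm1$ where Gegenbauer (or Markov/Chebyshev) extremality occurs, and this cancellation is what one would exploit to reach the conjectured sharper bound $m^{n-1}$ mentioned in the remark after the lemma. Two minor points: the opening paragraph on homogeneity reduction is harmless but redundant, since your chain-rule computation already treats arbitrary $x,y\neq 0$; and the treatment of the degenerate cases $x=0$ or $y=0$ is slightly terse (for $m=1$, $y=0$ the right-hand side is $0$, so one should note $\partial Z_1/\partial x_i=ny_i$ also vanishes there) but easily repaired.
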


It appears that this lemma can be improved by replacing $m^n$ with $m^{n-1}$,
but for our purposes the above estimate will be sufficient.

\begin{proof}
Write $x=|x|\zeta$, $y=|y|\eta$ with $\zeta,\eta\in\mathbb S$.
Comparing the expansions of $Z_m$ in \cite[Theorem 5.38]{ABR} and
the Gegenbauer (ultraspherical) polynomials in \cite[18.5.10]{OLBC} shows
that (when $n\geq 3$)
$$Z_m(x,y)=\frac{n+2m-2}{n-2}|x|^m|y|^m\,
C_m^{(n/2-1)}\Bigl(\bigl\langle\frac{x}{|x|},\frac{y}{|y|}\bigr\rangle\Bigr).$$
Differentiating and using
$\frac{d}{dt}C_m^{(n/2-1)}(t)=(n-2)C_{m-1}^{(n/2)}(t)$
(see \cite[18.9.19]{OLBC}) shows
\begin{align*}
\frac{\partial}{\partial x_i}Z_m(x,y)=\frac{n+2m-2}{n-2}|x|^{m-1}|y|^m
&\Bigl(m\zeta_i\, C_m^{(n/2-1)}(\langle\zeta,\eta\rangle)\\
&+(n-2)(\eta_i-\zeta_i\langle\zeta,\eta\rangle)\,
C_{m-1}^{(n/2)}(\langle\zeta,\eta\rangle)\Bigr).
\end{align*}
By \cite[18.14.4]{OLBC}, for $-1\leq t\leq 1$, $|C_m^{(\lambda)}(t)|\leq(2\lambda)_m/m!$.
Using this and \eqref{Stirling} gives the desired result.
\end{proof}

We state the next lemma as a preparation to the proof of
Theorem \ref{TPartial}.

\begin{lemma}\label{LPrep}
Let $\alpha>-1$ and $c_m(\alpha)$ be as in \eqref{cm}.
Let
$$W(x,y)=\sum_{m=1}^\infty c_m(\alpha)
\frac{\Gamma^2(m+n-1)}{\Gamma^2(m)}\,Z_m(x,y).$$
There exists a constant $C>0$ depending only $\alpha$ and $n$ such that
for all $x,y\in\mathbb B$,
\begin{enumerate}
\item[(i)] $|W(x,y)|\leq\dfrac{C}{[x,y]^{\alpha+n+2(n-1)}}$,
\item[(ii)]$\Big|\dfrac{\partial}{\partial x_i}W(x,y)\Big|
\leq\dfrac{C}{[x,y]^{\alpha+n+2(n-1)+1}} \qquad(i=1,2,\dots,n).$
\end{enumerate}
\end{lemma}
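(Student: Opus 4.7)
The plan is to substitute the asymptotic expansion of $c_m(\alpha)$ from Corollary \ref{CAsymD} into the series defining $W$ and then apply Corollary \ref{CEst} term by term, in the same spirit as the proof of Theorem \ref{TEstBergman}, except that now there are no factors $S_m(|x|)S_m(|y|)$, so Corollary \ref{CEst} is used directly rather than Theorem \ref{TEstSm}. Fix a positive integer $K$ with $K > \alpha + 3n$. By Corollary \ref{CAsymD},
$$c_m(\alpha)\frac{\Gamma^2(m+n-1)}{\Gamma^2(m)} = \frac{\Gamma(m+\alpha+n)\Gamma(m+n-1)}{\Gamma^2(m)}\Biggl(\sum_{k=0}^{K-1}\frac{D_k}{m^k} + \delta_m(K)\Biggr),$$
with $|\delta_m(K)| \lesssim 1/m^K$. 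Writing $1/m^k = \Gamma^k(m)/\Gamma^k(m+1)$ splits $W(x,y)$ into $K$ main series plus one error series.

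For part (i), I would apply Corollary \ref{CEst} (with $|\lambda|=0$) to the $k$-th main series, whose coefficient is a product of the Gamma-ratios $\Gamma(m+\alpha+n)/\Gamma(m)$, $\Gamma(m+n-1)/\Gamma(m)$, and $k$ copies of $\Gamma(m)/\Gamma(m+1)$. The relevant exponent becomes
$$c = (n-1) + (\alpha+n) + (n-1) - k = \alpha + n + 2(n-1) - k;$$
the dominant case $k=0$ yields $1/[x,y]^{\alpha+n+2(n-1)}$, and every smaller exponent (including a possible logarithm or constant when $c\leq 0$) is dominated by the same bound since $[x,y] < 2$. The error series has coefficient $\lesssim m^{\alpha + 2n - 1 - K}$ by Stirling's formula \eqref{Stirling}, and combined with $|Z_m(x,y)| \lesssim m^{n-2}$ from homogeneity and \eqref{EstZm}, the choice $K > \alpha + 3n$ makes it converge absolutely to a uniformly bounded function, which is again absorbed into the target bound.

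For part (ii), termwise differentiation in $x_i$ is justified because $c_m(\alpha) \sim m^{\alpha+1}$ and Lemma \ref{LDerZm} gives $|\partial Z_m/\partial x_i| \lesssim m^n|x|^{m-1}|y|^m$, so the differentiated series converges absolutely and uniformly on compact subsets of $\mathbb B \times \mathbb B$. Applying Corollary \ref{CEst} with $|\lambda|=1$ to the same main series now replaces $c$ by $c+1$, and in particular the $k=0$ contribution produces $1/[x,y]^{\alpha+n+2(n-1)+1}$. The error series is bounded analogously, using $|\partial Z_m/\partial x_i|\lesssim m^n$ together with $K > \alpha + 3n$.

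The only real obstacle is careful bookkeeping of exponents: one must track how the extra factor $\Gamma^2(m+n-1)/\Gamma^2(m)$ contributes precisely $2(n-1)$ to the effective power of $m$ (and hence to the exponent of $1/[x,y]$ after invoking Corollary \ref{CEst}), and how the $1/m^k$ terms subtract $k$ from it. Once the exponents are arranged correctly, both parts of the lemma drop out immediately from Corollary \ref{CEst}; no new analytic ideas beyond those already used in the proof of Theorem \ref{TEstBergman} are needed.
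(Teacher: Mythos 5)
Your proposal takes essentially the same route as the paper: invoke Corollary \ref{CAsymD} to expand $c_m(\alpha)$, write $1/m^k$ as $\Gamma^k(m)/\Gamma^k(m+1)$ so that each main term fits the hypotheses of Corollary \ref{CEst} (with $|\lambda|=0$ for part (i) and $|\lambda|=1$ for part (ii)), and bound the remainder series using Stirling, \eqref{EstZm}, and Lemma \ref{LDerZm}. The only cosmetic difference is that you fix a single $K>\alpha+3n$ for both parts, whereas the paper uses $K>\alpha+3n-2$ for part (i) and $K>\alpha+3n$ for part (ii); your choice is valid since any larger $K$ works.
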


\begin{proof}
Since by \eqref{gammamAsym}, the coefficient
$c_m(\alpha)\Gamma^2(m+n-1)/\Gamma^2(m)\sim m^{\alpha+1+2(n-1)}$ as $m\to\infty$,
the lemma follows from Corollary \ref{CEst}.
Nevertheless we provide the details.
Pick $K$ with $K>\alpha+3n-2$.
By Corollary \ref{CAsymD},
\begin{equation}\label{DefineW}
W(x,y)=\sum_{m=1}^\infty\frac{\Gamma(m+\alpha+n)}{\Gamma(m+n-1)}
\frac{\Gamma^2(m+n-1)}{\Gamma^2(m)}
\biggl(\sum_{k=0}^{K-1}\frac{D_k}{m^k}+\delta_m(K)\biggr)\,Z_m(x,y),
\end{equation}
where $\delta_m(K)=O(1/m^K)$.
Corollary \ref{CEst} implies that for the dominating term
corresponding to $k=0$,
\begin{equation}\label{sumless}
\biggl|\sum_{m=1}^\infty\frac{\Gamma(m+\alpha+n)}{\Gamma(m+n-1)}
\frac{\Gamma^2(m+n-1)}{\Gamma^2(m)}\frac{1}{m^k}\,Z_m(x,y)\biggr|
\lesssim \frac{1}{[x,y]^{\alpha+n+2(n-1)}}
\end{equation}
and for $k=1,2,\dots,K-1$, a better estimate holds but since $[x,y]\leq 2$,
\eqref{sumless} is true for these $k$'s also.
For the part involving coefficient $\delta_m(K)$, by \eqref{Stirling} and \eqref{EstZm},
$$\sum_{m=1}^\infty\biggl|\frac{\Gamma(m+\alpha+n)}{\Gamma(m+n-1)}
\frac{\Gamma^2(m+n-1)}{\Gamma^2(m)}\frac{1}{m^K}\,Z_m(x,y)\biggr|
\lesssim \sum_{m=1}^\infty \frac{1}{m^{K-(\alpha+3n-3)}}\lesssim 1.$$

To estimate the partial derivative, pick $K>\alpha+3n$ and differentiate \eqref{DefineW}.
By Corollary \ref{CEst}, for all $k=0,1,\dots,K-1$,
$$\biggl|\frac{\partial}{\partial x_i}\sum_{m=1}^\infty\frac{\Gamma(m+\alpha+n)}{\Gamma(m+n-1)}
\frac{\Gamma^2(m+n-1)}{\Gamma^2(m)}\frac{1}{m^k}\,Z_m(x,y)\biggr|
\lesssim \frac{1}{[x,y]^{\alpha+n+2(n-1)+1}}.$$
For the part involving coefficient $\delta_m(K)$, by Lemma \ref{LDerZm} and \eqref{Stirling},
\begin{equation*}
\sum_{m=1}^\infty\biggl|\frac{\Gamma(m+\alpha+n)}{\Gamma(m+n-1)}
\frac{\Gamma^2(m+n-1)}{\Gamma^2(m)}\frac{1}{m^K}\frac{\partial}{\partial x_i}Z_m(x,y)\biggr|
\lesssim \sum_{m=1}^\infty \frac{1}{m^{K-(\alpha+3n-1)}}\lesssim 1.\qedhere
\end{equation*}
\end{proof}

We are  now ready to prove Theorem \ref{TPartial}.

\begin{proof}[Proof of Theorem \ref{TPartial}]
Differentiating \eqref{RepKer}, we have
\begin{align*}
\frac{\partial}{\partial x_i}\mathcal R_\alpha(x,y)
=&\sum_{m=1}^\infty c_m(\alpha)\frac{\partial}{\partial x_i}S_m(|x|)
S_m(|y|)\,Z_m(x,y)\\
+&\sum_{m=1}^\infty c_m(\alpha)S_m(|x|)S_m(|y|)
\frac{\partial}{\partial x_i}Z_m(x,y)
=:T_1+T_2.
\end{align*}
The method we will use to estimate $T_1$ and $T_2$ is similar to the proof
of Theorem \ref{TEstSm} where every step is justified.
Here we will be brief and will not repeat the same arguments unless there is
an essential difference.

We first estimate $T_1$.
Using the integral formula \eqref{IntSmr} for both $S_m(|x|)$ and $S_m(|y|)$
and differentiating $S_m(|x|)$ under the integral sign shows
\begin{align*}
T_1=-(n-2)x_i\int_0^1\int_0^1&
(1-t)^{\tfrac{n}{2}-1}(1-|x|^2t)^{\tfrac{n}{2}-2}
(1-\tau)^{\tfrac{n}{2}-1}(1-|y|^2\tau)^{\tfrac{n}{2}-1}\\
&\times\sum_{m=1}^\infty \frac{c_m(\alpha)}{B^2(m,n-1)}\, t^m\tau^{m-1} Z_m(x,y)\,d\tau dt.
\end{align*}
Observe that the power of $(1-|x|^2t)$ is $n/2-2$ which is negative when $n=3$
and this case should be treated separately.
Define $W(x,y)$ as in Lemma \ref{LPrep}.
Using that $t^m\tau^{m-1}Z_m(x,y)=Z_m(tx,\tau y)/\tau$ we obtain
$$|T_1|\leq C+C\int_0^1\int_0^1
\bigl((1-t)(1-\tau)(1-|y|^2\tau)\bigr)^{\tfrac{n}{2}-1}
(1-|x|^2t)^{\tfrac{n}{2}-2}
|W(tx,\tau y)|\,d\tau dt,$$
where we can get rid of the factor $1/\tau$ in the same way as done in
the proof of Theorem \ref{TEstSm}.
Applying Lemma \ref{LPrep} (i) we deduce
$$|T_1|\lesssim 1+\int_0^1\int_0^1
\frac{\bigl((1-t)(1-\tau)(1-|y|^2\tau)\bigr)^{\tfrac{n}{2}-1}(1-|x|^2t)^{\tfrac{n}{2}-2}}
{[tx,\tau y]^{\alpha+n+2(n-1)}}\,d\tau dt.$$
When $n\geq 4$, using the inequalities in \eqref{Ineq1} and \eqref{Ineq2},
we conclude
\begin{equation}\label{T1}
|T_1|\lesssim 1+\int_0^1\int_0^1
\frac{1}{[tx,\tau y]^{\alpha+n+3}}\,d\tau dt
\lesssim \frac{1}{[x,y]^{\alpha+n+1}},
\end{equation}
where in the last inequality we use \eqref{bracketxy} and Lemma \ref{LInt01} twice.
When $n=3$, the power of $1-|x|^2t$ is negative and we repeat the same argument
with the only change that we use the inequality $1-|x|^2t\geq 1-t$.
We obtain \eqref{T1} again.

We now estimate $T_2$.
Applying the integral formula \eqref{IntSmr} twice shows
\begin{align*}
T_2=\int_0^1\int_0^1&
(1-t)^{\tfrac{n}{2}-1}(1-|x|^2t)^{\tfrac{n}{2}-1}
(1-\tau)^{\tfrac{n}{2}-1}(1-|y|^2\tau)^{\tfrac{n}{2}-1}\\
&\times\sum_{m=1}^\infty \frac{c_m(\alpha)}{B^2(m,n-1)}\, t^{m-1}\tau^{m-1}
\frac{\partial}{\partial x_i}Z_m(x,y)\,d\tau dt.
\end{align*}
We have
$$t^{m-1}\tau^{m-1}\,\frac{\partial}{\partial x_i} Z_m(x,y)
=\frac{1}{\tau}\Bigl(\frac{\partial}{\partial x_i}Z_m\Bigr)(tx,\tau y)$$
since $\tfrac{\partial }{\partial x_i}Z_m$ is homogeneous of degree
$m-1$ in the $x$-variable and degree $m$ in the $y$-variable.
Thus with $W$ as in Lemma \ref{LPrep},
$$|T_2|\leq C+C\int_0^1\int_0^1
\bigl((1-t)(1-|x|^2t)(1-\tau)(1-|y|^2\tau)\bigr)^{\tfrac{n}{2}-1}
\Bigl|\Bigl(\frac{\partial}{\partial x_i}W\Bigr)(tx,\tau y)\Bigr|\,d\tau dt,$$
where we can get rid of the factor $1/\tau$ as before.
Applying Lemma \ref{LPrep} (ii) shows
$$|T_2|\lesssim 1+\int_0^1\int_0^1
\frac{\bigl((1-t)(1-|x|^2t)(1-\tau)(1-|y|^2\tau)\bigr)^{\tfrac{n}{2}-1}}
{[tx,\tau y]^{\alpha+n+2(n-1)+1}}\,d\tau dt,$$
and first using \eqref{Ineq1} and \eqref{Ineq2} and then \eqref{bracketxy}
and Lemma \ref{LInt01} twice gives
\begin{equation*}
|T_2|\lesssim 1+\int_0^1\int_0^1
\frac{1}{[tx,\tau y]^{\alpha+n+3}}\,d\tau dt
\lesssim \frac{1}{[x,y]^{\alpha+n+1}}.
\end{equation*}
With the estimate of $T_1$ in \eqref{T1} and $T_2$ above, the proof is complete.
\end{proof}

\section{Weighted Integrals of Powers of Reproducing Kernels}\label{SInt}

In this section we prove the two-sided estimate given in Theorem \ref{TIntegral}.
The upper estimate part immediately follows from Theorem \ref{TEstBergman}
and the main problem is to obtain the lower estimate.
For this we first estimate $\mathcal R_\alpha(x,y)$ from below
when $y$ is a positive multiple of $x$.

\begin{lemma}\label{LRlower}
Let $\alpha>-1$.
There exists a constant $C>0$ depending only $\alpha$ and $n$ such that
for all $0\leq r,\rho<1$ and $\zeta\in\mathbb S$,
$$\mathcal R_\alpha(r\zeta,\rho\zeta)\geq \frac{C}{(1-r\rho)^{\alpha+n}}.$$
\end{lemma}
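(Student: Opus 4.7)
The plan is to exploit positivity of every term in the series \eqref{RepKer} when $x$ and $y$ are parallel, and then estimate the resulting sum from below using the asymptotics \eqref{gammamAsym} for $c_m(\alpha)$, the lower bound $S_m\geq 1$ from Lemma \ref{LSmrEst}, and the identity $Z_m(\zeta,\zeta)=\dim H_m\sim m^{n-2}$ from \eqref{EstZm}.

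First, I would note that with $x=r\zeta$ and $y=\rho\zeta$, the homogeneity of $Z_m$ in each variable gives $Z_m(r\zeta,\rho\zeta)=(r\rho)^m Z_m(\zeta,\zeta)\geq 0$ for every $m\geq 0$. Since $c_m(\alpha)=1/I_m>0$ (the integrand in \eqref{cm} is strictly positive) and $S_m(r),S_m(\rho)\geq 1$ by Lemma \ref{LSmrEst}, every summand in \eqref{RepKer} is non-negative. In particular, keeping only the $m=0$ term (where $c_0(\alpha)=S_0=Z_0=1$) shows
\begin{equation*}
\mathcal R_\alpha(r\zeta,\rho\zeta)\geq 1
\end{equation*}
for all $0\leq r,\rho<1$. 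This already settles the lemma on the region $r\rho\leq 1/2$, since there $(1-r\rho)^{\alpha+n}\geq 2^{-(\alpha+n)}$.

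For the remaining range $1/2\leq r\rho<1$, I would discard an initial block of terms and keep the tail. By \eqref{gammamAsym} there is an integer $M$ and a constant $c>0$, depending only on $\alpha$ and $n$, such that $c_m(\alpha)\geq c\,m^{\alpha+1}$ for all $m\geq M$; and by \eqref{EstZm} there is a constant $c'>0$ with $Z_m(\zeta,\zeta)\geq c'\,m^{n-2}$ for $m\geq 1$. Combined with $S_m(r)S_m(\rho)\geq 1$, positivity of the remaining terms yields
\begin{equation*}
\mathcal R_\alpha(r\zeta,\rho\zeta)\ \geq\ C\sum_{m=M}^\infty m^{\alpha+n-1}(r\rho)^m.
\end{equation*}

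The final step is the standard one-variable estimate $\sum_{m=M}^\infty m^{\alpha+n-1} t^m \gtrsim (1-t)^{-(\alpha+n)}$ for $1/2\leq t<1$, whose proof I would recall briefly: for $t$ close to $1$ one restricts the sum to the window $N\leq m\leq 2N$ with $N=\lfloor 1/(1-t)\rfloor$, on which $t^m\geq t^{2N}\geq e^{-2}$, and then uses that each of the $\sim N$ surviving summands is of size at least $N^{\alpha+n-1}$. Applying this with $t=r\rho$ gives the required bound on $1/2\leq r\rho<1$. The main (mild) obstacle is merely to patch together the two regimes $r\rho\leq 1/2$ and $r\rho\geq 1/2$ with a single constant; there is no real analytic difficulty here because the termwise non-negativity of the series removes any cancellation issue that normally complicates lower bounds for reproducing kernels.
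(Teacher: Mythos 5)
Your proposal is correct and follows essentially the same route as the paper: termwise non-negativity of the series \eqref{RepKer} when $x$ and $y$ are parallel, the lower bound $c_m(\alpha)\gtrsim m^{\alpha+1}$ from \eqref{gammamAsym}, $S_m\geq 1$ from Lemma \ref{LSmrEst}, and $Z_m(\zeta,\zeta)\sim m^{n-2}$ from \eqref{EstZm}, reducing to the standard estimate $\sum m^{\alpha+n-1}t^m\gtrsim(1-t)^{-(\alpha+n)}$. The only cosmetic difference is that the paper uses strict positivity of each $c_m(\alpha)$ to get $c_m(\alpha)\geq Cm^{\alpha+1}$ for \emph{all} $m\geq 1$ and then absorbs the constant term into a single inequality, whereas you split into the regimes $r\rho\leq 1/2$ and $r\rho\geq 1/2$ and discard the initial block of indices; both are fine.
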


\begin{proof}
Note that every term in the sum
$$\mathcal R_\alpha(r\zeta,\rho\zeta)=
1+\sum_{m=1}^\infty c_m(\alpha)S_m(r)S_m(\rho)\,Z_m(r\zeta,\rho\zeta)$$
is non-negative because $S_m(r)\geq 1$ by Lemma \ref{LSmrEst},
$c_m(\alpha)>0\,$ by \eqref{cm}, and
$Z_m(r\zeta,\rho\zeta)=(r\rho)^m\text{dim}\mathcal H_m\geq0$ by \eqref{EstZm}.
Since $c_m(\alpha)\sim m^{\alpha+1}$ as $m\to\infty$ by \eqref{gammamAsym}
and $c_m(\alpha)$ is strictly positive,
there exists a constant $C$ such that $c_m(\alpha)\geq Cm^{\alpha+1}$ for all $m\geq 1$.
Therefore using also \eqref{EstZm}, we have
\begin{equation*}
\mathcal R_\alpha(r\zeta,\rho\zeta)
\geq C\Bigl(1+\sum_{m=1}^\infty m^{\alpha+n-1}(r\rho)^m\Bigr)
\geq\frac{C}{(1-r\rho)^{\alpha+n}}.
\qedhere
\end{equation*}
\end{proof}

Lemma \ref{LRlower} is not sufficient to estimate the integral in
Theorem \ref{TIntegral} from below.
Our aim is to extend Lemma \ref{LRlower} to a larger domain of $y$.

For an orthogonal transformation $U:\mathbb R^n\to\mathbb R^n$,
by \cite[Proposition 5.27 (c)]{ABR}, $Z_m(Ux,Uy)=Z_m(x,y)$ and therefore
$\mathcal R_\alpha(Ux,Uy)=\mathcal R_\alpha(x,y)$.
Since there exists such a $U$ that $Ux=(|x|,0,\dots,0)$, for
notational simplicity, we consider only $x$ of the form $(r,0,\dots,0)$.
Let $e_1=(1,0,\dots,0)$.
For $y\in\mathbb R^n$, we write $y=(y_1,\hat y)$ with
$\hat y=(y_2,\dots,y_n)\in\mathbb R^{n-1}$.
For $s>0$, define the nontangential approach region
$$\Omega_s=\{\,y=(y_1,\hat y)\in\mathbb B\,:\,0<y_1<1,\ |\hat y|<s(1-y_1)\,\}.$$
The set $\Omega_s$ is intersection of a cone with vertex at $e_1$,
$\mathbb B$, and right half-space.

\begin{proposition}\label{Plower}
Let $\alpha>-1$.
There exists an $s<1/2$ and a constant $C>0$ depending only
on $\alpha,n,s$ such that for every $x=re_1$, $0\leq r<1$ and
$y\in\Omega_s$,
$$\mathcal R_\alpha(x,y)\geq\frac{C}{(1-ry_1)^{\alpha+n}}.$$
\end{proposition}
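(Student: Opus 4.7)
The plan is to extend Lemma \ref{LRlower} nontangentially by moving from the radial point $y^{(0)} := y_1 e_1$ to $y = (y_1, \hat{y}) \in \Omega_s$ along the segment $z(t) := (y_1, t\hat{y})$, $t \in [0,1]$, and controlling the change in $\mathcal R_\alpha$ through its partial derivatives in the $y$-variable, which are bounded by Theorem \ref{TPartial} and the symmetry $\mathcal R_\alpha(x,y) = \mathcal R_\alpha(y,x)$. Rotational invariance $\mathcal R_\alpha(Ux, Uy) = \mathcal R_\alpha(x, y)$, already noted before the proposition, reduces the general case to $x = re_1$.

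Applying Lemma \ref{LRlower} with $\zeta = e_1$ and $\rho = y_1$ gives
$$\mathcal R_\alpha(x, y^{(0)}) \geq \frac{C_0}{(1-ry_1)^{\alpha+n}}$$
for some $C_0 = C_0(\alpha,n) > 0$. The segment $z(t)$ lies in $\mathbb B$ since $|z(t)|^2 = y_1^2 + t^2|\hat y|^2 \leq |y|^2 < 1$. By the fundamental theorem of calculus and Theorem \ref{TPartial} (applied in the second variable via symmetry),
$$\bigl|\mathcal R_\alpha(x, y) - \mathcal R_\alpha(x, y^{(0)})\bigr| \leq C\, |\hat{y}| \int_0^1 \frac{dt}{[x, z(t)]^{\alpha+n+1}}.$$

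The key computation is the two-sided estimate $[x, z(t)] \sim 1 - ry_1$ uniformly in $t$. With $x = re_1$,
$$[x, z(t)]^2 = 1 - 2ry_1 + r^2 y_1^2 + r^2 t^2 |\hat{y}|^2 = (1 - ry_1)^2 + r^2 t^2 |\hat{y}|^2,$$
so $[x, z(t)] \geq 1 - ry_1$. For the upper bound, $1 - y_1 \leq 1 - ry_1$ and $|\hat{y}| < s(1-y_1)$ give $[x, z(t)]^2 \leq (1+s^2)(1-ry_1)^2$. Combined with $|\hat y| < s(1-y_1) \leq s(1-ry_1)$, this yields
$$\bigl|\mathcal R_\alpha(x, y) - \mathcal R_\alpha(x, y^{(0)})\bigr| \leq \frac{C_1 s}{(1-ry_1)^{\alpha+n}},$$
where $C_1 = C_1(\alpha, n)$ is independent of $s$.

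Finally, I would fix $s < 1/2$ small enough that $C_1 s \leq C_0/2$, giving
$$\mathcal R_\alpha(x, y) \geq \mathcal R_\alpha(x, y^{(0)}) - \frac{C_1 s}{(1-ry_1)^{\alpha+n}} \geq \frac{C_0/2}{(1-ry_1)^{\alpha+n}}.$$
The main obstacle is ensuring that the gradient-generated perturbation is genuinely of lower order than the radial main term; this is exactly what the nontangential hypothesis $|\hat y| < s(1-y_1)$ with $s$ small delivers, since it forces $|\hat y|/(1-ry_1) \leq s$ while simultaneously keeping $[x, z(t)]$ comparable to $1-ry_1$ along the whole segment.
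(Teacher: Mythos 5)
Your proof is correct and takes essentially the same approach as the paper: start from the radial point, invoke Lemma \ref{LRlower}, and control the transverse increment via Theorem \ref{TPartial} (using symmetry of $\mathcal R_\alpha$), then choose $s$ small. The one minor technical difference is how the bracket is bounded from below along the segment: you compute $[x,z(t)]^2=(1-ry_1)^2+r^2t^2|\hat y|^2\ge(1-ry_1)^2$ exactly for $x=re_1$, whereas the paper uses the generic inequality $[x,z]\ge 1-r|z|$ together with the cone condition to get $1-r|z|\ge(1-ry_1)/2$; your computation is slightly cleaner and gives the sharp constant $1$ instead of $1/2$, but the two arguments are interchangeable.
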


\begin{proof}
We follow \cite[Proposition 5]{Mia} and \cite[Theorem 7.2]{GKU}.
Given $y=(y_1,\hat y)\in\Omega_s$, with $s<1/2$,
let $y_p=(y_1,0)$ be the projection onto $y_1$-axis.
By Lemma \ref{LRlower}, there exists a constant $C_1>0$ such that
$$\mathcal R_\alpha(x,y_p)\geq\frac{C_1}{(1-ry_1)^{\alpha+n}}.$$
By the mean-value theorem of calculus
$$\mathcal R_\alpha(x,y)\geq\mathcal R_\alpha(x,y_p)-
\max_{z\in L(y_p,y)}\bigl|\nabla_z\mathcal R_\alpha(x,z)\bigr|\,|y-y_p|,$$
where $L(y_p,y)$ is the line segment joining $y_p$ and $y$.
By Theorem \ref{TPartial}, there exists a constant $C_2>0$ such that
(we also use the fact that $\mathcal R_\alpha(\cdot,\cdot)$ is symmetric with
respect to its two variables)
$$\bigl|\nabla_z\mathcal R_\alpha(x,z)\bigr|\leq\frac{C_2}{[x,z]^{\alpha+n+1}}
\leq\frac{C_2}{(1-r|z|)^{\alpha+n+1}},$$
where in the last inequality we use \eqref{bracketEst}.
Since $s<1/2$ and $L(y_p,y)\subset\Omega_s$, for $z\in L(y_p,y)$
we have $|z|\leq y_1+s(1-y_1)\leq (1+y_1)/2$. Therefore
$$1-r|z|\geq 1-\frac{r}{2}-\frac{ry_1}{2}\geq \frac{1-ry_1}{2}.$$
Combining above we deduce
$$\mathcal R_\alpha(x,y)\geq\frac{C_1}{(1-ry_1)^{\alpha+n}}
-\frac{2^{\alpha+n+1}C_2|y-y_p|}{(1-ry_1)^{\alpha+n+1}}.$$
Because $|y-y_p|=|\hat y|<s(1-y_1)<s(1-ry_1)$,
we can find a sufficiently small $s$ such that the lemma holds.
\end{proof}

We are now ready to prove Theorem \ref{TIntegral}.
\begin{proof}[Proof of Theorem \ref{TIntegral}]
By Theorem \ref{TEstBergman},
$$J(x)
:=\int_\mathbb B\bigl|\mathcal R_\alpha(x,y)\bigr|^{p}(1-|y|^2)^\beta\,d\nu(y)
\lesssim \int_\mathbb B\frac{(1-|y|^2)^\beta}{[x,y]^{p(\alpha+n)}}\,d\nu(y)$$
and the upper estimate follows from Lemma \ref{LIntBracket}.

For the lower estimate first note that since $\mathcal R_\alpha(\cdot,\cdot)$,
$1-|y|^2$ and $d\nu(y)$ are all invariant
under orthogonal transformations, without loss of generality, we can
assume $x=(r,0,\dots,0)$, $0\leq r<1$.
Let $s$ be as given in Proposition \ref{Plower}.
Then
$$J(x)
\geq\int_{\Omega_s}\bigl|\mathcal R_\alpha(x,y)\bigr|^{p}(1-|y|^2)^\beta\,d\nu(y)
\gtrsim\int_{\Omega_s}\frac{(1-|y|^2)^\beta}{(1-ry_1)^{p(\alpha+n)}}\,d\nu(y).$$
For $y=(y_1,\hat y)\in\Omega_s$, we have $(1-|y|^2)\sim(1-y_1)$.
This is true since first, $1-y_1\geq 1-|y|\geq (1-|y|^2)/2$.
Second, $|y|\leq y_1+|\hat y|\leq y_1+s(1-y_1)$ and so
$1-|y|^2\geq 1-|y|\geq (1-s)(1-y_1)$.
Thus iterating the integral over $\Omega_s$ we obtain
$$J(x)
\gtrsim\int_0^1\int_{s(1-y_1)\mathbb B_{n-1}}
\frac{(1-y_1)^\beta}{(1-ry_1))^{p(\alpha+n)}}\,d\nu_{n-1}(\hat y)\,dy_1
\sim\int_0^1\frac{(1-y_1)^{\beta+n-1}}{(1-ry_1)^{p(\alpha+n)}}\,dy_1,$$
where $\mathbb B_{n-1}$ is the $(n-1)$-dimensional unit ball,
$\nu_{n-1}$ is the $(n-1)$-dimensional volume measure and
$\nu_{n-1}\bigl(s(1-y_1)\mathbb B_{n-1}\bigr)\sim(1-y_1)^{n-1}$.
The lower estimate now follows from Lemma \ref{LInt01r}.
\end{proof}

\section{Bergman Projection}\label{SProj}

In this section we prove Theorems \ref{Tprojection} and \ref{TProjBloch}.
With the estimates obtained earlier this will be straightforward and similar
to the proof of \cite[Theorem 1.9]{HKZ}.
We provide the details for completeness.

We first recall that for every $f\in\mathcal H(\mathbb B)$, there exists a unique
sequence of homogeneous harmonic polynomials $p_m\in H_m(\mathbb R^n)$ such that
for all $x\in\mathbb B$,
\begin{equation}\label{seriesH}
f(x)=\sum_{m=0}^\infty S_m(|x|)\,p_m(x),
\end{equation}
where the above series converges absolutely and
uniformly on compact subsets of $\mathbb B$.
A proof of this can be found in \cite{Ja1}, \cite{JP2}, \cite{Min}, \cite[6.3.1]{St1}.
Using this we extend the reproducing property given in \eqref{Reproduce} to $\mathcal B^1_\alpha$.

\begin{lemma}\label{LRepB1}
Let $\alpha>-1$.
For all $f\in\mathcal B^1_\alpha$ and $x\in\mathbb B$,
$$f(x)=\int_\mathbb B\mathcal R_\alpha(x,y)f(y)\,d\nu_\alpha(y).$$
\end{lemma}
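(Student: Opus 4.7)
My plan is to apply Fubini in polar coordinates and then exploit the series expansions of $\mathcal{R}_\alpha$ and $f$ on each sphere via orthogonality of spherical harmonics, so that everything collapses through \eqref{cm} back to the series \eqref{seriesH} for $f$.

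First I would note that for fixed $x \in \mathbb{B}$ the kernel $\mathcal{R}_\alpha(x,\cdot)$ is uniformly bounded on $\mathbb{B}$: by Theorem \ref{TEstBergman} together with \eqref{bracketEst}, $|\mathcal{R}_\alpha(x,y)| \leq C[x,y]^{-(\alpha+n)} \leq C(1-|x|)^{-(\alpha+n)}$. Since $f \in L^1_\alpha$, the integral therefore converges absolutely and, by Fubini, equals
$$\frac{n}{V_\alpha} \int_0^1 r^{n-1}(1-r^2)^\alpha \int_\mathbb{S} \mathcal{R}_\alpha(x, r\zeta) f(r\zeta)\,d\sigma(\zeta)\,dr.$$

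For each $r \in (0,1)$, both series \eqref{seriesH} for $f$ and \eqref{RepKer} for $\mathcal{R}_\alpha(x,\cdot)$ converge uniformly on the compact set $r\mathbb{S}$. Writing $p_m(r\zeta) = r^m \tilde{p}_m(\zeta)$ with $\tilde{p}_m := p_m|_\mathbb{S}$, and $Z_k(x, r\zeta) = r^k Z_k(x, \zeta)$ by homogeneity, I can multiply the two uniformly convergent series and integrate term-by-term over $\mathbb{S}$. Orthogonality \eqref{OrthSpher} eliminates all off-diagonal products, while \eqref{ReproduceZm} extended by homogeneity gives $\int_\mathbb{S} Z_m(x,\zeta)\tilde{p}_m(\zeta)\,d\sigma = p_m(x)$, leaving
$$\int_\mathbb{S} \mathcal{R}_\alpha(x, r\zeta) f(r\zeta)\,d\sigma(\zeta) = \sum_m c_m(\alpha) S_m(|x|) p_m(x)\, S_m^2(r)\, r^{2m}.$$

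It then remains to interchange this sum with the outer $r$-integral; by Tonelli this is legitimate once
$$\sum_m c_m(\alpha)\,|S_m(|x|) p_m(x)| \int_0^1 r^{2m+n-1}(1-r^2)^\alpha S_m^2(r)\,dr = \tfrac{V_\alpha}{n} \sum_m |S_m(|x|) p_m(x)|$$
is shown finite (using \eqref{cm}), so verifying absolute convergence of $\sum_m |S_m(|x|) p_m(x)|$ at each $x$ is the main technical step. The hard part will be this last estimate, and my plan is as follows: fix $r_1 \in (|x|, 1)$; since $f$ is bounded on the compact sphere $r_1\mathbb{S}$, Parseval applied to $f(r_1\zeta) = \sum_m S_m(r_1) r_1^m \tilde{p}_m(\zeta)$ (which is orthogonal in $L^2(\mathbb{S})$ by \eqref{OrthSpher}) gives $\|\tilde{p}_m\|_{L^2(\mathbb{S})} \lesssim r_1^{-m}$, using $S_m(r_1) \geq 1$ from Lemma \ref{LSmrEst}. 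Combining with the pointwise bound $\|\tilde{p}_m\|_\infty \lesssim m^{(n-2)/2}\|\tilde{p}_m\|_{L^2(\mathbb{S})}$ coming from \eqref{EstZm} and Cauchy--Schwarz, plus Lemma \ref{LSmrEst} once more, yields $|S_m(|x|) p_m(x)| \lesssim (|x|/r_1)^m m^{n-2}$, which is geometrically summable. Once the interchange is justified, \eqref{cm} collapses each $r$-integral back to $1/c_m(\alpha)$, leaving $\sum_m S_m(|x|) p_m(x) = f(x)$ by \eqref{seriesH}.
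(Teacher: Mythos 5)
Your proof is correct and follows the same overall structure as the paper's: integrate in polar coordinates, multiply the two uniformly convergent series on $r\mathbb S$, integrate term by term using orthogonality \eqref{OrthSpher} and the reproducing property \eqref{ReproduceZm}, then interchange the resulting sum with the radial integral so that \eqref{cm} cancels $c_m(\alpha)$ exactly, recovering \eqref{seriesH}. The one place where you genuinely diverge from the paper is the key decay estimate that justifies the last interchange. The paper uses a much shorter argument here: pick $\rho>1$ with $\rho x\in\mathbb B$; the stated \emph{absolute} convergence of \eqref{seriesH} at $\rho x$ together with $S_m\geq 1$ gives $|p_m(\rho x)|\leq C$, and homogeneity then yields $|p_m(x)|=|p_m(\rho x)|/\rho^m\leq C/\rho^m$, which combined with $c_m(\alpha)\sim m^{\alpha+1}$ and $S_m\lesssim m^{n/2-1}$ gives the required uniform convergence in one line. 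You instead derive the geometric decay from scratch: pick $r_1\in(|x|,1)$, apply Parseval on $r_1\mathbb S$ (using continuity of the $\mathcal H$-harmonic $f$ plus $S_m(r_1)\geq 1$) to get $\|\tilde p_m\|_{L^2(\mathbb S)}\lesssim r_1^{-m}$, and then the $L^2$-to-$L^\infty$ bound $\|\tilde p_m\|_\infty\lesssim(\dim H_m)^{1/2}\|\tilde p_m\|_{L^2}$ from Cauchy--Schwarz with the zonal kernel. This is more work, but it only uses the convergence (not the absolute convergence) of \eqref{seriesH}, so it is in that sense slightly more self-contained; the paper's version is the economical choice given that absolute convergence is already quoted as part of the expansion theorem. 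Both routes yield $|S_m(|x|)p_m(x)|\lesssim m^{n-2}\theta^m$ for some $\theta<1$ and close the argument identically.
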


\begin{proof}
Fix $x\in\mathbb B$ and let $f$ has the expansion \eqref{seriesH}.
We integrate in polar coordinates as in \eqref{Polar}.
The integrability condition is satisfied since for fixed $x\in\mathbb B$,
$\mathcal R_\alpha(x,y)$ is bounded by Theorem \ref{TEstBergman} and \eqref{bracketEst}.
Thus
$$\int_\mathbb B\mathcal R_\alpha(x,y)f(y)\,d\nu_\alpha(y)
=\frac{1}{V_\alpha}\,n\int_0^1r^{n-1}(1-r^2)^\alpha
\int_\mathbb S\mathcal R_\alpha(x,r\zeta)f(r\zeta)\,d\sigma(\zeta)\,dr.$$
The series expansions of $\mathcal R_\alpha(x,\cdot)$ in \eqref{RepKer}
and of $f$ in \eqref{seriesH} uniformly converge on $r\mathbb S$.
Writing these, changing the orders of the integral and the series, and then
using the orthogonality in \eqref{OrthSpher} and reproducing property
of $Z_m$ in \eqref{ReproduceZm} shows that
$$\int_\mathbb B\mathcal R_\alpha(x,y)f(y) d\nu_\alpha(y)
=\frac{n}{V_\alpha}\int_0^1r^{n-1}(1-r^2)^\alpha
\sum_{m=0}^\infty c_m(\alpha)r^{2m} S_m(|x|)S_m^2(r)p_m(x)\, dr.$$
We claim that the above series uniformly converges on $0\leq r\leq 1$.
To see this pick $\rho>1$ such that $\rho x\in\mathbb B$.
Then by the absolute convergence of the series in \eqref{seriesH} and the fact that
$S_m\geq 1$ by Lemma \ref{LSmrEst},
$$\sum_{m=0}^\infty|p_m(\rho x)|
\leq\sum_{m=0}^\infty S_m(\rho|x|)\,|p_m(\rho x)|<\infty,$$
and so there exists $C>0$ such that $|p_m(\rho x)|\leq C$ for all $m$.
Thus by homogeneity
$$|p_m(x)|=\frac{|p_m(\rho x)|}{\rho^m}\leq\frac{C}{\rho^m},$$
which with \eqref{gammamAsym} and Lemma \ref{LSmrEst} implies the uniform convergence.
Changing the orders of the integral and the series gives the desired result.
\end{proof}

\begin{lemma}\label{TestInt}
Let $\alpha>-1$ and $\delta>-1$.
The integral
$$\int_\mathbb B\mathcal R_\alpha(x,y)(1-|y|^2)^\delta\,d\nu(y)$$
is constant for all $x\in\mathbb B$.
\end{lemma}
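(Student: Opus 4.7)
The plan is to pass to polar coordinates, expand $\mathcal R_\alpha$ in its series \eqref{RepKer}, and observe that the spherical integral collapses onto the $m=0$ term.

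Concretely, by \eqref{Polar},
\begin{equation*}
\int_\mathbb B \mathcal R_\alpha(x,y)(1-|y|^2)^\delta\,d\nu(y)
= n\int_0^1 r^{n-1}(1-r^2)^\delta
\Bigl(\int_\mathbb S \mathcal R_\alpha(x,r\zeta)\,d\sigma(\zeta)\Bigr)dr,
\end{equation*}
where the outer integrability is not an issue because Theorem \ref{TEstBergman} and \eqref{bracketEst} give $|\mathcal R_\alpha(x,y)|\lesssim (1-|x|)^{-(\alpha+n)}$ for fixed $x$. For the inner integral, I would plug in the series \eqref{RepKer} and use homogeneity, $Z_m(x,r\zeta)=r^m Z_m(x,\zeta)$, so
\begin{equation*}
\int_\mathbb S \mathcal R_\alpha(x,r\zeta)\,d\sigma(\zeta)
=\sum_{m=0}^\infty c_m(\alpha)S_m(|x|)S_m(r)\,r^m\int_\mathbb S Z_m(x,\zeta)\,d\sigma(\zeta).
\end{equation*}
For each fixed $x\in\mathbb B$, the spherical harmonic $\zeta\mapsto Z_m(x,\zeta)$ has degree $m$, so by \eqref{OrthSpher} it is orthogonal to the constant $Z_0=1$, and the integral vanishes for every $m\geq 1$. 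Only the term $m=0$ survives, where $c_0(\alpha)=S_0=Z_0=1$, giving $\int_\mathbb S \mathcal R_\alpha(x,r\zeta)\,d\sigma(\zeta)=1$ for all $r\in[0,1)$ and all $x\in\mathbb B$. Hence the original integral equals $n\int_0^1 r^{n-1}(1-r^2)^\delta dr$, a constant depending only on $n,\delta$ (and finite since $\delta>-1$).

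The one step that needs care is justifying the termwise integration over $\mathbb S$. For fixed $x\in\mathbb B$, combining $c_m(\alpha)\sim m^{\alpha+1}$ from \eqref{gammamAsym}, Lemma \ref{LSmrEst} ($S_m\lesssim m^{n/2-1}$), and the standard bound $|Z_m(x,r\zeta)|\lesssim m^{n-2}(|x|r)^m$ coming from \eqref{EstZm} together with homogeneity, the $m$-th term of the series is dominated by $C\,m^{\alpha+2n-2}|x|^m$ uniformly in $r\in[0,1]$ and $\zeta\in\mathbb S$. Since $|x|<1$, this majorant is summable, so the series converges uniformly on $[0,1]\times\mathbb S$ and the interchange with $\int_\mathbb S$ is legitimate. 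After that the conclusion is immediate, and no further estimate is needed for the outer $r$-integral since it is a finite Beta-type integral.
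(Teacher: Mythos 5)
Your proof is correct, and its overall structure — polar coordinates, evaluate the spherical integral, reduce to a Beta-type radial integral — is the same as the paper's. The one genuine difference is in how you show $\int_\mathbb S\mathcal R_\alpha(x,r\zeta)\,d\sigma(\zeta)=1$: you expand $\mathcal R_\alpha$ in its zonal-harmonic series and invoke orthogonality \eqref{OrthSpher} to kill every term with $m\geq 1$, with a careful uniform majorant to justify swapping $\sum$ and $\int_\mathbb S$. The paper reaches the same conclusion in one line by appealing to the mean-value property of $\mathcal H$-harmonic functions: since $\mathcal R_\alpha(x,\cdot)$ is $\mathcal H$-harmonic, the spherical average equals $\mathcal R_\alpha(x,0)=1$. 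Your argument is more self-contained (it essentially re-derives that instance of the mean-value property from the series representation), at the cost of the extra interchange-of-limits justification; the paper's version is shorter but relies on citing the mean-value property as a known fact. A small cosmetic point: the exponent in your majorant should be $m^{\alpha+2n-3}$ rather than $m^{\alpha+2n-2}$ (the contributions are $m^{\alpha+1}$ from $c_m$, $m^{n/2-1}$ from each $S_m$, and $m^{n-2}$ from $Z_m$), but since the geometric factor $|x|^m$ dominates anyway, this does not affect the conclusion.
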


\begin{proof}
For fixed $x\in\mathbb B$, the integrability condition is satisfied again
and we integrate in polar coordinates to obtain
$$\int_\mathbb B\mathcal R_\alpha(x,y)(1-|y|^2)^\delta\,d\nu(y)
=n\int_0^1r^{n-1}(1-r^2)^\delta
\int_\mathbb S\mathcal R_\alpha(x,r\zeta)\,d\sigma(\zeta)\,dr.$$
By the mean-value property of $\mathcal H$-harmonic functions,
the inner integral over $\mathbb S$ equals $\mathcal R_\alpha(x,0)=1$.
So the integral is $n\int_0^1r^{n-1}(1-r^2)^\delta\,dr=C$
for every $x\in\mathbb B$.
\end{proof}

\begin{proof}[Proof of Theorem \ref{Tprojection}]
Let $q$ be the conjugate exponent of $p$.
Before starting the proof we mention that the reason we used
the normalizing constant $V_\alpha$ is to normalize also the reproducing
kernel $\mathcal R_\alpha$, so that the series in \eqref{RepKer} starts
with $1$ and we have $\mathcal R_\alpha(x,0)=\mathcal R_\alpha(0,y)=1$.
This constant will show up in the formulas below, however it has no effect on
the boundedness of $P_\beta$ and can be ignored.

We first show the sufficiency of the condition \eqref{Projcon}.
We first note that if \eqref{Projcon} holds and $f\in L^p_\alpha$, then
$f\in L^1_\beta$.
This is clear when $p=1$, since in this case \eqref{Projcon} implies
$\beta>\alpha$.
When $1<p<\infty$, by H\"older's inequality
$$\int_\mathbb B|f(y)|(1-|y|^2)^\beta d\nu(y)
\leq \Bigl(\int_\mathbb B|f(y)|^p(1-|y|^2)^\alpha d\nu(y)\Bigr)^{\tfrac{1}{p}}
\Bigl(\int_\mathbb B (1-|y|^2)^{q(\beta-\tfrac{\alpha}{p})}d\nu(y)\Bigr)^{\tfrac{1}{q}}$$
and the last integral is finite since $q(\beta-\tfrac{\alpha}{p})>-1$ by \eqref{Projcon}.
So, $P_\beta f$ is a well-defined function on $\mathbb B$.
Also, since $\mathcal R_\alpha(\cdot,y)$ is $\mathcal H$-harmonic,
$P_\beta f$ is $\mathcal H$-harmonic.
Therefore, by Theorem \ref{TEstBergman}, it suffices to show that the operator $Q_\beta$
defined by
$$Q_\beta f(x):=\int_\mathbb B \frac{1}{[x,y]^{\beta+n}}\,f(y)(1-|y|^2)^\beta\,d\nu(y)$$
is bounded from $L^p_\alpha$ to $L^p_\alpha$.

If $p=1$, then by Fubini's theorem and Lemma \ref{LIntBracket} (with symmetry of $[x,y]$),
\begin{align*}
\|Q_\beta f\|_{1,\alpha}
&=\int_\mathbb B|Q_\beta f(x)|d\nu_\alpha(x)
=\frac{1}{V_\alpha}\int_\mathbb B|f(y)|(1-|y|^2)^\beta
\int_\mathbb B\frac{(1-|x|^2)^\alpha}{[x,y]^{\beta+n}}\,d\nu(x)\,d\nu(y)\\
&\lesssim\int_\mathbb B|f(y)|(1-|y|^2)^\beta\frac{1}{(1-|y|^2)^{\beta-\alpha}}d\nu(y)
=\|f\|_{1,\alpha},
\end{align*}
since $\beta-\alpha>0$ by \eqref{Projcon}.

If $1<p<\infty$, we apply Schur's test (see \cite[Theorem 1.8]{HKZ}).
According to this test, since
$$Q_\beta f(x)
=V_\alpha \int_\mathbb B \frac{1-|y|^2)^{\beta-\alpha}}{[x,y]^{\beta+n}}\,
f(y)\,d\nu_\alpha(y),$$
if we can find a positive function $h$ on $\mathbb B$ such that
\begin{equation}\label{h1}
\int_\mathbb B\frac{(1-|y|^2)^\beta}{[x,y]^{\beta+n}}h^q(y)\,d\nu(y)\lesssim h^q(x)
\qquad (x\in\mathbb B)
\end{equation}
and
\begin{equation}\label{h2}
(1-|y|^2)^{\beta-\alpha}\int_\mathbb B\frac{(1-|x|^2)^\alpha}{[x,y]^{\beta+n}}h^p(x)\,d\nu(x)
\lesssim h^p(y) \qquad (y\in\mathbb B),
\end{equation}
then $Q_\beta:L^p_\alpha\to L^p_\alpha$ is bounded.
We take $h(x)=(1-|x|^2)^{-\tfrac{\alpha+1}{pq}}$.
Then \eqref{h1} holds because by Lemma \ref{LIntBracket},
$$\int_\mathbb B\frac{(1-|y|^2)^{\beta-\tfrac{\alpha+1}{p}}}{[x,y]^{\beta+n}}\,d\nu(y)
\lesssim\frac{1}{(1-|x|^2)^{\tfrac{\alpha+1}{p}}}=h^q(x),$$
since $\beta-\tfrac{\alpha+1}{p}>-1$ by \eqref{Projcon}, and $\tfrac{\alpha+1}{p}>0$.
To see \eqref{h2} we again apply Lemma \ref{LIntBracket} to obtain
$$(1-|y|^2)^{\beta-\alpha}\int_\mathbb B
\frac{(1-|x|^2)^{\alpha-\tfrac{\alpha+1}{q}}}{[x,y]^{\beta+n}}\,d\nu(x)
\lesssim \frac{(1-|y|^2)^{\beta-\alpha}}{(1-|y|^2)^{\beta-\alpha+\tfrac{\alpha+1}{q}}}
=h^p(y).$$
since $\alpha-\tfrac{\alpha+1}{q}>-1$, and $\beta-\alpha+\tfrac{\alpha+1}{q}>0$ by \eqref{Projcon}.
This proves the sufficiency of \eqref{Projcon}.

To show the necessity of \eqref{Projcon}, suppose $P_\beta:L^p_\alpha\to L^p_\alpha$
is bounded.
Then the adjoint operator $P_\beta^*:L^q_\alpha\to L^q_\alpha$ is bounded (as usual we identify
the dual of $L^p_\alpha$ with $L^q_\alpha$), where $P_\beta^*$ is given by
$$P_\beta^*f(x)
=\frac{V_\beta}{V_\alpha}(1-|x|^2)^{\beta-\alpha}
\int_\mathbb B\mathcal R_\beta(x,y)f(y)\,d\nu_\alpha(y).$$
If $1<p<\infty$, then take $f=1$.
Since $f\in L^q_\alpha$, we must have $P_\beta^*f\in L^q_\alpha$.
But by Lemma \ref{TestInt}, we have $P_\beta^* f(x)=C(1-|x|^2)^{\beta-\alpha}$ and this belongs
to $L^q_\alpha$ only if $q(\beta-\alpha)+\alpha>-1$ which is same as \eqref{Projcon}.

If $p=1$, then $P_\beta^*$ is bounded on $L^\infty$ and we need to show that $\beta>\alpha$.
Taking $f=1$ again, we have $P_\beta^* f(x)=C(1-|x|^2)^{\beta-\alpha}$ which belongs to
$L^\infty$ only if $\beta\geq\alpha$.
What remains is to show that $\beta=\alpha$ can not be true.
Assume now that $\beta=\alpha$ and for $x_0\in\mathbb B$, define $f_{x_0}$ on $\mathbb B$ by
$$f_{x_0}(y)=
\begin{cases}
\dfrac{|\mathcal R_\beta(x_0,y)|}{\mathcal R_\beta(x_0,y)},&\text{if $\mathcal R_\beta(x_0,y)\neq 0$};\\
1,&\text{if $\mathcal R_\beta(x_0,y)=0$}.
\end{cases}$$
Clearly $\|f_{x_0}\|_\infty=1$, and by Theorem \ref{TIntegral},
$$P_\beta^*f_{x_0}(x_0)= \int_\mathbb B|\mathcal R_\beta(x_0,y)|\,d\nu_\beta(y)
\sim 1+\log\frac{1}{1-|x_0|^2}.$$
Because $P_\beta^*f_{x_0}$ is continuous, this implies
$\|P_\beta^*f_{x_0}\|_\infty\gtrsim 1+\log\dfrac{1}{1-|x_0|^2}\to \infty$ as
$|x_0|\to 1^-$.
This contradicts to the boundedness of $P_\beta^*$ on $L^\infty$.
Thus $\beta=\alpha$ can not be true and the necessity of \eqref{Projcon} is proved.

Finally, suppose \eqref{Projcon} holds and $f\in\mathcal B^p_\alpha$.
Then $f\in\mathcal B^1_\beta$ as shown in the beginning of the proof,
and it follows from Lemma \ref{LRepB1} that $P_\beta f=f$.
\end{proof}

We finish by proving Theorem \ref{TProjBloch}.

\begin{proof}[Proof of Theorem \ref{TProjBloch}]
Let $f\in L^\infty$.
First, since $\mathcal R_\alpha(0,y)=1$,
$$\bigl|P_\alpha f(0)\bigr|
=\Bigl|\int_\mathbb B\mathcal R_\alpha(0,y)f(y)\,d\nu_\alpha(y)\Bigr|
\leq\|f\|_\infty.$$
Next, by Theorem \ref{TPartial} and Lemma \ref{LIntBracket}, for $i=1,\dots,n$,
\begin{align*}
\biggl|\frac{\partial}{\partial x_i}\,P_\alpha f(x)\biggr|
&=\biggl|\int_\mathbb B\frac{\partial}{\partial x_i}\mathcal R_\alpha(x,y)f(y)\,d\nu_\alpha(y)\biggr|
\lesssim \|f\|_\infty\int_\mathbb B\frac{(1-|y|^2)^\alpha}{[x,y]^{\alpha+n+1}}\,d\nu(y)\\
&\lesssim \frac{\|f\|_\infty}{1-|x|^2}.
\end{align*}
Hence $(1-|x|^2)\bigl|\nabla P_\alpha f(x)\bigr|\lesssim\|f\|_\infty$ and
combining above we conclude that $P_\alpha f\in\mathcal B$ with
$\|P_\alpha f\|_\mathcal B\lesssim \|f\|_\infty$.
\end{proof}

\section*{Acknowledgment}
The author thanks H. Turgay Kaptano\u{g}lu of Bilkent University for helpful discussions
and bringing \cite{Pen} to his attention.
He also thanks Salvador P\'{e}rez-Esteva of Universidad Nacional Aut\'{o}noma de M\'{e}xico for providing
information about \cite{Pen}.

\bibliographystyle{amsalpha}

\begin{thebibliography}{10}
%
\bibitem{AAR}
G. E. Andrews, R. Askey, R. Roy,
Speciel Functions,
Cambridge University Press, Cambridge, 1999.
%
\bibitem{ABR}
S. Axler, P. Bourdon, W. Ramey,
Harmonic Function Theory,
2\textit{nd} ed., Grad. Texts in Math., vol. 137, Springer, New York, 2001.
%
\bibitem{BMS}
Yu. A. Brychkov, O.I. Marichev, N. V. Savischenko,
Tables of Mellin Transforms,
CRC Press, Boca Raton, 2019.
%
\bibitem{CR}
R. R. Coifman, R. Rochberg,
Representation theorems for holomorphic and harmonic functions in $L^p$,
Ast\'erisque \textbf{77} (1980) 12--66.
%
\bibitem{DS}
A. E. Djrbashian, F. A. Shamoian,
Topics in the Theory of $A^p_\alpha$ Spaces,
Teubner Texts in Mathematics, 105, BSB B. G. Teubner Verlagsgesellschaft,
Leipzig, 1988.
%
\bibitem{E}
A. Erdelyi,
Integraldarstellungen hyper-geometrischer funktionen,
Quart. J. Math. \textbf{8} (1937) 267--277.
%
\bibitem{EMOT}
A. Erdelyi, W. Magnus, F. Oberhettinger F.G. Tricomi,
Tables of Integral Transforms, Vol. II,
McGraw-Hill Book Company, New York, 1954.
%
\bibitem{GKU}
S. Gerg\"{u}n, H. T. Kaptano\u glu, A. E. \"Ureyen,
Harmonic Besov spaces on the ball,
Int. J. Math. \textbf{27} (2016) no.9 1650070, 59 pp.
%
\bibitem{GJ}
S. Grellier, P. Jamming,
Harmonic functions on the real hyperbolic ball II.
Hardy-Sobolev and Lipschitz spaces,
Math. Nachr. \textbf{268} (2004) 50--73.
%
\bibitem{HKZ}
H. Hedenmalm, B. Korenblum, K. Zhu,
Theory of Bergman Spaces,
Grad. Texts in Math., vol. 199, Springer, New York, 2000.
%
\bibitem{Ja1}
P. Jaming,
Trois probl\'emes d'analyse harmonique,
PhD thesis, Universit\'e d'Orl\'eans, 1998.
%
\bibitem{Ja2}
P. Jaming,
Harmonic functions on the real hyperbolic ball I.
Boundary values and atomic decomposition of Hardy spaces,
Colloq. Math. \textbf{80} (1999) 63--82.
%
\bibitem{JP1}
M. Jevti\'c, M. Pavlovi\'c,
Harmonic Bergman functions on the unit ball in $\mathbb R^n$,
Acta Math. Hungar. \textbf{85} (1999) 81--96.
%
\bibitem{JP2}
M. Jevti\'c, M. Pavlovi\'c,
Series expansion and reproducing kernels for hyperharmonic functions,
J. Math. Anal. Appl. \textbf{264} (2001) 673--681.
%
\bibitem{K}
S. G. Krantz,
Calculation and estimation of the Poisson kernel,
J. Math. Anal. Appl. \textbf{302} (2005) 143--148.
%
\bibitem{Le}
N. N. Lebedev,
Special Functions and Their Applications,
Dover Publications, New York, 1972.
%
\bibitem{Li}
C. Liu,
Sharp Forelli-Rudin estimates and the norm of the Bergman projection,
J. Funct. Anal. \textbf{268} (2015) no.2 255--277.
%
\bibitem{LS}
C. W. Liu, J. H. Shi,
Invariant mean-value property and $\mathcal M$-harmonicity in the
unit ball of $\mathbb R^n$,
Acta Math. Sin. \textbf{19} (2003) 187--200.
%
\bibitem{LSR}
C. Liu, J. Shi, G. Ren,
Duality for harmonic mixed-norm spaces in the unit ball of $\mathbb R^n$,
Ann. Sci. Math. Qu\'ebec \textbf{25} (2001) 179--197.
%
\bibitem{Mia}
J. Miao,
Reproducing kernels for harmonic Bergman spaces of the unit ball,
Monatsh. Math. \textbf{125} (1998) 25--35.
%
\bibitem{Min}
K. Minemura,
Harmonic functions on real hyperbolic spaces,
Hiroshima Math. J. \textbf{3} (1973) 121--151.
%
\bibitem{O}
F. W. J. Olver,
Asymptotics and Special Functions,
A K Peters, Wellesley, MA, 1997.
%
\bibitem{OLBC}
F. W. J. Olver, D. W. Lozier, R. F. Boisvert, C. W. Clark, Eds.,
NIST Handbook of Mathematical Functions,
Cambridge University Press, New York, 2010.
%
\bibitem{Pen}
M. P. S. Pe\~{n}alosa,
Espacios de Bergman de funciones arm\'{o}nicas en la bola hiperb\'{o}lica,
Tesis de Doctorado, Posgrado en Ciencias Matem\'{a}ticas, Universidad
Nacional Aut\'{o}noma de M\'{e}xico, 2005.
%
\bibitem{Per}
S. P\'erez-Esteva,
Duality on vector-valued weighted harmonic Bergman spaces,
Studia Math. \textbf{118} (1996) 37--47.
%
\bibitem{Ra}
E.D. Rainville,
Special Functions,
The MacMillan Company, New York, 1960.
%
\bibitem{Ru}
W. Rudin,
Function Theory in the Unit Ball of $\mathbb C^n$,
Grundlehren Math. Wiss., vol. 241, Springer, New York, 1980.
%
\bibitem{St1}
M. Stoll,
Harmonic and Subharmonic Function Theory on the Hyperbolic Ball,
London Math. Soc. Lect. Note Series, vol. 431,
Cambridge University Press, Cambridge, 2016.
%
\bibitem{St2}
M. Stoll,
Reproducing kernels and radial eigenfunctions for the hyperbolic Laplacian,
preprint, available at https://www.researchgate.net/publication/304998931
%
\bibitem{St3}
M. Stoll,
The reproducing kernel of $\mathcal{H}^2$ and radial eigenfunctions
of the hyperbolic Laplacian,
Math. Scand. \textbf{124} (2019) 81--101.
\end{thebibliography}

\end{document}